\documentclass[12pt,reqno]{amsart}
\usepackage{}

\usepackage{amsmath}
\allowdisplaybreaks
\usepackage{amsfonts}
\usepackage{amssymb}
\usepackage[all]{xy}           

\usepackage{bbding}
\usepackage{txfonts}
\usepackage{amscd}

\usepackage[shortlabels]{enumitem}
\usepackage{ifpdf}
\ifpdf
  \usepackage[colorlinks,final,backref=page,hyperindex]{hyperref}
\else
  \usepackage[colorlinks,final,backref=page,hyperindex,hypertex]{hyperref}
\fi
\usepackage{tikz}
\usepackage[active]{srcltx}

\makeatletter

\newtheorem{df}{Definition}[section]
\newtheorem{thm}{Theorem}[section]
\newtheorem{cor}{Corollary}[section]
\newtheorem{rem}{Remark}[section]

\newtheorem{prop}{Proposition}[section]
\newtheorem{exa}{Example}[section]
\newtheorem{lem}{Lemma}[section]

\numberwithin{equation}{section}

\begin{document}

\date{}
\title
{On noncommutative Hom-anti-pre-Poisson superalgebras and related structures}

\author{W. Ben Abdelhafidh and O. Ncib }

\address{University of Sfax, Faculty of Sciences of Sfax,   BP 1171, 3000 Sfax, Tunisia}

\email{wiembenabdelhafidh@gmail.com}

\address{University of Gafsa, Faculty of Sciences Gafsa, 2112 Gafsa, Tunisia}

\email{othmenncib@yahoo.fr, othmen.ncib@fsgf.u-gafsa.tn}



\begin{abstract}

In this paper, we develop the theory of several Hom-type anti-algebraic structures in the $\mathbb{Z}_2$-graded setting. We introduce and study Hom-anti-associative superalgebras, Hom-anti-dendriform superalgebras, and Hom-anti-pre-Lie superalgebras, which arise as Hom-type generalizations of the corresponding anti-algebraic structures. Various constructions and examples are provided, together with structural properties and representation-theoretic aspects. In particular, we investigate the role of anti-super-$\mathcal{O}$-operators and anti-Rota-Baxter operators in the construction of Hom-anti-dendriform superalgebras. Furthermore, we introduce the notion of noncommutative Hom-pre-Poisson superalgebras and noncommutative Hom-anti-pre-Poisson superalgebras as Hom-type extensions of noncommutative Poisson-type structures in the graded framework. These structures naturally combine Hom-anti-pre-Lie and Hom-anti-dendriform superalgebras through suitable compatibility conditions. Several relationships between the introduced structures are established, providing a unified framework for studying twisted and graded generalizations of noncommutative Poisson-type algebras.
\end{abstract}

\subjclass[2020]{ 17A30, 17A36, 17A42, 17B10, 17B60, 17B63, 17D99.}

\keywords{ Hom-anti-associative superalgebra, Hom-anti-pre-Lie superalgebra, Hom-anti-dendriform superalgebra, noncommutative Hom-anti-pre-Poisson superalgebra, anti-super-$\mathcal{O}$-operator.}



\renewcommand{\thefootnote}{\fnsymbol{footnote}}
\footnote[0]{ Corresponding author (Othmen Ncib): othmenncib@yahoo.fr, othmen.ncib@fsgf.u-gafsa.tn}

 \maketitle{}
\tableofcontents

\section{Introduction}


The notion of Hom-associative algebra, introduced in \cite{Makhlouf-Silvestrov1}, is a generalization of associative algebras, it corresponds to the notion of Hom-Lie algebras, which appeared in physics, in the sense that the commutator
of a Hom-associative algebra gives a Hom-Lie algebra.
It was originally introduced by Makhlouf and Silvestrov in \cite{Makhlouf-Silvestrov2}. Hom-Lie algebras and more general quasi-Hom-Lie algebras were introduced first
by Hartwig, Larsson and Silvestrov in \cite{Hartwig-Larsson-Silvestrov} where a general approach to discretization
of Lie algebras of vector fields using general twisted derivations ($\sigma$-derivations) and a
general method for construction of deformations of Witt and Virasoro type algebras
based on twisted derivations have been developed. The Hom-alternative
algebras were introduced by A. Makhlouf in \cite{Makhlouf}. Several applications for this notion are given in \cite{Hamdedi-Makhlouf1,Hamdedi-Makhlouf2}. In these works, the authors established the foundations of the formal deformation theory and the connection with cohomology for (Hom)-alternative algebras and  used a composition construction to produce deformations of left alternative algebras into left Hom-alternative algebras. The results were also  extended  to Hom-Malcev algebras. In \cite{Yau}, the authors introduced the Hom-Jordan algebras as twisted generalizations of Jordan algebras.\\

Pre-Lie algebras (called also left-symmetric algebras) were first mentioned by Cayley in $1890$ \cite{Cayley} as a kind of rooted tree algebra and later
arose again from the study of convex homogeneous cones \cite{E.B. Vinberg}, affine manifold and affine structures on
Lie groups \cite{Koszul}, and deformation of associative algebras \cite{Gerstenhaber}. They also appeared in many
fields in mathematics and mathematical physics, such as complex and symplectic structures on Lie groups and Lie algebras \cite{And-Sal, B.Y.Chu,Dardié-Medina1,Dardié-Medina2,Lichnerowicz-Medina}, phases spaces of Lie algebras \cite{Bai,B.A.Kupershmidt},
integrable systems \cite{M.Bordemann}, classical and quantum Yang-Baxter equations \cite{Diatta-Medina}, combinatorics
\cite{Ebrahimi-Fard}, Poisson brackets and infinite dimensional Lie algebras, vertex algebras, quantum field
theory \cite{Connes-Kreimer}, and operads \cite{Chapoton-Livernet}. Hom-pre-Lie algebras are a twisted analogue of pre-Lie algebras, where the pre-Lie algebra identity is twisted by a self linear map, called the structure map.
This notion was introduced in \cite{Makhlouf-Silvestrov1}.  Recently, Hom-pre-Lie algebras have been studied from
several aspects: The geometrization of Hom-pre-Lie algebras was studied in \cite{Zhang-Yu-Wang}, universal $\alpha$-central extensions of Hom-pre-Lie algebras were studied in \cite{Sun-Chen-Zhou} and the bialgebra theory of Hom-pre-Lie algebras
was studied in \cite{Sun-Li}.

The super-case (or $\mathbb{Z}_2$-graded analogue) of pre-Lie algebras, known as pre-Lie superalgebras, first emerged in the study of cohomological structures of associative algebras \cite{Chapoton-Livernet}. These algebraic structures exhibit deep and significant connections with various areas of mathematics and mathematical physics. In particular, they play an important role in super-symplectic geometry, vertex superalgebras, and graded versions of the classical Yang-Baxter equation. For further developments and detailed discussions, we refer the reader to \cite{Kong-Bai,
Kong-Chen-Bai}. Hom-pre-Lie algebras are closely related to Hom-Lie algebras through a natural construction. More precisely, every Hom-pre-Lie algebra $(\mathcal{A},\circ,\alpha)$ induces a Hom-Lie algebra $(\mathcal{A},[\cdot,\cdot]^C,\alpha)$, where the bracket is defined by the commutator
\[
[x,y]^C = x \circ y - y \circ x,\quad \forall x,y \in \mathcal{A}.
\]
This Hom-Lie algebra is referred to as the subadjacent Hom-Lie algebra associated with $(\mathcal{A},\circ,\alpha)$ and is commonly denoted by $\mathcal{A}^C$. Hom-pre-Lie algebras play a fundamental role in the structural theory of Hom-Lie algebras, particularly in the study of their representations, deformations, and related algebraic constructions. Hom-pre-Lie superalgebras are a natural $\mathbb{Z}_2$-graded generalization of Hom-pre-Lie algebras obtained by equipping a superspace with a bilinear product and a twisting linear map. They can be viewed as Hom-type deformations of pre-Lie superalgebras, in which the defining identity is twisted by a linear self-map. These structures provide an appropriate framework for studying Hom-type generalizations of Lie superalgebras, since the super-commutator associated with a Hom-pre-Lie superalgebra naturally induces a Hom-Lie superalgebra, called the subadjacent Hom-Lie superalgebra. Moreover, Hom-pre-Lie superalgebras play an important role in the representation theory, cohomology, and deformation theory of Hom-Lie superalgebras. They also appear naturally in the construction of Hom-Lie superalgebras via $\mathcal{O}$-operators and Rota-Baxter operators (see, for example, \cite{O-N1}).\\

Anti-pre-Lie algebras, which constitute an anti-version of pre-Lie algebras, were first introduced by G. Liu and C. Bai \cite{Liu-Bai1}. Subsequently, the superalgebraic extension of these structures was developed by Zhao Chen, Shanshan Liu, and Liangyun Chen in \cite{Zhao-Liu-Chen}. An anti-pre-Lie superalgebra is a $\mathbb{Z}_2$-graded vector space $\mathcal{A}=\mathcal{A}_{\bar{0}} \oplus \mathcal{A}_{\bar{1}}$ equipped with an even bilinear product 
$\circ : \mathcal{A} \times \mathcal{A} \to \mathcal{A}$ satisfying, for all homogeneous elements $x,y,z \in \mathcal{A}$,
\begin{equation}\label{def-anti-pre-Lie superalgebras1}
x\circ(y\circ z)-(-1)^{|x||y|}y\circ(x\circ z)=(-1)^{|x||y|}[y,x]\circ z,
\end{equation}
\begin{equation}\label{def-anti-pre-Lie superalgebras2}
[x,y]_\circ\circ z+(-1)^{|x|(|y|+|z|)}[y,z]_\circ\circ x+(-1)^{|z|(|x|+|y|)}[z,x]_\circ\circ y=0,
\end{equation}
where
\begin{equation}\label{def-anti-pre-Lie superalgebras3}
[x,y]_\circ=x\circ y-(-1)^{|x||y|}y\circ x.
\end{equation}
 This identity is called the anti-pre-Lie super-identity. Moreover, the super-commutator $[x,y]_\circ$ 
defines a Lie superalgebra structure on $\mathcal{A}$. In this paper we investigate the Hom-type generalization of this structure which is Hom-anti-pre-Lie superalgebra. \\

The concept of dendriform algebras was introduced by Loday in the context of algebraic $K$-theory \cite{Loday1}. Since their introduction, dendriform algebras have played a fundamental role in various areas of mathematics and mathematical physics, including arithmetic \cite{Loday2}, combinatorics \cite{Loday3}, Hopf algebras \cite{Chap, Holtkamp1, Holtkamp2, Loday4, Ronco}, homological algebra \cite{Frabetti1, Frabetti2}, operad theory \cite{Loday5}, Lie and Leibniz algebras \cite{Frabetti2}, and quantum field theory \cite{Foissy}. One of the essential structural properties of a dendriform algebra $(\mathcal A,\succ,\prec)$ is that the sum of its two binary operations,
\[
x\cdot y := x\succ y + x\prec y,\quad \forall x,y\in \mathcal A,
\]
defines an associative algebra structure on $A$. This property is commonly interpreted as a splitting of the associativity condition into two finer compatibility relations.

Furthermore, dendriform algebras are closely related to pre-Lie algebras, which form an important class of Lie-admissible algebras whose commutators naturally give rise to Lie algebras. These structures appear in many areas of mathematics and physics (see \cite{Bai1, Burde} and references therein). More precisely, for any dendriform algebra $(\mathcal A,\prec,\succ)$, the bilinear operation defined by
\begin{equation}\label{eq:pre}
x \circ y = x\prec y - y\succ x,\quad \forall x,y\in \mathcal A,
\end{equation}
endows $\mathcal A$ with a pre-Lie algebra structure $(\mathcal A,\circ)$, called the associated pre-Lie algebra of $(\mathcal A,\succ,\prec)$. Consequently, dendriform algebras provide a natural framework connecting associative algebras, pre-Lie algebras, and Lie algebras. This relationship can be summarized by the following commutative diagram of categories \cite{chap1}:
\begin{equation}\label{eq:dendri}
\begin{matrix}
{\rm dendriform\quad algebras} & \longrightarrow & {\rm pre\text{-}Lie\quad algebras} \\
\downarrow & & \downarrow \\
{\rm associative\quad algebras} & \longrightarrow & {\rm Lie\quad algebras}.
\end{matrix}
\end{equation}

Motivated by the study of new splittings of associative products, Dongfang Gao, Guilai Liu and Chengming Bai introduced the notion of anti-dendriform algebras (see \cite{Dongfang-Liu-Bai} for more details). An anti-dendriform algebra is a vector space endowed with two binary operations whose sum defines an associative product and whose multiplication operators satisfy specific compatibility conditions reflecting an 'anti' version of the classical dendriform relations. This
construction provides a new approach to decomposing algebraic operations.
Moreover, anti-dendriform algebras are closely related to operator forms such as anti-$\mathcal O$-operators and
anti-Rota-Baxter operators. They also exhibit strong connections with Novikov-type structures, which leads to new insights into the theory of Novikov algebras and their generalizations. These
developments highlight the importance of anti-dendriform algebras in the study of new algebraic
splittings and related algebraic structures. 
The aim of this paper is to introduce the notion of Hom-anti-dendriform superalgebras, which can be viewed as a Hom-type graded generalization of anti-dendriform algebras. We study their basic properties and investigate their relationships with other algebraic structures, providing new examples and constructions arising from suitable operator forms. The classical relationships between dendriform, pre-Lie, associative, and Lie algebras, summarized in diagram \eqref{eq:dendri}, can be extended to the anti-dendriform setting in the Hom-super framework. Indeed, by introducing the Hom-anti-dendriform and Hom-anti-pre-Lie superalgebras, one obtains a natural generalization where the ``anti'' versions replace the classical structures, and the Hom-twist accommodates graded and deformation effects.

The extended diagram can be displayed as follows:

\begin{equation}\label{eq:antiHomSuper}
\begin{matrix}
{\rm Hom\text{-}anti\text{-}dendriform\ superalgebras} & \longrightarrow & {\rm Hom\text{-}anti\text{-}pre\text{-}Lie\ superalgebras} \\
\downarrow & & \downarrow \\
{\rm Hom\text{-}associative\ superalgebras} & \longrightarrow & {\rm Hom\text{-}Lie\ superalgebras}.
\end{matrix}
\end{equation}

Here, the horizontal arrows correspond to constructions associating anti-pre-Lie superalgebras to anti-dendriform superalgebras, while the vertical arrows reflect the summation of the defining operations, generalizing the classical splittings to the Hom-super setting.\\

Noncommutative Poisson algebras were introduced by Xu \cite{Xu} as a natural noncommutative generalization of classical Poisson algebras, where the associative product is not required to be commutative but remains compatible with a Lie bracket through the Leibniz rule. Their $\mathbb{Z}_2$-graded analogue, called noncommutative Poisson superalgebras, extends this structure to the super setting and appears naturally in the study of graded noncommutative geometry and mathematical physics.

As a refinement of Poisson-type structures, pre-Poisson algebras were introduced by Aguiar \cite{Aguiar}. In the graded context, noncommutative pre-Poisson superalgebras provide a splitting of the identities of noncommutative Poisson superalgebras through compatible pre-Lie type operations. 

Motivated by the theory of Hom-algebras initiated by Hartwig, Larsson and Silvestrov \cite{Hartwig-Larsson-Silvestrov}, Hom-type analogues of these structures have been investigated by twisting the defining identities via a linear map. In particular, Hom-noncommutative Poisson superalgebras and Hom-noncommutative pre-Poisson superalgebras provide a natural framework for studying deformations and twisted symmetries of graded noncommutative Poisson structures.

In this paper, we introduce the notion of noncommutative Hom-anti-pre-Poisson superalgebras, which can be viewed as a natural generalization of both Hom-anti-pre-Lie superalgebras and Hom-anti-dendriform superalgebras. These structures provide a unified algebraic framework that captures the interaction between noncommutative operations and Hom-type twisting maps. More precisely, a noncommutative Hom-anti-pre-Poisson superalgebra $(\mathcal{A}, \circ, \prec, \succ, \alpha)$ consists of a Hom-anti-pre-Lie superalgebra $(\mathcal{A}, \circ, \alpha)$ together with a Hom-anti-dendriform superalgebra $(\mathcal{A}, \prec, \succ, \alpha)$ defined on the same underlying superspace, such that certain compatibility conditions between the operations $\circ$, $\prec$ and $\succ$ are satisfied.
 
This construction allows the extension of classical anti-pre-Poisson algebras into the Hom-type, noncommutative, and superalgebraic context, providing a rich framework for further study in deformation theory, Hom-algebra cohomology, and applications in mathematical physics where graded and twisted structures naturally appear. In particular, it highlights how Hom-anti-pre-Lie superalgebras and Hom-anti-dendriform superalgebras are interwoven within a single coherent algebraic system.\\
 \subsection*{Organization} The paper is organized as follows:\\
 
  In Section \ref{Sec2}, we present the necessary preliminaries and fundamental results that will be used throughout the paper, including Hom-associative superalgebras, Hom-Lie superalgebras, and noncommutative Hom-Poisson superalgebras. Moreover, we introduce the concvepts of Hom-dendriform superalgebras and noncommutative Hom-pre-Poisson superalgebras, accompanied by examples and several related properties. Section \ref{Sec3} is devoted to the introduction of several Hom-type anti-structures in the super setting. In particular, we develop the theory of Hom-anti-associative superalgebras, Hom-anti-dendriform superalgebras, and Hom-anti-pre-Lie superalgebras, together with illustrative examples and constructions arising from anti-super-$\mathcal{O}$-operators. Finally, in Section \ref{Sec4}, we introduce the notion of noncommutative Hom-pre-Poisson superalgebras and noncommutative Hom-anti-pre-Poisson superalgebras, and investigate the compatibility relations between the underlying operations, highlighting their connections with the previously developed Hom-anti-algebraic structures.\\

Throughout this paper, unless otherwise specified, all vector spaces are assumed to be
finite-dimensional over a field $\mathbb{F}$ of characteristic 0.

\section{Basics on noncommutative Hom-(pre-)Poisson superalgebras}\label{Sec2}
In this section, we briefly review Hom-associative and Hom-dendriform superalgebras, as well as Hom-Lie superalgebras and their representations. We also present the definitions of noncommutative Hom-Poisson superalgebras and noncommutative Hom-pre-Poisson superalgebras, together with some examples and fundamental properties. These structures provide the algebraic framework for the results developed in the subsequent sections.
\subsection{Hom-associative and Hom-dendriform superalgebras}
\begin{df}(\cite{Faouzi-Abdenacer})
A \textbf{Hom-associative superalgebra} is a triplet $(\mathcal{A},\mu,\alpha)$ consisting of a $\mathbb{Z}_2$-graded vector space $\mathcal{A}$, an even linear map
 $\alpha:\mathcal{A}\rightarrow\mathcal{A}$, (i.e., $\alpha(\mathcal{A}_i)\subseteq\mathcal{A}_i)$ and an even bilinear map
 $\mu:\mathcal{A} \times \mathcal{A}\rightarrow\mathcal{A}$, (i.e., $\mu(\mathcal{A}_i,\mathcal{A}_j)\subseteq\mathcal{A}_{i+j})$ satisfying the following condition
 for all $x,y,z\in\mathcal{A}$:
 $$ass_{\mu}^\alpha(x,y,z)=0\;(\text{Hom-associativity}),$$
 where $ass_{\mu}^\alpha(x,y,z)=\mu(\mu(x,y),\alpha(z))-\mu(\alpha(x),\mu(y,z)),\;\forall x,y,z\in\mathcal{A}.$
\end{df}
If in addition $\mu$ is super-commutative (i.e., $\mu(x,y)=(-1)^{|x||y|}\mu(y,x)\;\forall\;x,y\in\mathcal{H}(\mathcal{A})$),
the Hom-associative superalgebra $(\mathcal{A},\mu,\alpha)$ is said to be commutative Hom-associative superalgebra.
\begin{rem}
We recover the classical associative superalgebra when $\alpha = id$.
\end{rem}

\begin{exa}(\cite{O-N})
Let $\mathcal{A}=\mathcal{A}_{\overline{0}}\oplus \mathcal{A}_{\overline{1}}$ be a $3$-dimensional $\mathbb{Z}_2$-graded vector space, where $\mathcal{A}_{\overline{0}}=<e_1,e_2>$
 and $\mathcal{A}_{\overline{1}}=<e_3>$. The nonzero products on the basis of $\mathcal A$ given by
 $$\mu(e_1,e_2)=-e_1,\;\mu(e_2,e_2)=e_1+e_2,$$
 and the even linear map $\alpha:\mathcal{A}\rightarrow\mathcal{A}$ defined on the basis of $\mathcal{A}$ by
 $$\alpha(e_1)=-e_1,\;\alpha(e_2)=e_1+e_2\;\text{and}\;\alpha(e_3)=0,$$ defined on $\mathcal{A}$ a structure of Hom-associative superalgebra.
\end{exa}
\begin{df}
 A \textbf{representation} of a Hom-associative superalgebra $(\mathcal A, \mu, \alpha)$  is a quadruplet $(V,\mathfrak l,\mathfrak r,\beta)$ where $V$ is a $\mathbb Z_2$-graded vector space, $\beta\in End_{\mathbb F}(V)$ and $\mathfrak l,\mathfrak r: \mathcal A\rightarrow End_{\mathbb F}(V)$ are three even linear maps
such that the following conditions hold for all $x, y \in\mathcal A:$
\begin{eqnarray}
\beta\mathfrak l(x)=\mathfrak l(\alpha(x))\beta\;\;&\text{and}&\;\;
 \beta\mathfrak r(x)=\mathfrak r(\alpha(x))\beta\label{rAs1}\\
 \mathfrak l(\mu(x, y))\beta&=&\mathfrak l(\alpha(x))\mathfrak l(y)\label{rAs2}\\
 \mathfrak r(\mu(x, y))\beta&=&(-1)^{|x||y|}\mathfrak r(\alpha(y))\mathfrak r(x)\label{rAs3}\\
 \mathfrak l(\alpha(x))\mathfrak r(y)&=&(-1)^{|x||y|}\mathfrak r(\alpha(y))\mathfrak l(x)\label{rAs4}
\end{eqnarray}   
In particular, $(\mathcal A, \mathfrak L,\mathfrak R, \alpha)$ is a representation over $(\mathcal A,\mu,\alpha)$ which is called the adjoint representation, where $\mathfrak L,\mathcal R: \mathcal A\to End_\mathbb F(\mathcal A)$ are two even linear maps defined by 
\begin{equation}\label{adj-rep-H-ass-sup}
\mathfrak L(x)(y)=\mathcal R(y)(x)=\mu(x,y),\; \forall x,y\in\mathcal A.
\end{equation}
\end{df}
Suppose that $(\mathcal{A},\mu,\alpha)$ is a Hom-associative superalgebra.
Let $V$ be a $\mathbb{Z}_2$-graded vector space, $\mathfrak l,\mathfrak r:\mathcal A\to End_{\mathbb F}(V)$ and $\beta\in End_{\mathbb F}(V)$ are three even linear maps. Then $(V,\mathfrak l,\mathfrak r,\beta)$ is a representation of $(\mathcal{A},\mu,\alpha)$ if and only if $(\mathcal{A}\oplus V,\mu_{\mathcal{A}\oplus V},\alpha+\beta)$
is a Hom-associative superalgebra, where $\mu_{\mathcal{A}\oplus V}$ and $(\alpha+\beta)$
are defined for all $x,y\in\mathcal{H}(\mathcal{A}),\;u,v \in\mathcal{H}(V)$ by
\begin{eqnarray}
    \mu_{\mathcal{A}\oplus V}(x+u,y+v)&=&\mu(x,y)+\mathfrak l(x)v+(-1)^{|y||u|}\mathfrak r(y)u,\label{Hom-ass-direct-sum1}\\
    (\alpha+\beta)(x+u)&=&\alpha(x)+\beta(u)\label{Hom-ass-direct-sum2}.
\end{eqnarray}
This Hom-associative superalgebra is called semi-direct product of $(\mathcal{A},\mu,\alpha)$ and $(V,\mathfrak l,\mathfrak r,\beta)$ and denoted by
 $\mathcal{A}\ltimes^{\alpha}_{\mathfrak l,\mathfrak r,\beta}V$ or simply $\mathcal{A}\ltimes_{\mathfrak l,\mathfrak r} V$.\\

\begin{df}\label{Hom-dend-superalg}
 A \textbf{Hom-dendriform superalgebra} is a quadruplet $(\mathcal A,\prec,\succ,\alpha)$ consisting of a $\mathbb Z_2$-graded vector space $\mathcal A$ on which the operations  $\prec,\succ:\mathcal A\otimes\mathcal A\to\mathcal A$ and $\alpha:\mathcal A\to\mathcal A$ are even linear maps satisfying: 
 \begin{align}
(x\prec y)\prec \alpha(z)&=\alpha(x)\prec(y\prec z+y\succ z),\label{cond-Hom-dend1}\\
(x\succ y)\prec\alpha(z)&=\alpha(x)\succ(y\prec z),\label{cond-Hom-dend2}\\
\alpha(x)\succ(y\succ z)&=(x\prec y+x\succ y)\succ\alpha(z),\label{cond-Hom-dend3}
 \end{align}
 for any $x,y,z\in\mathcal H(\mathcal A)$.\\
 We recover the classical dendriform superalgebra when $\alpha=id$. A Hom-dendriform superalgebra $(\mathcal A,\prec,\succ,\alpha)$ is called multiplicative if $\alpha$ is an algebra endomorphism that is $$\alpha(x\prec y)=\alpha(x)\prec\alpha(y)\;\;\text{and}\;\;\alpha(x\succ y)=\alpha(x)\succ\alpha(y),\;\forall x,y\in\mathcal H(\mathcal A).$$
\end{df}
\begin{exa}
Let $\mathcal{A} = \mathcal{A}_{\overline{0}} \oplus \mathcal{A}_{\overline{1}}$ be a $3$-dimensional superspace, where
$\mathcal{A}_{\overline{0}}=<e_0>$ and $\mathcal{A}_{\overline{1}}=<e_1,e_2>$. We define on the basis of $\mathcal{A}$, the following linear map
$$\alpha(e_0)=e_0\;,\;\alpha(e_1)=-e_1\;,\;\alpha(e_2)=-e_2,$$
and the following nonzero products
$$ e_2\succ e_2 =-e_0\;,\; e_0\succ e_2=\frac{1}{4}e_1\;,\; e_2\succ e_0=-\frac{1}{4}e_1$$
$$e_2\prec e_2 =2e_0\;,\; e_0\prec e_2=\frac{1}{2}e_1\;,\; e_2\prec e_0=e_1 $$
Then the quadruplett $(\mathcal{A},\prec,\succ,\alpha)$ is a Hom-dendriform superalgebra.
\end{exa}
\begin{prop}\label{Hom-dend-to-Hom-ss}
 Let $(\mathcal A,\prec,\succ,\alpha)$ be a Hom-dendriform superalgebra. Let $\star:\mathcal A\otimes\mathcal A \to\mathcal A$ be a linear map
defined for $x, y \in\mathcal A$ by
\begin{equation}\label{comp-hom-ass-of-Hom-dend}
x\star y=x\prec y+x\succ y.    
\end{equation}
Then $(\mathcal A,\star,\alpha)$ is a Hom-associative superalgebra.
\end{prop}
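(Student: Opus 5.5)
The plan is a direct verification: parity, then Hom-associativity, obtained by summing the three Hom-dendriform axioms \eqref{cond-Hom-dend1}--\eqref{cond-Hom-dend3}.

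For parity, $\star$ is even because both $\prec$ and $\succ$ are even bilinear maps. So $\star(\mathcal A_i,\mathcal A_j)\subseteq\mathcal A_{i+j}$, and $\alpha$ is even by hypothesis.

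For Hom-associativity, fix homogeneous $x,y,z\in\mathcal H(\mathcal A)$. Expand $ass^\alpha_\star(x,y,z)=(x\star y)\star\alpha(z)-\alpha(x)\star(y\star z)$ by bilinearity. The left term becomes
\[
(x\prec y)\prec\alpha(z)+(x\succ y)\prec\alpha(z)+(x\star y)\succ\alpha(z),
\]
where the first two terms split the $\prec$-part and $x\star y$ is kept intact in the $\succ$-part. The right term becomes
\[
\alpha(x)\prec(y\star z)+\alpha(x)\succ(y\prec z)+\alpha(x)\succ(y\succ z),
\]
where $y\star z$ is kept intact in the $\prec$-part and the $\succ$-part is split.

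Now match the terms pairwise. By \eqref{cond-Hom-dend1}, $(x\prec y)\prec\alpha(z)=\alpha(x)\prec(y\star z)$. By \eqref{cond-Hom-dend2}, $(x\succ y)\prec\alpha(z)=\alpha(x)\succ(y\prec z)$. By \eqref{cond-Hom-dend3}, $(x\star y)\succ\alpha(z)=\alpha(x)\succ(y\succ z)$. Subtracting shows $ass^\alpha_\star(x,y,z)=0$ for homogeneous elements, and bilinearity extends this to all of $\mathcal A$.

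No sign factors $(-1)^{|x||y|}$ appear, since neither the axioms nor the Hom-associativity identity permute elements. The only point needing care is grouping the expanded terms so that each matches one axiom exactly; there is no real obstacle.
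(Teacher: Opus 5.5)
Your proof is correct and takes essentially the same approach as the paper: expand both sides of the Hom-associativity identity and match the resulting terms one-to-one via \eqref{cond-Hom-dend1}--\eqref{cond-Hom-dend3}. The paper writes this as a single chain of equalities starting from $\alpha(x)\star(y\star z)$, but the term grouping is the same as yours.
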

\begin{proof}Let $x,y,z\in\mathcal H(\mathcal A)$, we have
\begin{align*}
\alpha (x)\star (y\star z)&= \alpha (x)\star (y\prec z+ y \succ z),\\
\ &= \alpha (x)\prec (y\prec z+ y \succ z)+\alpha (x)\succ (y\prec z+ y \succ z),\\
\ &= (x\prec y)\prec \alpha (z)+\alpha (x)\succ (y\prec z)+\alpha (x)\succ( y \succ z),\\
\ &= (x\prec y)\prec \alpha (z)+(x\succ y)\prec \alpha (z)+(x \prec y+x\succ y) \succ \alpha (z),\\
\ &= (x\prec y+x\succ y)\prec \alpha (z)+(x \prec y+x\succ y) \succ \alpha (z),\\
\ &= (x\star y)\prec \alpha (z)+(x\star y) \succ \alpha (z),\\
\ &= (x\star y)\star \alpha (z).
\end{align*}

\end{proof}
\begin{thm}\label{dend-twist-thm}
Let $(\mathcal A,\prec,\succ)$ be a dendriform superalgebra and $\alpha:\mathcal A\to\mathcal A$ be an even algebra endomorphism. Then, $(\mathcal A,\prec_\alpha,\succ_\alpha,\alpha)$ where $\prec_\alpha,\succ_\alpha:\mathcal A\otimes\mathcal A\to\mathcal A$ are defined by:
\begin{align}
 x\prec_\alpha y&=\alpha(x)\prec\alpha(y),\label{con-twist-dendr1}\\
 x\succ_\alpha y&=\alpha(x)\succ\alpha(y).
\end{align}
is a multiplicative Hom-dendriform superalgebra.    
\end{thm}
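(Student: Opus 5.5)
The plan is a direct verification of the three axioms \eqref{cond-Hom-dend1}--\eqref{cond-Hom-dend3} for $(\prec_\alpha,\succ_\alpha,\alpha)$, followed by a check of multiplicativity. Throughout I use only that $\alpha$ is even and an algebra endomorphism of the dendriform superalgebra, that is, $\alpha(x\prec y)=\alpha(x)\prec\alpha(y)$ and $\alpha(x\succ y)=\alpha(x)\succ\alpha(y)$ for homogeneous $x,y$. Evenness of $\alpha$ guarantees that $\prec_\alpha$ and $\succ_\alpha$ are even bilinear maps. Since the defining identities of a dendriform superalgebra (the case $\alpha=\mathrm{id}$ of Definition \ref{Hom-dend-superalg}) carry no sign factors, no Koszul signs will appear anywhere in the computation.

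For \eqref{cond-Hom-dend1}, I expand the left-hand side:
\begin{align*}
(x\prec_\alpha y)\prec_\alpha\alpha(z)&=\alpha\bigl(\alpha(x)\prec\alpha(y)\bigr)\prec\alpha^2(z)\\
&=\bigl(\alpha^2(x)\prec\alpha^2(y)\bigr)\prec\alpha^2(z).
\end{align*}
The right-hand side is handled the same way:
\begin{align*}
\alpha(x)\prec_\alpha\bigl(y\prec_\alpha z+y\succ_\alpha z\bigr)&=\alpha^2(x)\prec\alpha\bigl(\alpha(y)\prec\alpha(z)+\alpha(y)\succ\alpha(z)\bigr)\\
&=\alpha^2(x)\prec\bigl(\alpha^2(y)\prec\alpha^2(z)+\alpha^2(y)\succ\alpha^2(z)\bigr).
\end{align*}
These two expressions agree by the first dendriform axiom $(a\prec b)\prec c=a\prec(b\prec c+b\succ c)$, applied to $a=\alpha^2(x)$, $b=\alpha^2(y)$, $c=\alpha^2(z)$.

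The other two axioms follow identically. Both sides of \eqref{cond-Hom-dend2} reduce to the untwisted relation $(a\succ b)\prec c=a\succ(b\prec c)$ evaluated at $(\alpha^2(x),\alpha^2(y),\alpha^2(z))$. Both sides of \eqref{cond-Hom-dend3} reduce to $a\succ(b\succ c)=(a\prec b+a\succ b)\succ c$ at the same arguments. The only step requiring any care is pushing $\alpha$ through the inner product via the endomorphism property, which produces a uniform $\alpha^2$ on every argument.

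Finally, multiplicativity means $\alpha(x\prec_\alpha y)=\alpha(x)\prec_\alpha\alpha(y)$. The left side is $\alpha(\alpha(x)\prec\alpha(y))=\alpha^2(x)\prec\alpha^2(y)$, and the right side is by definition $\alpha^2(x)\prec\alpha^2(y)$. The same computation works for $\succ_\alpha$.

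There is no real obstacle here. The main thing to track is that every term ends up with exactly $\alpha^2$ applied to each of $x,y,z$, so that the untwisted identities can be invoked verbatim.
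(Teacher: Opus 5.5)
Your proof is correct and follows the paper's argument: push $\alpha$ through the inner product using the endomorphism property so that every argument carries $\alpha^2$, then apply the untwisted dendriform axiom, with multiplicativity checked the same way. The only cosmetic difference is that the paper factors $\alpha^2$ out of the whole expression before invoking the axiom, whereas you invoke it directly at $(\alpha^2(x),\alpha^2(y),\alpha^2(z))$.
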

\begin{proof}
It's easy to show that, 
$$
 \alpha(x\prec_\alpha y)=\alpha(x)\prec_\alpha \alpha(y),\;\;\;\text{and}\;\;\;
\alpha(x\succ_\alpha y)=\alpha(x)\succ_\alpha \alpha(y),\;\forall x,y\in\mathcal{H}(\mathcal A).
$$
Let $x,y,z\in\mathcal H(\mathcal A)$, then we have:
\begin{align*}
(x\prec_\alpha y)\prec_\alpha\alpha(z)&=(\alpha^{2}(x)\prec\alpha^{2}(y))\prec\alpha^{2}(z)\\
&=\alpha^{2}((x\prec y)\prec z)\\
&=\alpha^{2}(x\prec(y\prec z+y\succ z))\\
&=\alpha(x)\prec_\alpha(y\prec_\alpha z+y\succ_\alpha z)
\end{align*}
\begin{align*}
(x\succ_\alpha y)\prec_\alpha\alpha(z)&=(\alpha^{2}(x)\succ\alpha^{2}(y))\prec\alpha^{2}(z)\\
&=\alpha^{2}((x\succ y)\prec z)\\
&=\alpha^{2}(x\succ(y\prec z)) \\
&=\alpha(x)\succ_\alpha(y\prec_\alpha z)
\end{align*}
\begin{align*}
\alpha(x)\succ_\alpha(y\succ_\alpha z)&=\alpha^{2}(x)\succ(\alpha^{2}(y)\succ\alpha^{2}(z))\\
&=\alpha^{2}(x\succ(y\succ z))\\
&=\alpha^{2}((x\prec y+x\succ y)\succ z) \\
&=(x\prec_\alpha y+x\succ_\alpha y)\succ_\alpha\alpha(z),
\end{align*}
which gives the result.
\end{proof}
\begin{prop}\label{equiv-Hom-dend-Hom-ass}
Let $\mathcal A$ be a $\mathbb Z_2$-graded vector space equipped with two even bilinear operations $\prec,\succ:\mathcal A\times\mathcal A\to\mathcal A$ and an even linear map $\alpha:\mathcal A\to\mathcal A$. Then the following conditions are equivalent:
\begin{enumerate}
\item $(\mathcal A,\prec,\succ,\alpha)$ is a Hom-dendriform superalgebra.
\item The triplet $(\mathcal A,\star,\alpha)$ where $\star$ is defined by Eq. \eqref{comp-hom-ass-of-Hom-dend} is a  Hom-associative superalgebra and $(\mathcal A,\mathfrak L_\succ,\mathfrak R_\prec,\alpha)$ is a representation of $(\mathcal A,\star,\alpha)$,
\item $(\mathcal A\oplus\mathcal A,\bullet,\alpha^{\otimes2})$ is a Hom-associative superalgebra where $\bullet$ and $\alpha^{\otimes2}$ are defined by 
\begin{align}
\alpha^{\otimes2}(x,a)&=(\alpha(x),\alpha(a)),\label{Hom-assoc-direct-sum-Hom-dend1}\\
(x,a)\bullet(y,b)&=(x\prec y+x\succ y,x\succ b+a\prec y),\label{Hom-assoc-direct-sum-Hom-dend2}
\end{align}
for any $x,y,a,b\in\mathcal H(\mathcal A)$.
\end{enumerate}
\end{prop}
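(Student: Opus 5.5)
I would prove the equivalences through the cycle $(1)\Rightarrow(2)\Rightarrow(3)\Rightarrow(1)$. The link between (2) and (3) is essentially the semi-direct product characterization recalled above: $(V,\mathfrak l,\mathfrak r,\beta)$ is a representation of $(\mathcal A,\mu,\alpha)$ exactly when $\mathcal A\ltimes_{\mathfrak l,\mathfrak r}V$ is Hom-associative.

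The key observation is that $(\mathcal A\oplus\mathcal A,\bullet,\alpha^{\otimes2})$ is precisely the semi-direct product $\mathcal A\ltimes_{\mathfrak L_\succ,\mathfrak R_\prec}\mathcal A$ with $V=\mathcal A$ and $\beta=\alpha$. Here $\mathfrak L_\succ(x)b=x\succ b$ and $\mathfrak R_\prec(y)a=a\prec y$. The only point to check is the sign convention in Eq. \eqref{Hom-ass-direct-sum1}, where the term reads $(-1)^{|y||u|}\mathfrak r(y)u$. I would adopt the convention $\mathfrak R_\prec(y)(a)=(-1)^{|a||y|}a\prec y$, so that this term equals $a\prec y$ as in \eqref{Hom-assoc-direct-sum-Hom-dend2}. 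This is consistent with the adjoint convention \eqref{adj-rep-H-ass-sup} read in the Koszul sense. With this identification, (2)$\Leftrightarrow$(3) follows immediately from the semi-direct product statement.

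For (1)$\Rightarrow$(2), Hom-associativity of $\star$ is Proposition \ref{Hom-dend-to-Hom-ss}. For the representation axioms I would translate each condition into the dendriform identities:
\begin{itemize}
\item \eqref{rAs2} for $\mathfrak L_\succ$, applied to $z$, reads $(x\star y)\succ\alpha(z)=\alpha(x)\succ(y\succ z)$, which is \eqref{cond-Hom-dend3}.
\item \eqref{rAs3} for $\mathfrak R_\prec$, applied to $z$, becomes after cancelling signs $\alpha(z)\prec(x\star y)=(z\prec x)\prec\alpha(y)$, which is \eqref{cond-Hom-dend1}.
\item \eqref{rAs4}, applied to $z$, becomes $\alpha(x)\succ(z\prec y)=(x\succ z)\prec\alpha(y)$, which is \eqref{cond-Hom-dend2}.
\end{itemize}

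Condition \eqref{rAs1}, namely $\alpha(x\succ z)=\alpha(x)\succ\alpha(z)$ and its analogue for $\prec$, is the main obstacle. It is multiplicativity, and it does not follow from Definition \ref{Hom-dend-superalg}. I would handle it the same way for both directions: either assume multiplicativity (as is implicit in the paper's usage), or note that \eqref{rAs1} also holds in the semi-direct-product criterion only under the same multiplicativity convention. Either way the equivalence (2)$\Leftrightarrow$(3) is unaffected, and for (1) one compares exactly the three identities \eqref{rAs2}–\eqref{rAs4}.

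For (3)$\Rightarrow$(1) directly, I would expand the Hom-associator of $\bullet$ on $(x,a),(y,b),(z,c)$. The first component gives Hom-associativity of $\star$. The second component equals
$$((x\star y)\succ\alpha(c)-\alpha(x)\succ(y\succ c))+((x\succ b)\prec\alpha(z)-\alpha(x)\succ(b\prec z))+((a\prec y)\prec\alpha(z)-\alpha(a)\prec(y\star z)).$$
Setting two of $a,b,c$ to zero at a time isolates \eqref{cond-Hom-dend3}, \eqref{cond-Hom-dend2} and \eqref{cond-Hom-dend1} in turn. The argument is bookkeeping except for the sign convention and the multiplicativity caveat above.
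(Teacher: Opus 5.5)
Your proposal is correct and follows essentially the paper's route. Both arguments translate \eqref{rAs2}--\eqref{rAs4} for $(\mathfrak L_\succ,\mathfrak R_\prec)$ into the three Hom-dendriform identities, and both expand the Hom-associator of $\bullet$ componentwise; that expansion is just the semi-direct product criterion specialized to $V=\mathcal A$, with the Koszul-sign convention for $\mathfrak R_\prec$ that the paper's proof also uses implicitly.

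Your caveat about \eqref{rAs1} is well founded, and the paper's proof silently skips it. That condition amounts to multiplicativity of $\alpha$ for $\prec$ and $\succ$, which neither (1) nor (3) implies: take $\alpha=\lambda\,\mathrm{id}$ with $\lambda\neq 0,1$ on a dendriform superalgebra with $\succ\neq 0$. So the statement holds only with multiplicativity assumed, or with \eqref{rAs1} dropped from the definition of representation. This affects (3)$\Rightarrow$(2) just as much as (1)$\Rightarrow$(2), so "(2)$\Leftrightarrow$(3) is unaffected" is only true under one of those two conventions.
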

\begin{proof}
$(1)\Leftrightarrow (2)$ By Proposition \ref{Hom-dend-to-Hom-ss}, the triplet $(\mathcal A,\star,\alpha)$  is a  Hom-associative superalgebra.\\
For any $x,y,z\in\mathcal H(\mathcal A)$, we have:
\begin{align*}
\mathfrak L_\succ(x\star y)\alpha(z)&=(x\star y)\succ\alpha(z)\\
&=(x\prec y+x\succ y)\succ\alpha(z)\\
&=\alpha(x)\succ(y\succ z)\\
&=\mathfrak L_\succ(\alpha(x))\mathfrak L_\succ(y)z.
\end{align*}
Similarly we have:
\begin{align*}
\mathfrak R_\prec(x\star y)\alpha(z)&=(-1)^{|z|(|x|+|y|)}\alpha(z)\prec (x\star y)\\
&=(-1)^{|z|(|x|+|y|)}(z\prec x)\prec \alpha(y) \\
&=(-1)^{|x||y|}\mathfrak R_\prec(\alpha(y))\mathfrak R_\prec(x)z,
\end{align*}
and
\begin{align*}
\mathfrak L_\succ(\alpha(x))\mathfrak R_\prec(y)z&=(-1)^{|y||z|}\alpha(x)\succ(z\prec y)\\
&=(-1)^{|y||z|}(x\succ z)\prec\alpha(y) \\
&=(-1)^{|x||y|}\mathfrak R_\prec(\alpha(y))\mathfrak L_\succ(x)z.
\end{align*}
 Then $(\mathcal A,\mathfrak L_\succ,\mathfrak R_\prec,\alpha)$ is a representation of $(\mathcal A,\star,\alpha)$.\\
 
 $(2)\Leftrightarrow(3)$ Let $x,y,z,a,b,c\in\mathcal H(\mathcal A)$, then:
 \begin{align*}
 ((x,a)\bullet(y,b))\bullet\alpha^{\otimes2}(z,c)=&(x\prec y+x\succ y,x\succ b+a\prec y)\bullet(\alpha(z),\alpha(c))\\
 =&\Big((x\prec y+x\succ y)\prec\alpha(z)+(x\prec y+x\succ y)\succ \alpha(z),\\
&(x\prec y+x\succ y)\succ \alpha(c)+(x\succ b+a\prec y)\prec\alpha(z)\Big)\\
=&\Big((x\star y)\star\alpha(z),\mathfrak L_\succ(x\star y)\alpha(c)+(-1)^{|x||z|}\mathfrak R_\prec(\alpha(z))\mathfrak L_\succ(x)b\\
&+(-1)^{|y||z|}\mathfrak R_\prec(\alpha(z))\mathfrak R_\prec(y)a\Big), \\
\end{align*}
and\\ 

\begin{align*}
\alpha^{\otimes2}(x,a)\bullet ((y,b)\bullet(z,c))=&
(\alpha(x),\alpha(a))\bullet(y\prec z+y\succ z,y\succ c+b\prec z) \\
=&\Big(\alpha(x)\prec(y\prec z+y\succ z)+\alpha(x)\succ(y\prec z+y\succ z),\\
&\alpha(x)\succ(y\succ c+b\prec z)+\alpha(a)\prec(y\prec z+y\succ z)\Big)\\
=&\Big((x\star y)\star\alpha(z),\mathfrak L_\succ(\alpha(x))\mathfrak L_\succ(y)c+\mathfrak L_\succ(\alpha(x))\mathfrak R_\prec(z)b+\mathfrak R_\prec(y\star z)\alpha(a)\Big).
  \end{align*}
  Then, $(\mathcal A\oplus\mathcal A,\bullet,\alpha^{\otimes2})$ forms a Hom-associative superalgebra if and only if the following identity holds: 
  $$((x,a)\bullet(y,b))\bullet\alpha^{\otimes2}(z,c)=\alpha^{\otimes2}(x,a)\bullet ((y,b)\bullet(z,c)).$$
  This condition translates into the pair of equations:
  $$(x\star y)\star\alpha(z)=\alpha(x)\star(y\star z)$$ and 
  \begin{align*}
  &~\mathfrak L_\succ(x\star y)\alpha(c)+(-1)^{|x||z|}\mathfrak R_\prec(\alpha(z))\mathfrak L_\succ(x)b+(-1)^{|y||z|}\mathfrak R_\prec(\alpha(z))\mathfrak R_\prec(y)a\\
  =~&\mathfrak L_\succ(\alpha(x))\mathfrak L_\succ(y)c+\mathfrak L_\succ(\alpha(x))\mathfrak R_\prec(z)b+\mathfrak R_\prec(y\star z)\alpha(a).
  \end{align*}
  We conclude that $(\mathcal A,\star,\alpha)$ is a Hom-associative superalgebra, and by identification, the quadruplet $(\mathcal A,\mathfrak L_\succ,\mathfrak R_\prec,\alpha)$ constitutes a representation of $(\mathcal A,\star,\alpha)$.\\
  
\end{proof}
\subsection{Hom-Lie superalgebras}
\begin{df}(\cite{Faouzi-Abdenacer})
A \textbf{Hom-Lie superalgebra} is a triplet $(\mathcal{A}, [\cdot,\cdot],\alpha)$ consisting of a $\mathbb{Z}_2$-graded vector space $\mathcal{A}$, an even bilinear map $[\cdot,\cdot] : \mathcal{A}\otimes \mathcal{A} \longrightarrow \mathcal{A},~~( \;[\mathcal{A}_i,\mathcal{A}_j]\subseteq \mathcal{A}_{i+j},
~~\forall~~i,j\in \mathbb{Z}_2\;)$ and an even linear map  $\alpha:\mathcal{A}\rightarrow\mathcal{A}$  satisfying:
\begin{eqnarray}
 \label{H-skwesym}
 &&[x,y] = -(-1)^{|x||y|}[y,x]~~ \text{( super-skew-symmetry)},\\
\label{H-sJ}
&&  [\alpha(x),[y,z]]=[[x,y],\alpha(z)]+(-1)^{|x||y|}[\alpha(y),[x,z]]~~ \text{(Hom super-Jacobi identity),}
\end{eqnarray}
$\forall\ x,y,z \in \mathcal{H}(\mathcal{A})$.\\
We recover the classical Lie superalgebra  when $\alpha=id$.
\end{df}
\begin{exa}(\cite{O-N})
Let $\mathcal{A}=\mathcal{A}_{\overline{0}}\oplus \mathcal{A}_{\overline{1}}$ be a $2$-dimensional $\mathbb{Z}_2$-
graded vector space, where $\mathcal{A}_{\overline{0}}=<e_1>$
 and $\mathcal{A}_{\overline{1}}=<e_2>$. We consider the nonzero product given by
$$[e_2,e_2]=2\lambda e_1,$$
and the even linear map $\alpha:\mathcal{A}\rightarrow\mathcal{A}$ defined on the basis of $\mathcal{A}$ by $$\alpha(e_1)=\lambda e_1\;\text{and}\;\alpha(e_2)=\lambda e_2,$$
where $\lambda\in\mathbb{C}$. The triplet $(\mathcal{A},[\cdot,\cdot],\alpha)$ is a Hom-Lie superalgebra.
\end{exa}
\begin{thm}(\cite{Faouzi-Abdenacer})\label{Lie-twist-thm}
 Let $(\mathcal A,[\cdot,\cdot])$ be a Lie superalgebra and $\alpha:\mathcal A\to\mathcal A$ be an even algebra endomorphism. Define the linear map $[\cdot,\cdot]_\alpha:\mathcal A\otimes\mathcal A\to$ by
 \begin{equation}\label{exp-twist-Lie-sup}
  [x,y]_\alpha=[\alpha(x),\alpha(y)],\;\forall x,y\in\mathcal A.   
 \end{equation}
 Then, $(\mathcal A,[\cdot,\cdot]_\alpha,\alpha)$ is a multiplicative Hom-Lie superalgebra.
\end{thm}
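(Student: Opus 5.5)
The plan is a direct verification, modelled on the proof of Theorem \ref{dend-twist-thm}. Three things need checking for $(\mathcal A,[\cdot,\cdot]_\alpha,\alpha)$: that $[\cdot,\cdot]_\alpha$ is even and super-skew-symmetric, that $\alpha$ is multiplicative for $[\cdot,\cdot]_\alpha$, and that the Hom super-Jacobi identity \eqref{H-sJ} holds. Since $\alpha$ is even, $|\alpha(x)|=|x|$ for every homogeneous $x$. Hence $[\cdot,\cdot]_\alpha$ maps $\mathcal A_i\otimes\mathcal A_j$ into $\mathcal A_{i+j}$, and all sign factors $(-1)^{|x||y|}$ are unchanged when the arguments are replaced by their images under $\alpha$.

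For super-skew-symmetry, apply \eqref{H-skwesym} for the original bracket to the elements $\alpha(x),\alpha(y)$:
$$[x,y]_\alpha=[\alpha(x),\alpha(y)]=-(-1)^{|x||y|}[\alpha(y),\alpha(x)]=-(-1)^{|x||y|}[y,x]_\alpha .$$
For multiplicativity, use that $\alpha$ is an algebra endomorphism of $(\mathcal A,[\cdot,\cdot])$:
$$\alpha([x,y]_\alpha)=\alpha([\alpha(x),\alpha(y)])=[\alpha^2(x),\alpha^2(y)]=[\alpha(x),\alpha(y)]_\alpha .$$

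For the Hom super-Jacobi identity, take homogeneous $x,y,z$ and rewrite each of the three terms. Each one collapses to $\alpha^2$ applied to an ordinary double bracket:
$$[\alpha(x),[y,z]_\alpha]_\alpha=[\alpha^2(x),\alpha([\alpha(y),\alpha(z)])]=[\alpha^2(x),[\alpha^2(y),\alpha^2(z)]]=\alpha^2([x,[y,z]]).$$
In the same way,
$$[[x,y]_\alpha,\alpha(z)]_\alpha=\alpha^2([[x,y],z]),\qquad [\alpha(y),[x,z]_\alpha]_\alpha=\alpha^2([y,[x,z]]).$$
The super-Jacobi identity of $(\mathcal A,[\cdot,\cdot])$, in the Leibniz form
$$[x,[y,z]]=[[x,y],z]+(-1)^{|x||y|}[y,[x,z]],$$
is exactly \eqref{H-sJ} with $\alpha=id$. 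Applying the linear map $\alpha^2$ to it gives \eqref{H-sJ} for $[\cdot,\cdot]_\alpha$, which completes the proof.

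The only point needing care is the repeated use of the endomorphism property $\alpha([a,b])=[\alpha(a),\alpha(b)]$ to pull $\alpha^2$ outside the nested brackets. The sign bookkeeping is trivial because $\alpha$ is even, so I expect no real obstacle.
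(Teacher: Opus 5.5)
Your proof is correct: the checks of parity and super-skew-symmetry, of multiplicativity, and of the Hom super-Jacobi identity all go through, the last by rewriting each nested bracket as $\alpha^2$ of an ordinary bracket and applying $\alpha^2$ to the Leibniz form of the super-Jacobi identity. The paper quotes this result from the literature without proof, but yours is the same direct twisting argument the paper uses for its analogous results, Theorem \ref{dend-twist-thm} and Proposition \ref{prop-twist-anti-pre-Lie}.
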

\begin{prop}(\cite{Faouzi-Abdenacer})\label{Hom-Lie-super-twist}
Let $(\mathcal{A},\mu,\alpha)$ be a Hom-associative superalgebra, then $(\mathcal{A},[\cdot,\cdot]_\mu,\alpha)$ is a Hom-Lie superalgebra, where
$$[x,y]_\mu=\mu(x,y)-(-1)^{|x||y|}\mu(y,x),$$
for all $x,y\in\mathcal{H}(\mathcal{A})$.
\end{prop}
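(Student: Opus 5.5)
We verify the two defining conditions of a Hom-Lie superalgebra for $[x,y]_\mu=\mu(x,y)-(-1)^{|x||y|}\mu(y,x)$. Evenness is immediate because $\mu$ is even. Super-skew-symmetry \eqref{H-skwesym} is also immediate: $-(-1)^{|x||y|}[y,x]_\mu=-(-1)^{|x||y|}\mu(y,x)+\mu(x,y)$, since $(-1)^{2|x||y|}=1$. All the work lies in the Hom super-Jacobi identity \eqref{H-sJ}.

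For \eqref{H-sJ}, fix homogeneous $x,y,z$. Expand each of the three brackets $[\alpha(x),[y,z]_\mu]_\mu$, $[[x,y]_\mu,\alpha(z)]_\mu$ and $(-1)^{|x||y|}[\alpha(y),[x,z]_\mu]_\mu$ into four terms. Each term has the form $\mu(\mu(u,v),\alpha(w))$ or $\mu(\alpha(u),\mu(v,w))$, with $(u,v,w)$ a permutation of $(x,y,z)$, so there are twelve terms in all, each with a Koszul sign. Using Hom-associativity $ass_\mu^\alpha=0$, replace every term of the form $\mu(\alpha(u),\mu(v,w))$ by $\mu(\mu(u,v),\alpha(w))$. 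Everything is then expressed in the six "left-bracketed" monomials $\mu(\mu(u,v),\alpha(w))$.

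After this normalization, check that each of the six monomials appears with total coefficient zero in
\[
[\alpha(x),[y,z]_\mu]_\mu-[[x,y]_\mu,\alpha(z)]_\mu-(-1)^{|x||y|}[\alpha(y),[x,z]_\mu]_\mu .
\]
For example, $\mu(\mu(x,y),\alpha(z))$ arises once from the first bracket (via associativity) with sign $+$ and once from the second with sign $-$, so the two occurrences cancel. The remaining five monomials cancel in pairs in the same way.

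The only real obstacle is sign bookkeeping. A cleaner route is to write
\[
\mathrm{SJ}(x,y,z)=\sum \pm\, ass_\mu^\alpha(\sigma(x,y,z)),
\]
a signed sum of Hom-associators over the six permutations $\sigma$, exactly as in the untwisted proof that the supercommutator of an associative superalgebra is Lie. Concretely, one checks that the Hom-Jacobiator equals
\[
ass(x,y,z)-(-1)^{|x||y|}ass(y,x,z)-(-1)^{|y||z|}ass(x,z,y)+(-1)^{|x||y|+|x||z|}ass(y,z,x)+(-1)^{|x||z|+|y||z|}ass(z,x,y)-(-1)^{|x||y|+|x||z|+|y||z|}ass(z,y,x),
\]
up to an overall sign. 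Each sign is the Koszul sign of the corresponding permutation, which we verify by comparing the coefficient of one representative monomial for each permutation. Every associator vanishes by Hom-associativity, so the identity holds.

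Note that no multiplicativity of $\alpha$ is needed: $\alpha$ only ever appears applied to a single variable.
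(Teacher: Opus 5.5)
Your proof is correct: grouping the twelve terms by permutation shows that $[\alpha(x),[y,z]_\mu]_\mu-[[x,y]_\mu,\alpha(z)]_\mu-(-1)^{|x||y|}[\alpha(y),[x,z]_\mu]_\mu$ is exactly $-1$ times the signed sum of Hom-associators you display, so it vanishes by Hom-associativity, and the super-skew-symmetry check is as you say. The paper gives no proof of its own but quotes the result from \cite{Faouzi-Abdenacer}, whose argument is this same identity writing the Hom super-Jacobiator as a graded alternating sum of Hom-associators over $S_3$, so your route is the standard one.
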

As a direct consequence of Proposition \eqref{Hom-dend-to-Hom-ss}, we obtain the following result.
\begin{cor}
Let $(\mathcal A,\prec,\succ,\alpha)$ be a Hom-dendriform superalgebra. Then, the linear map $[\cdot,\cdot]_{\prec,\succ}:\mathcal A\otimes\mathcal A\to\mathcal A$ defined by:
\begin{equation}\label{ind-Hom-Lie-hom-dendr}
[x,y]_{\prec,\succ}=x\prec y-y\prec x+x\succ y-y\succ x,\;\forall x,y\in\mathcal H(\mathcal A),    
\end{equation}
defines on $\mathcal A$ a Hom-Lie superalgebra structure.
\end{cor}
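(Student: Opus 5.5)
The plan is to obtain the statement by composing two results already established: Proposition \ref{Hom-dend-to-Hom-ss} and Proposition \ref{Hom-Lie-super-twist}. No new identity has to be verified by hand. The whole point is that the bracket \eqref{ind-Hom-Lie-hom-dendr} is exactly the super-commutator of the Hom-associative product $\star$ from \eqref{comp-hom-ass-of-Hom-dend}.

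First, starting from the Hom-dendriform superalgebra $(\mathcal A,\prec,\succ,\alpha)$, I apply Proposition \ref{Hom-dend-to-Hom-ss}. It gives that $(\mathcal A,\star,\alpha)$, with $x\star y=x\prec y+x\succ y$, is a Hom-associative superalgebra. The product $\star$ is even, since $\prec$ and $\succ$ are even, and $\alpha$ is even by hypothesis, so the hypotheses of Proposition \ref{Hom-Lie-super-twist} are met. That proposition then yields a Hom-Lie superalgebra $(\mathcal A,[\cdot,\cdot]_\star,\alpha)$ with
\[
[x,y]_\star=x\star y-(-1)^{|x||y|}y\star x .
\]

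Second, I expand this bracket by bilinearity for homogeneous $x,y\in\mathcal H(\mathcal A)$:
\[
[x,y]_\star=x\prec y+x\succ y-(-1)^{|x||y|}\big(y\prec x+y\succ x\big).
\]
This is the bracket $[x,y]_{\prec,\succ}$ of \eqref{ind-Hom-Lie-hom-dendr}, with terms regrouped. Super-skew-symmetry \eqref{H-skwesym} and the Hom super-Jacobi identity \eqref{H-sJ} are then inherited directly from Proposition \ref{Hom-Lie-super-twist}.

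The only delicate point, which I expect to be the main obstacle, is the sign convention in \eqref{ind-Hom-Lie-hom-dendr}. As written, that formula omits the Koszul sign $(-1)^{|x||y|}$ on the terms $y\prec x$ and $y\succ x$. Without the sign, super-skew-symmetry fails when $x$ and $y$ are both odd: the unsigned expression is antisymmetric, whereas \eqref{H-skwesym} requires it to be symmetric there. I would therefore read \eqref{ind-Hom-Lie-hom-dendr} as the graded commutator, that is, with the factor $(-1)^{|x||y|}$ inserted. With that reading, the identification in the second step is exact and the corollary follows. In the purely even case the two readings agree and nothing needs adjusting.
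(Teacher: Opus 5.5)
Your proposal is correct and is the paper's argument: the paper presents the corollary as a direct consequence of Proposition \ref{Hom-dend-to-Hom-ss}, implicitly combined with Proposition \ref{Hom-Lie-super-twist} applied to $\star$, exactly as you do. Your remark on signs is also correct and corrects the printed formula: without $(-1)^{|x||y|}$ on $y\prec x$ and $y\succ x$, the bracket is not super-skew-symmetric when $x$ and $y$ are both odd, so the statement must be read with the graded commutator.
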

\begin{df}(\cite{Fawzi-Makhlouf-Saadaoui})
A \textbf{representation} of a Hom-Lie superalgebra $(\mathcal{A},[\cdot,\cdot],\alpha)$ on a $\mathbb{Z}_2$-graded vector space $V$ with respect to $\beta\in End_{\mathbb F}(V)$ is an even linear map $\rho:\mathcal{A}\to End_{\mathbb F}(V)$ such that for all $x,y\in\mathcal{H}(\mathcal A)$, the following equalities are satisfied:
\begin{eqnarray}
\rho(\alpha(x))\beta&=&\beta\rho(x),\label{repres-Hom-Lie1}\\
\rho([x,y])\beta&=&\rho(\alpha(x))\rho(y)-(-1)^{|x||y|}\rho(\alpha(y))\rho(x).\label{repres-Hom-Lie2}
\end{eqnarray}
This type of representation of a Hom-Lie superalgebra is typically denoted by $(V, \rho, \beta)$.
\end{df}
\begin{exa}
For any $x\in\mathcal{H}(\mathcal A)$, the linear map $\mathfrak{ad}:\mathcal A\to gl(\mathcal A);\;x\mapsto \mathfrak{ad}(x)$ (or $\mathfrak{ad}_x$) defined by
$$\mathfrak{ad}_x(y)=[x,y],\;\forall y\in\mathcal{H}(\mathcal A),$$
defines a representation on $\mathcal A$ called \textbf{adjoint representation}.
\end{exa}
\begin{prop}(\cite{O-N})\label{sumdirecthomLie}
Let $(\mathcal{A}, [\cdot,\cdot ],\alpha)$ be a Hom-Lie superalgebra and $(V,\beta)$ be a Hom-module. Let
$\rho:\mathcal{A} \to gl(V )$ be an even linear map. The triplet $(V,\rho,\beta)$ is a representation of
$(\mathcal{A}, [\cdot,\cdot ],\alpha)$ if and only if the direct sum  $\mathcal{A}\oplus V $ of $\mathbb{Z}_2$-graded vector spaces $\mathcal{A}$ and $V$
turns into a Hom-Lie superalgebra by defining the linear map \eqref{Hom-ass-direct-sum2} and the following multiplication
\begin{equation}
    [x+u,y+v]_{\mathcal{A}\oplus V}=[x,y]+\rho(x)v-(-1)^{|y||u|}\rho(y)u\label{Hom-Lie-direct-sum},
\end{equation}
called semi-direct product of the Hom-Lie superalgebra $(\mathcal A,[\cdot,\cdot],\alpha)$ and $V$ which simply denoted by $\mathcal A\ltimes_{\rho} V$.
\end{prop}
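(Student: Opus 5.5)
The plan is to verify both implications directly, by expanding the two Hom-Lie superalgebra axioms for the bracket $[\cdot,\cdot]_{\mathcal A\oplus V}$ of Eq. \eqref{Hom-Lie-direct-sum} with twist $\alpha+\beta$ on homogeneous elements $x+u$, $y+v$, $z+w$ with $x,y,z\in\mathcal H(\mathcal A)$, $u,v,w\in\mathcal H(V)$ of the same parity in each pair. Because $V$ is an abelian ideal in $\mathcal A\oplus V$ (brackets of two elements of $V$ vanish), every expression splits into a component in $\mathcal A$ and a component in $V$. Moreover, the $V$-component is linear in $(u,v,w)$, so each condition decomposes into independent pieces.

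First, super-skew-symmetry. We compute $-(-1)^{|x||y|}[y+v,x+u]_{\mathcal A\oplus V}=-(-1)^{|x||y|}[y,x]-(-1)^{|x||y|}\rho(y)u+\rho(x)v$, where $|u|=|x|$ is used in the sign. This equals $[x+u,y+v]_{\mathcal A\oplus V}$ by skew-symmetry of $[\cdot,\cdot]$. Hence skew-symmetry holds automatically and imposes no condition on $\rho$.

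Second, the Hom super-Jacobi identity \eqref{H-sJ}. Its $\mathcal A$-component is exactly the Hom super-Jacobi identity of $\mathcal A$, so it holds. For the $V$-component we set two of $u,v,w$ to zero and keep the third; by linearity it suffices to treat each single vector separately. Taking only $w\neq0$ gives
\[
\rho(\alpha(x))\rho(y)w=\rho([x,y])\beta(w)+(-1)^{|x||y|}\rho(\alpha(y))\rho(x)w,
\]
which is precisely \eqref{repres-Hom-Lie2}. The cases with only $u\neq0$ or only $v\neq0$ reduce, after tracking the Koszul signs and applying skew-symmetry of the bracket, to the same identity with permuted arguments. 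This gives \enquote{representation $\Rightarrow$ Hom-Lie superalgebra}. Conversely, specialising the Hom super-Jacobi identity on $\mathcal A\oplus V$ to $u=v=0$ recovers \eqref{repres-Hom-Lie2}.

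The one real obstacle is condition \eqref{repres-Hom-Lie1}, $\rho(\alpha(x))\beta=\beta\rho(x)$, since it does not come out of the Jacobi identity. If the paper's notion of Hom-Lie superalgebra requires multiplicativity of the twist, then this condition is exactly the $V$-part of $(\alpha+\beta)[x,v]=[\alpha(x),\beta(v)]$, and both directions go through. Otherwise the reverse implication only yields \eqref{repres-Hom-Lie2}, and I would add multiplicativity of $\alpha+\beta$ as the standing convention for the semi-direct product (as in \cite{O-N}). The remaining work, the sign bookkeeping in the $u$- and $v$-cases, is routine: one uses $|u|=|x|$, $|v|=|y|$ throughout.
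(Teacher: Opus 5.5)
The paper gives no proof of this proposition; it is quoted from \cite{O-N}, so there is no argument of the paper's to compare with. Your componentwise verification is the standard one, and the computations are right. Skew-symmetry of $[\cdot,\cdot]_{\mathcal A\oplus V}$ is automatic, and the $\mathcal A$-component of the Hom super-Jacobi identity is just that of $\mathcal A$. The $w$-, $v$- and $u$-parts of the $V$-component are, up to an overall sign, \eqref{repres-Hom-Lie2} for the pairs $(x,y)$, $(x,z)$ and $(y,z)$. In particular, the implication ``representation $\Rightarrow$ semi-direct product is Hom-Lie'' never uses \eqref{repres-Hom-Lie1}.

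You should not leave the status of \eqref{repres-Hom-Lie1} conditional. In this paper a Hom-Lie superalgebra is \emph{not} assumed multiplicative, and a Hom-module $(V,\beta)$ imposes no compatibility between $\beta$ and $\rho$. So the reverse implication, as stated, is false, not merely out of reach of your method.

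Here is a counterexample. Take $\mathcal A=\mathcal A_{\overline{0}}=\mathbb F e$ with zero bracket and $\alpha=0$. Take $V=V_{\overline{0}}=\mathbb F f$ with $\beta=\mathrm{id}_V$, and set $\rho(e)=\mathrm{id}_V$. Then both $(\alpha+\beta)(\mathcal A\oplus V)$ and $[\mathcal A\oplus V,\mathcal A\oplus V]_{\mathcal A\oplus V}$ lie in $V$. Hence every term of the Hom super-Jacobi identity on $\mathcal A\oplus V$ is a bracket of two elements of $V$, and so vanishes. Thus $\mathcal A\ltimes_\rho V$ is a Hom-Lie superalgebra. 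However, $\rho(\alpha(e))\beta=0\neq \mathrm{id}_V=\beta\rho(e)$, so $(V,\rho,\beta)$ is not a representation.

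The proposition therefore has to be amended in one of three ways:
\begin{enumerate}
\item keep only the forward implication;
\item state an equivalence between \eqref{repres-Hom-Lie2} alone and the Hom-Lie structure on $\mathcal A\oplus V$;
\item use your multiplicative version.
\end{enumerate}
The multiplicative version is sound, because multiplicativity of $\alpha+\beta$ on mixed pairs $(x,v)$ is exactly \eqref{repres-Hom-Lie1}, and on pairs from $V$ it is vacuous. But you must then assume $(\mathcal A,[\cdot,\cdot],\alpha)$ itself multiplicative, not just impose multiplicativity on the semi-direct product. Otherwise the forward direction cannot yield a multiplicative $\alpha+\beta$.
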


\subsection{Noncommutative Hom-Poisson superalgebras and noncommutative Hom-pre-Poisson superalgebras}
\begin{df}(\cite{O-N})
A \textbf{non-commutative Hom-Poisson superalgebra} is a quadruplet $(\mathcal{A}, [\cdot, \cdot], \mu, \alpha)$ consisting of
\begin{enumerate}
\item A Hom-Lie superalgebra $(\mathcal{A}, [\cdot, \cdot],\alpha)$,
\item A Hom-associative superalgebra $(\mathcal{A},\mu, \alpha)$,
\item The Hom-Leibniz superalgebra identity
\begin{equation}\label{Homleibnizident}
[\alpha(x),\mu(y,z)]=\mu([x,y],\alpha(z))+(-1)^{|x||y|}\mu(\alpha(y),[x,z])
\end{equation}
is satisfied for all $x,y,z\in\mathcal{H}(\mathcal{A})$.
\end{enumerate}
\end{df}
A Hom-Poisson superalgebra is a non-commutative Hom-Poisson superalgebra
 $(\mathcal{A}, [\cdot, \cdot], \mu, \alpha)$ in
which $\mu$ is super-commutative.\\

\begin{exa}(\cite{O-N})
Let $\mathcal{A} = \mathcal{A}_{\overline{0}} \oplus \mathcal{A}_{\overline{1}}$ be a $3$-dimensional superspace, where
$\mathcal{A}_{\overline{0}}=<e_1,e_2>$ and $\mathcal{A}_{\overline{1}}=<e_3>$. We define on the basis of $\mathcal{A}$, the following linear map
$$\alpha(e_1)=e_1\;,\;\alpha(e_2)=e_1+e_2,$$
and the following nonzero products
$$\mu(e_1,e_2)=e_1\;,\;\mu(e_2,e_2)=e_1+e_2,$$
$$[e_1,e_2]=e_1.$$
Then the quadruplett $(\mathcal{A},[\cdot,\cdot],\mu,\alpha)$ is a non-commutative Hom-Poisson superalgebra.
\end{exa}


\begin{rem}
If $(\mathcal{A}, [\cdot, \cdot], \mu, \alpha)$ is a Hom-Poisson superalgebra, then the identity \eqref{Homleibnizident} is equivalent to
\begin{equation}\label{equiv-Homleibnizident}
[\alpha(x),\mu(y,z)]=\mu([x,y],\alpha(z))+(-1)^{|y||z|}\mu([x,z],\alpha(y)).
\end{equation}
\end{rem}
If $\alpha$ is a Poisson algebra endomorphism, that is,
\[
\alpha([x,y])=[\alpha(x),\alpha(y)] \quad \text{and} \quad 
\alpha(\mu(x,y))=\mu(\alpha(x),\alpha(y))
\]
for all $x,y \in \mathcal{H}(\mathcal{A})$, then $\mathcal{A}$ is called a \emph{\textbf{multiplicative Hom-Poisson superalgebra}}. Moreover, if the twisting map $\alpha$ is bijective, then $\mathcal{A}$ is said to be a \emph{\textbf{regular Hom-Poisson superalgebra}}.
\begin{df}\label{repr-noncom-Hom-Poiss}
Let $(\mathcal A,[\cdot,\cdot],\mu,\alpha)$ be a noncommutative Hom-Poisson superalgebra. A representation of $\mathcal A$ is a quintuple $(V,\rho,\mathfrak l,\mathfrak r,\beta)$ such that $(V,\rho,\beta)$ is a representation of the Hom-Lie superalgebra $(\mathcal A,[\cdot,\cdot],\alpha)$ and $(V,\mathfrak l,\mathfrak r,\beta)$ is a representation of the Hom-associative superalgebra $(\mathcal A,\mu,\alpha)$ satisfying for any $x,y\in\mathcal{H}(\mathcal A)$ the following equalities:
\begin{align}
\mathfrak l([x,y])\beta&=\rho(\alpha(x))\mathfrak l(y)-(-1)^{|x||y|}\mathfrak l(\alpha(y))\rho(x),\label{cond-rep-noncom-Hom-Pois1}\\   
\mathfrak r([x,y])\beta&=\rho(\alpha(x))\mathfrak r(y)-(-1)^{|x||y|}\mathfrak r(\alpha(y))\rho(x),\label{cond-rep-noncom-Hom-Pois2}\\
\rho(\mu(x,y))\beta&=\mathfrak l(\alpha(x))\rho(y)+(-1)^{|x||y|}\mathfrak r(\alpha(y))\rho(x).\label{cond-rep-noncom-Hom-Pois3}
\end{align}
\end{df}
\begin{exa}
 Let $(\mathcal A,[\cdot,\cdot],\mu,\alpha)$ be a noncommutative Hom-Poisson superalgebra. Then $(\mathcal A,\mathfrak{ad},\mathfrak L,\mathfrak R,\alpha)$ is a representation of $\mathcal A$, which is also called the \textbf{regular representation} of $\mathcal A$.   
\end{exa}
\begin{df}
 A \textbf{Hom-pre-Lie superalgebra} is a triplet $(\mathcal A,\circ,\alpha)$ consisting of a $\mathbb Z_2$-graded vector space $\mathcal A$ endowed with an even bilinear map $\circ:\mathcal A\times\mathcal A\to\mathcal A$ and an even linear map $\alpha:\mathcal A\to\mathcal A$ satisfying $\mathfrak{ass}(x,y,z)=(-1)^{|x||y|}\mathfrak{ass}(y,x,z)$, that is,
 \begin{equation}\label{cond-Hom-pre-Lie-sup}
 (x\circ y)\circ\alpha(z)-\alpha(x)\circ(y\circ z)=(-1)^{|x||y|}(y\circ x)\circ\alpha(z)-\alpha(y)\circ(x\circ z),  
 \end{equation}
 for all $x,y,z\in\mathcal H(\mathcal A)$.
\end{df}
\begin{exa}\label{exam-Hom-pre-Lie-sup}
Let $\mathcal A=\mathfrak{osp}(1|2)
=\mathcal A_{\overline 0}\oplus\mathcal A_{\overline 1},
$
where $\mathcal A_{\overline 0}=\langle H,E,F\rangle$ and $\mathcal A_{\overline 1}=\langle X,Y\rangle
$.
Define a linear map
$\alpha:\mathcal A\to\mathcal A$ by
\[
\alpha(H)=H,\quad
\alpha(E)=\frac{1}{2} E,\quad
\alpha(F)=2F,\quad
\alpha(X)= X,\quad
\alpha(Y)=Y,
\]

\medskip

\noindent

and a bilinear map $\circ:\mathcal A\times\mathcal A\to\mathcal A$
by the following nonzero products:
\[
\begin{aligned}
&H\circ E=2E,\qquad H\circ F=-2F,\qquad E\circ F=H,\\
&H\circ X=X,\qquad H\circ Y=-Y,\qquad E\circ Y=X,\qquad F\circ X=Y,\\
&X\circ X=2E,\qquad Y\circ Y=-2F,\qquad X\circ Y=H.
\end{aligned}
\]
Then $\alpha$ is an algebra endomorphism on the pre-Lie superalgebra $(\mathfrak{osp}(1|2),\circ)$ and  $(\mathfrak{osp}(1|2),\circ_\alpha,\alpha)$ is a Hom-pre-Lie superalgebra, where $\circ_\alpha$ is defined by $$x\circ_\alpha y=\alpha(x\circ y),\;\forall x,y\in\mathcal A.$$
\end{exa}
\begin{prop}
 Let $(\mathcal A,\prec,\succ,\alpha)$ be a Hom-dendriform superalgebra. Then, the bilinear product $\circ:\mathcal A\times\mathcal A\to\mathcal A$ defined by
 $$x\circ y=x\prec y-(-1)^{|x||y|}y\succ x,\;\forall x,y\in\mathcal H(\mathcal A),$$ define on $\mathcal A$ a Hom-pre-Lie superalgebra structure.
\end{prop}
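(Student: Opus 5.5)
The plan is to verify \eqref{cond-Hom-pre-Lie-sup} by direct expansion, using only the three axioms \eqref{cond-Hom-dend1}--\eqref{cond-Hom-dend3} of Definition \ref{Hom-dend-superalg}. No multiplicativity of $\alpha$ is assumed, so every rewriting must be an instance of one of these axioms. Fix homogeneous $x,y,z$ and write $\mathfrak{ass}(x,y,z)=(x\circ y)\circ\alpha(z)-\alpha(x)\circ(y\circ z)$. The goal is to show that $\mathfrak{ass}(x,y,z)-(-1)^{|x||y|}\mathfrak{ass}(y,x,z)=0$.

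\textbf{Step 1: expand $\mathfrak{ass}(x,y,z)$.} Substituting $x\circ y=x\prec y-(-1)^{|x||y|}y\succ x$ twice produces eight terms. Four come from $(x\circ y)\circ\alpha(z)$:
\[(x\prec y)\prec\alpha(z),\quad (y\succ x)\prec\alpha(z),\quad \alpha(z)\succ(x\prec y),\quad \alpha(z)\succ(y\succ x).\]
Four come from $\alpha(x)\circ(y\circ z)$:
\[\alpha(x)\prec(y\prec z),\quad \alpha(x)\prec(z\succ y),\quad (y\prec z)\succ\alpha(x),\quad (z\succ y)\succ\alpha(x).\]
Each term carries its Koszul sign, e.g.\ $(-1)^{|z|(|x|+|y|)}$ for moving $\alpha(z)$ past $x\circ y$. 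I will normalize the terms with the axioms as follows:
\begin{itemize}
\item \eqref{cond-Hom-dend1} combines $(x\prec y)\prec\alpha(z)-\alpha(x)\prec(y\prec z)$ into $\alpha(x)\prec(y\succ z)$.
\item \eqref{cond-Hom-dend2} turns $(y\succ x)\prec\alpha(z)$ into $\alpha(y)\succ(x\prec z)$, and $\alpha(z)\succ(x\prec y)$ into $(z\succ x)\prec\alpha(y)$.
\item \eqref{cond-Hom-dend3} turns $\alpha(z)\succ(y\succ x)$ into $(z\prec y+z\succ y)\succ\alpha(x)$. This partly cancels against the two last terms $(y\prec z)\succ\alpha(x)$ and $(z\succ y)\succ\alpha(x)$.
\end{itemize}

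\textbf{Step 2: simplify.} After Step 1, $\mathfrak{ass}(x,y,z)$ should reduce to a short list of normalized monomials. Each is a single product of the shape $\alpha(u)\,\square\,(v\,\square\, w)$ or $(u\,\square\, v)\,\square\,\alpha(w)$, with $\{u,v,w\}=\{x,y,z\}$.

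\textbf{Step 3: antisymmetrize.} Apply the substitution $x\leftrightarrow y$ to this reduced list, multiply by $(-1)^{|x||y|}$, and subtract. Then match the monomials pairwise. Where two monomials are not literally equal, relate them by one more application of \eqref{cond-Hom-dend2}, which is the only axiom exchanging a $\succ$-outer and a $\prec$-outer form. The signs should be tracked by the standard rule: a factor $(-1)^{|u||v|}$ for each transposition of homogeneous $u,v$. Since $\alpha$ is even, it never contributes to signs.

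\textbf{Main obstacle.} I expect the difficulty to lie entirely in Step 3: confirming that the residual terms really pair off under the $x\leftrightarrow y$ symmetrization, with matching signs. Of particular concern are the terms where $\alpha$ sits on $x$ or $y$ in the outer slot, such as $\alpha(x)\prec(y\succ z)$ and $\alpha(y)\succ(x\prec z)$. My first attempt will be to bring every term to the canonical form with $\alpha$ on the outermost right argument, using \eqref{cond-Hom-dend1}--\eqref{cond-Hom-dend3} read from left to right, so that the comparison becomes purely term-by-term. If some terms fail to cancel, I will recheck the convention in \eqref{cond-Hom-pre-Lie-sup}, namely that the factor $(-1)^{|x||y|}$ multiplies the whole associator $\mathfrak{ass}(y,x,z)$, against the sign convention in the definition of $\circ$.
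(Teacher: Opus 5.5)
There is a genuine gap, and it sits exactly where you expected: Step 3 cannot be completed, because \eqref{cond-Hom-pre-Lie-sup} is false for this product. Your Steps 1--2 are correct and give
\begin{align*}
\mathfrak{ass}(x,y,z)={}&\alpha(x)\prec(y\succ z)+(-1)^{|y||z|}\alpha(x)\prec(z\succ y)-(-1)^{|x||y|}\alpha(y)\succ(x\prec z)\\
&-(-1)^{|z|(|x|+|y|)}(z\succ x)\prec\alpha(y)+(-1)^{|x|(|y|+|z|)}(y\prec z)\succ\alpha(x)\\
&+(-1)^{|x||y|+|x||z|+|y||z|}(z\prec y)\succ\alpha(x).
\end{align*}
Rewrite $\alpha(y)\succ(x\prec z)=(y\succ x)\prec\alpha(z)$ by \eqref{cond-Hom-dend2}. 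The expression is then carried to itself by $y\leftrightarrow z$ together with the sign $(-1)^{|y||z|}$. So what your computation actually proves is the right-symmetric identity $\mathfrak{ass}(x,y,z)=(-1)^{|y||z|}\mathfrak{ass}(x,z,y)$. The symmetry in $x\leftrightarrow y$ demanded by \eqref{cond-Hom-pre-Lie-sup} fails in general; for instance, $\alpha(x)\prec(y\succ z)$ has no partner in $(-1)^{|x||y|}\mathfrak{ass}(y,x,z)$.

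Here is a concrete counterexample. Take $\mathcal A$ purely even and $\alpha=\mathrm{id}$, with basis $x,y,z,a,w,t$. The only nonzero products are $x\prec y=a$, $a\prec z=t$, $y\succ z=w$ and $x\prec w=t$. The only nonzero iterated products are $(x\prec y)\prec z=t=x\prec(y\succ z)$, so \eqref{cond-Hom-dend1}--\eqref{cond-Hom-dend3} hold. Yet $x\circ y=a$, $a\circ z=t$ and $y\circ x=x\circ z=y\circ z=0$. Hence $\mathfrak{ass}(x,y,z)=t\neq 0=\mathfrak{ass}(y,x,z)$.

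Since this example is purely even with $\alpha=\mathrm{id}$, your fallback cannot rescue Step 3. Neither rechecking Koszul signs nor repairing the missing parentheses in \eqref{cond-Hom-pre-Lie-sup} helps. The obstruction is which pair of arguments is symmetrized. Already classically, $x\prec y-y\succ x$ is the right pre-Lie product of a dendriform algebra, which is the convention behind the formula in the paper's introduction. The paper's Hom-pre-Lie identity, by contrast, is left-symmetric. The paper's own proof is just ``Straightforward'' and misses this.

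To obtain a true statement you must change either the conclusion or the product. Your computation, finished with the extra use of \eqref{cond-Hom-dend2} above, proves that $(\mathcal A,\circ,\alpha)$ is a right-symmetric Hom-pre-Lie superalgebra. Alternatively, take $x\circ y=x\succ y-(-1)^{|x||y|}y\prec x$. Then the same three-step scheme yields exactly \eqref{cond-Hom-pre-Lie-sup}, with no multiplicativity of $\alpha$ needed. It uses \eqref{cond-Hom-dend3} on $\alpha(x)\succ(y\succ z)$, \eqref{cond-Hom-dend1} on $(z\prec y)\prec\alpha(x)$, and \eqref{cond-Hom-dend2} on $\alpha(x)\succ(z\prec y)$ and $(y\succ z)\prec\alpha(x)$.
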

\begin{proof}
Straightforward.
\end{proof}
\begin{df}
A \textbf{noncommutative Hom-pre-Poisson superalgebra} is a quintuple $(A,\circ,\prec,\succ,\alpha)$ such that $(A,\circ,\alpha)$ is a Hom-pre-Lie superalgebra and $(A,\prec,\succ,\alpha)$ is a Hom-dendriform superalgebra satisfying the following compatibility conditions :
    \begin{align}
(x\circ y-(-1)^{|x||y|}y \circ x)\succ \alpha(z)&=\alpha(x)\circ(y\succ z)-(-1)^{|x||y|}\alpha(y)\succ(x \circ z),\label{cond-noncomm-Hom-pre-Poisson1}\\
\alpha(x)\prec(y\circ z -(-1)^{|y||z|} z \circ y)&=(-1)^{|x||y|} \alpha(y) \circ (x\prec z)-(-1)^{|x||y|}(y\circ x)\prec \alpha(z), \label{cond-noncomm-Hom-pre-Poisson2}\\
(x\succ y + x\prec y)\circ\alpha(z)&=(-1)^{|y||z|} (x\circ z)\prec \alpha(y)+\alpha(x) \succ(y\circ z).\label{cond-noncomm-Hom-pre-Poisson3}
\end{align}
\end{df}
\begin{rem}
When $\alpha=id$, we recover the noncommutative pre-Poisson superalgebras structures.    
\end{rem}
\begin{exa}\label{ex-noncom-H-pre-Pois}
Let us consider the Hom-pre-Lie superalgebra on $\mathfrak{osp}(1|2)$ given by Example \ref{exam-Hom-pre-Lie-sup}.  



Define two bilinear maps $\prec,\succ:\mathfrak{osp}(1|2)\times\mathfrak{osp}(1|2)\to\mathfrak{osp}(1|2) $ by:


\[
\begin{aligned}
&H\prec E=2E,\qquad E\prec F=H,\\
&H\prec X=X,\qquad E\prec Y=X,\\
&X\prec Y=H,\qquad X\prec X=2E.
\end{aligned}
\]

\medskip
\noindent
\[
\begin{aligned}
&E\succ H=-2E,\qquad F\succ E=-H,\\
&X\succ H=-X,\qquad Y\succ E=-X,\\
&Y\succ X=-H,\qquad Y\succ Y=2F.
\end{aligned}
\]
All other products are zero.
\medskip

\noindent
Define
\[
x\prec_\alpha y=\alpha(x\prec y),\qquad
x\succ_\alpha y=\alpha(x\succ y),
\quad \forall x,y\in\mathcal A.
\]

\medskip

Then $(\mathfrak{osp}(1|2),\circ_\alpha,\prec_\alpha,\succ_\alpha,\alpha)$ is a non-commutative Hom-pre-Poisson superalgebra.   
\end{exa}
\section{Hom-anti-associative superalgebras, Hom-anti-dendriform superalgebras and Hom-anti-pre-Lie superalgebras }\label{Sec3}

In this section, we introduce the Hom-case of some anti-algebraic structures such as, Hom-anti-associative superalgebras, Hom-anti-dendriform superalgebras and Hom-anti-pre-Lie superalgebras for which we give some examples and related results associated to these structures.\\

\subsection{Hom-anti-associative superalgebras}
\hspace{1cm}\\

Let us consider a Hom-superalgebra $(\mathcal A,\mu,\alpha)$, that is a $\mathbb Z_2$-graded vector space $\mathcal A$ with an even bilinear map $\mu$ and
an even linear map $\alpha$. Define the trilinear map $\mathfrak{aas}^\alpha_\mu:\mathcal A\times\mathcal A\times\mathcal A\to\mathcal A$ called \textbf{Hom-anti-associator} of $\mu$ by
\begin{equation}\label{exp-H-aas}
\mathfrak{aas}^\alpha_\mu(x,y,z)=\mu(\alpha(x),\mu(y,z))+\mu(\mu(x,y),\alpha(z)),\;\forall x,y,z\in\mathcal A.    
\end{equation}
If $\alpha=id$, we obtain \textbf{anti-associator} associated to $\mathcal A$ denoted by $\mathfrak{aas}_\mu$ and $(\mathcal A,\mu,\alpha)$ is called super-commutative (resp. super-anti-commutative) if $\mu$ is super-commutative (resp. super-anti-commutative), that is $$\mu(x,y)=(-1)^{|x||y|}\mu(y,x),\forall x,y\in\mathcal A,$$
(resp. $\mu(x,y)=-(-1)^{|x||y|}\mu(y,x),\;\forall x,y\in\mathcal A).$
\begin{df}
A  \textbf{Hom-anti-associative superalgebra} is a $\mathbb Z_2$-graded vector space $\mathcal A$ equipped with an even bilinear map $\mu:\mathcal A\times\mathcal A\to\mathcal A$ and an even linear map $\alpha:\mathcal A\to\mathcal A$ such that 
\begin{equation}\label{cond-noncom-anti-ass}
\mathfrak{aas}^\alpha_\mu(x,y,z)=0,\;\forall x,y,z\in\mathcal A.
\end{equation}
A Hom-anti-associative superalgebra $(\mathcal A,\mu,\alpha)$ is called:
\begin{enumerate}
\item \textbf{regular} if $\alpha$ is bijective. \item \textbf{multiplicative} if $\alpha$ is an algebra endomorphism that is, $\alpha\mu(x, y)=\mu(\alpha(x),\alpha(y)),$ for any $x,y\in\mathcal A$.
\end{enumerate}
\end{df}
\begin{rem}\
We recover the classical \textbf{anti-associative superalgebra} when $\alpha=id$.   
\end{rem}
\begin{exa}
Let $\mathcal{A}=\mathcal{A}_{\overline{0}}\oplus \mathcal{A}_{\overline{1}}$ be a $3$-dimensional $\mathbb{Z}_2$-
graded vector space, where $\mathcal{A}_{\overline{0}}=<e_0>$
 and $\mathcal{A}_{\overline{1}}=<e_1,e_2>$. The nonzero products given by
 $$\mu(e_1,e_2)=-\mu(e_2,e_1)=e_0$$
 and the even linear map $\alpha:\mathcal{A}\rightarrow\mathcal{A}$ defined on the basis of $\mathcal{A}$ by
 $$\alpha(e_0)=2e_0,\;\alpha(e_1)=e_1\;\text{and}\;\alpha(e_2)=-e_2,$$ defined on $\mathcal{A}$ a structure of Hom-anti-associative superalgebra.
\end{exa}
\begin{prop}\label{Prop-twis-Hom-ant-ass}
Let $(\mathcal A,\mu)$ be an anti-associative superalgebra and $\alpha:\mathcal A\to\mathcal A$ be an algebra endomorphism of $\mathcal A$. Define the binary operation $\mu_\alpha:\mathcal A\times\mathcal A\to\mathcal A$ by 
\begin{equation}\label{twist-Hom-anti-asso}
\mu_\alpha(x,y)=\mu(\alpha(x),\alpha(y)),\;\forall x,y\in\mathcal A.    
\end{equation}
Then, $(\mathcal A,\mu_\alpha,\alpha)$ is a Hom-anti-associative superalgebra.
\end{prop}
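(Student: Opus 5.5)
The plan is a direct Yau-twisting computation, exactly parallel to the proof of Theorem \ref{dend-twist-thm}. Evenness of $\mu_\alpha$ and $\alpha$ is immediate: $\alpha$ is even and $\mu$ is even, so $\mu_\alpha(\mathcal A_i,\mathcal A_j)=\mu(\alpha(\mathcal A_i),\alpha(\mathcal A_j))\subseteq\mathcal A_{i+j}$. The only thing to verify is therefore that the Hom-anti-associator $\mathfrak{aas}^\alpha_{\mu_\alpha}$ defined in \eqref{exp-H-aas} vanishes.

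For $x,y,z\in\mathcal H(\mathcal A)$ we expand each term using the multiplicativity $\alpha\mu(a,b)=\mu(\alpha(a),\alpha(b))$. The first term becomes
\begin{equation*}
\mu_\alpha(\alpha(x),\mu_\alpha(y,z))=\mu\big(\alpha^2(x),\alpha\mu(\alpha(y),\alpha(z))\big)=\mu\big(\alpha^2(x),\mu(\alpha^2(y),\alpha^2(z))\big).
\end{equation*}
The same manipulation turns the second term into
\begin{equation*}
\mu_\alpha(\mu_\alpha(x,y),\alpha(z))=\mu\big(\mu(\alpha^2(x),\alpha^2(y)),\alpha^2(z)\big).
\end{equation*}

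Adding the two expressions gives
\begin{equation*}
\mathfrak{aas}^\alpha_{\mu_\alpha}(x,y,z)=\mathfrak{aas}_\mu(\alpha^2(x),\alpha^2(y),\alpha^2(z)).
\end{equation*}
This vanishes because $(\mathcal A,\mu)$ is anti-associative. Equivalently, it equals $\alpha^2(\mathfrak{aas}_\mu(x,y,z))=0$.

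As a side remark, $\alpha\mu_\alpha(x,y)=\mu_\alpha(\alpha(x),\alpha(y))$ also holds, so the resulting Hom-anti-associative superalgebra is in fact multiplicative. No step is a real obstacle; the only point needing care is applying the endomorphism property once inside each nested product, so that both terms carry exactly $\alpha^2$ on every argument. No sign factors appear, since no elements are permuted.
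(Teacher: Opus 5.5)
Your proof is correct and follows essentially the same route as the paper. Both expand the two terms of $\mathfrak{aas}^\alpha_{\mu_\alpha}$ using multiplicativity of $\alpha$ so that every argument carries $\alpha^2$, and then conclude by anti-associativity of $\mu$. The paper writes the last step as $\alpha^2\mathfrak{aas}_\mu(x,y,z)=0$; your direct evaluation $\mathfrak{aas}_\mu(\alpha^2(x),\alpha^2(y),\alpha^2(z))=0$ is slightly more economical, since it avoids pulling $\alpha^2$ back out.
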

\begin{proof}
Let $x,y,z\in\mathcal A$. By using Eq. \eqref{cond-noncom-anti-ass} and the fact that $\alpha$ is an algebra endomorphism, we have
\begin{align*}
\mathfrak{aas}_{\mu_\alpha}^\alpha(x,y,z)&=\mu_\alpha(\alpha(x),\mu_\alpha(y,z))+ \mu_\alpha(\mu_\alpha(x,y),\alpha(z))\\&= \mu(\alpha^2(x),\alpha\mu(\alpha(y),\alpha(z)))+\mu(\alpha\mu(\alpha(x),\alpha(y)),\alpha^2(z))\\&=\alpha^2\big(\mu(x,\mu(y,z))+\mu(\mu(x,y),z)\big)\\&=\alpha^2\mathfrak{aas}_\mu(x,y,z)\\&=0,  
\end{align*}
which gives the proof.
\end{proof}
\begin{exa}
Let $\mathcal{A}=< e_0,e_1,e_2>$ be a $3$-dimensional $\mathbb{Z}_2$- graded vector space where $\mathcal{A}_{\overline{0}}=<e_0>$
 and $\mathcal{A}_{\overline{1}}=<e_1,e_2>$. Define the bilinear map
$\mu :\mathcal{A}\times \mathcal{A}\rightarrow \mathcal{A}$ by 
$$ \mu(e_1,e_1)=e_0 ,\;\mu(e_0,e_1)=e_2,\mu(e_1,e_0)=-e_2.$$
Then $(\mathcal{A},\mu)$ is an anti-associative superalgebra. 
Define the linear map $\alpha:\mathcal{A}\rightarrow\mathcal{A}$ by $$\alpha(e_0)=e_0,\;\alpha(e_1)=-e_1\;\text{and}\;\alpha(e_2)=-e_2.$$
Then, by Proposition \ref{Prop-twis-Hom-ant-ass}, the triplet  $(\mathcal{A},\mu_\alpha,\alpha)$ define a Hom-anti-associative superalgebra.
\end{exa}
\begin{exa}\label{ex-twis-H-ant-ass}
Let $\mathfrak{osp}(1,2)=\mathcal A_{\overline{0}} \oplus\mathcal A_{\overline{1}}$  \ be  the superspace where
$\mathcal A_{\overline 0}$ is generated by:
$$ H=\left(
  \begin{array}{ccc}
  1 & 0& 0 \\
  0 &0 & 0 \\
    0 & 0 & -1\\
  \end{array}\right), \ \ X=\left(
  \begin{array}{ccc}
  0 & 0 & 1 \\
  0 & 0 & 0 \\
  0 & 0 & 0\\
  \end{array}
\right),\ \ Y=\left(
  \begin{array}{ccc}
  0 & 0& 0 \\
  0 & 0 & 0 \\
  1 & 0 & 0\\
  \end{array}
\right), $$ and $\mathcal A_{\overline 1}$ is generated by
$$ F=\left(
  \begin{array}{ccc}
  0 & 0 & 0 \\
  1 & 0 & 0 \\
  0 & 1 & 0\\
  \end{array}
\right) , \ \ G=\left(
  \begin{array}{ccc}
  0 & 1& 0 \\
  0 & 0 & -1 \\
  0 & 0 & 0\\
  \end{array}
\right) .$$

Define a bilinear map
\[
\mu:\mathfrak{osp}(1|2)\times\mathfrak{osp}(1|2)\to\mathfrak{osp}(1|2)
\]
by the following nonzero products:
\[
\begin{aligned}
\mu(H,H) &=X,\;\;
\mu(F,G) &=-\mu(G,F)= Y.
\end{aligned}
\]
All other products are zero. Then $(\mathfrak{osp}(1|2),\mu)$ is an anti-associative superalgebra.\\
Define a linear map
$$\alpha:\mathfrak{osp}(1|2)\to\mathfrak{osp}(1|2)$$ on the homogeneous basis by
$$
\alpha(H) =2H,\;\;\alpha(X)=4X,\;\;\alpha(Y)=3Y,\;\;\alpha(F)=F,\;\;\alpha(G)=3G.
$$
Then, by Proposition \ref{Prop-twis-Hom-ant-ass}, the triplet $(\mathfrak{osp}(1|2),\mu_\alpha,\alpha)$ is a Hom-anti-associative superalgebra.
   
\end{exa}
\subsection{Hom-anti-dendriform superalgebras}
\begin{df}
 A \textbf{Hom-anti-dendriform superalgebra} is a quadruplet $(\mathcal A,\prec,\succ,\alpha)$ consisting of a $\mathbb Z_2$-graded vector space $\mathcal A$, two even bilinear operations $\prec,\succ:\mathcal A\times\mathcal A\to\mathcal A$ and an even linear map $\alpha:\mathcal A\to\mathcal A$ satisfying
 \begin{eqnarray}
\alpha(x)\prec(y\prec z)=-(x\cdot y)\prec  \alpha(z)&=&- \alpha(x)\succ(y\cdot z)=(x\succ y)\succ \alpha(z),\label{cond-Hom-anti-dend1}\\
(x\prec y)\succ \alpha(z)&=&\alpha(x)\prec(y\succ z),\label{cond-Hom-anti-dend2}
 \end{eqnarray}
  where $x\cdot y=x\prec y+x\succ y$, for all $x,y,z\in\mathcal A$.
\end{df}
\begin{exa}

Let $\mathcal{A} = \mathcal{A}_{\overline{0}}\oplus \mathcal{A}_{\overline{1}}$ be a $3$-dimensional superspace, where $\mathcal{A}_{\overline{0}}=<e_0>$and $\mathcal{A}_{\overline{1}}=<e_2,e_3>$.
We define an even bilinear map $'\circ':\mathcal{A}\times\mathcal{A}\to\mathcal{A}$ by
$$\alpha(e_0)=-e_0\;,\;\alpha(e_1)=-2e_1\;,\;\alpha(e_2)=e_2,$$
and the following nonzero products
$$e_1\prec e_1=-2e_0\;,\; e_1\prec e_0=-\frac{1}{2}e_2\;,\; e_0\prec e_1=e_2 $$
$$ e_1\succ e_1 =-2e_0\;,\; e_0\succ e_1=-\frac{1}{2}e_2\;,\; e_1\succ e_0=e_2$$

\end{exa}
\begin{rem}
If $\alpha=id$, we recover anti-dendriform superalgebras structures.    
\end{rem}
\begin{prop}\label{twist-dendr}
Let $(\mathcal A,\prec,\succ)$ be an anti-dendriform superalgebra and $\alpha:\mathcal A\to\mathcal A$ be an algebra morphism. Then, the quadruplet $(\mathcal A,\prec_\alpha,\succ_\alpha,\alpha)$, where $\prec_\alpha,\succ_\alpha$ are defined by 
 \begin{align}
x\prec_\alpha y&=\alpha(x)\prec\alpha(y), \label{twist-anti-dendriform1}\\  
x\succ_\alpha y&=\alpha(x)\succ\alpha(y), \label{twist-anti-dendriform2}
 \end{align}
 is a multiplicative Hom-anti-dendriform superalgebra.
\end{prop}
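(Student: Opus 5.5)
The plan is to copy the pattern of the proof of Theorem \ref{dend-twist-thm} and Proposition \ref{Prop-twis-Hom-ant-ass}. Here ``algebra morphism'' is read as an even linear map with
\[
\alpha(x\prec y)=\alpha(x)\prec\alpha(y)\quad\text{and}\quad \alpha(x\succ y)=\alpha(x)\succ\alpha(y)\quad\text{for all } x,y\in\mathcal H(\mathcal A).
\]
Consequently $\alpha(x\cdot y)=\alpha(x)\cdot\alpha(y)$ for $x\cdot y=x\prec y+x\succ y$. Evenness of $\alpha$ ensures that $\prec_\alpha,\succ_\alpha$ are even bilinear maps, so no sign issues arise.

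\textbf{Multiplicativity.} This is immediate:
\[
\alpha(x\prec_\alpha y)=\alpha(\alpha(x)\prec\alpha(y))=\alpha^2(x)\prec\alpha^2(y)=\alpha(x)\prec_\alpha\alpha(y),
\]
and the same computation works for $\succ_\alpha$.

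\textbf{Key identity.} For any product built from two operations and an $\alpha$ on the outside variable, the morphism property pulls everything out as an $\alpha^2$. For example,
\[
\alpha(x)\prec_\alpha(y\prec_\alpha z)=\alpha^2(x)\prec\alpha(\alpha(y)\prec\alpha(z))=\alpha^2(x)\prec(\alpha^2(y)\prec\alpha^2(z))=\alpha^2\big(x\prec(y\prec z)\big).
\]
In the same way, $(x\cdot_\alpha y)\prec_\alpha\alpha(z)=\alpha^2((x\cdot y)\prec z)$, where $x\cdot_\alpha y=x\prec_\alpha y+x\succ_\alpha y=\alpha(x)\cdot\alpha(y)$. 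Likewise $\alpha(x)\succ_\alpha(y\cdot_\alpha z)=\alpha^2(x\succ(y\cdot z))$ and $(x\succ_\alpha y)\succ_\alpha\alpha(z)=\alpha^2((x\succ y)\succ z)$. For the second axiom the same manipulation gives $(x\prec_\alpha y)\succ_\alpha\alpha(z)=\alpha^2((x\prec y)\succ z)$ and $\alpha(x)\prec_\alpha(y\succ_\alpha z)=\alpha^2(x\prec(y\succ z))$.

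\textbf{Verifying the axioms.} Apply $\alpha^2$ to the chain of equalities defining an anti-dendriform superalgebra, that is \eqref{cond-Hom-anti-dend1} and \eqref{cond-Hom-anti-dend2} with $\alpha=id$. By linearity of $\alpha^2$ the signs $-1$ are preserved, so every term of the chain transforms into the corresponding twisted term. This yields \eqref{cond-Hom-anti-dend1} for $(\prec_\alpha,\succ_\alpha,\alpha)$, and \eqref{cond-Hom-anti-dend2} follows in the same way.

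There is no real obstacle. The only point needing care is the step $\alpha(\alpha(y)\cdot\alpha(z))=\alpha^2(y)\cdot\alpha^2(z)$ for the sum operation, which holds because $\alpha$ preserves both $\prec$ and $\succ$ separately.
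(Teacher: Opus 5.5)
Your proposal is correct and follows the paper's proof essentially verbatim. You show that each twisted composite equals $\alpha^2$ applied to the corresponding untwisted composite, then apply $\alpha^2$ to the anti-dendriform identities (the case $\alpha=id$ of \eqref{cond-Hom-anti-dend1} and \eqref{cond-Hom-anti-dend2}), and you check multiplicativity directly. Your line $\alpha(\alpha(x)\prec\alpha(y))=\alpha^2(x)\prec\alpha^2(y)$ is in fact the correct form of the paper's multiplicativity computation, which contains a stray $\alpha^2$.
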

\begin{proof}
 Let $(\mathcal A,\prec,\succ)$ be an anti-dendriform superalgebra. Then, for any homogeneous elements $x,y,z\in\mathcal A$, we have
 \begin{align*}  \alpha(x)\prec_\alpha(y\prec_\alpha z)=\alpha^2(x\prec(y\prec z))=\alpha^2\big(-(x\cdot y)\prec z)\big)=\alpha^2(- x\succ(y\cdot z)\big)=\alpha^2((x\succ y)\succ z)\big),   
 \end{align*}
 which gives that,
$$\alpha(x)\prec_\alpha(y\prec_\alpha z)=-(x\cdot_\alpha y)\prec_\alpha  \alpha(z)=- \alpha(x)\succ_\alpha(y\cdot_\alpha z)=(x\succ_\alpha y)\succ_\alpha \alpha(z).$$
Then, condition \eqref{cond-Hom-anti-dend1} is satisfied. By the same way, we can show that condition \eqref{cond-Hom-anti-dend2} is satisfied.\\
It is easy to see that
$$\alpha(x\prec_\alpha y)=\alpha^2(\alpha(x)\prec\alpha(y))=\alpha(x)\prec_\alpha\alpha(y),$$
and $$\alpha(x\succ_\alpha y)=\alpha^2(\alpha(x)\succ\alpha(y))=\alpha(x)\succ_\alpha\alpha(y).$$
Then, $(\mathcal A,\prec_\alpha,\succ_\alpha,\alpha)$ is a multiplicative Hom-anti-dendriform superalgebra.
\end{proof}
\begin{thm}
Let $(\mathcal A,\prec,\succ,\alpha)$ be a Hom-anti-dendriform algebra.
Define the bilinear map $'\cdot': \mathcal A \times\mathcal A \to\mathcal A $ by 
\begin{align*}
x\cdot y = x\prec y + x\succ y 
\end{align*}
Then $(\mathcal A,\cdot,\alpha)$ is a Hom-associative superalgebra with a representation $(\mathcal A,-\mathfrak L_{\prec},-\mathfrak R_{\succ},\alpha)$ where $\mathfrak L_{\prec},\mathfrak R_{\succ}:\mathcal A \to gl(\mathcal A)$ are even linear maps defined by 
\begin{align}  
\mathfrak L_{\prec}(x)y&=x \prec y,\label{rep-com-ass-ant-dend1}\\
\mathfrak R_{\succ}(x)y&=(-1)^{|x||y|}y\succ x.\label{rep-com-ass-ant-dend2}
\end{align}
\end{thm}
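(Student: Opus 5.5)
The plan is to verify the two parts of the statement directly from the defining identities \eqref{cond-Hom-anti-dend1} and \eqref{cond-Hom-anti-dend2}. First I prove Hom-associativity of $\cdot$. Then I check the representation axioms \eqref{rAs1}--\eqref{rAs4} for $\mathfrak l=-\mathfrak L_\prec$, $\mathfrak r=-\mathfrak R_\succ$ and $\beta=\alpha$. Throughout, $x,y,z$ are homogeneous.

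\textbf{Hom-associativity.} Expand
\[
(x\cdot y)\cdot\alpha(z)=(x\cdot y)\prec\alpha(z)+(x\cdot y)\succ\alpha(z),\qquad
\alpha(x)\cdot(y\cdot z)=\alpha(x)\prec(y\cdot z)+\alpha(x)\succ(y\cdot z),
\]
and match the terms crosswise.
\begin{itemize}
\item The middle equality of \eqref{cond-Hom-anti-dend1}, $-(x\cdot y)\prec\alpha(z)=-\alpha(x)\succ(y\cdot z)$, identifies $(x\cdot y)\prec\alpha(z)$ with $\alpha(x)\succ(y\cdot z)$.
\item For the remaining pair, split $(x\cdot y)\succ\alpha(z)=(x\prec y)\succ\alpha(z)+(x\succ y)\succ\alpha(z)$.
\item By \eqref{cond-Hom-anti-dend2} the first summand equals $\alpha(x)\prec(y\succ z)$.
\item By the outer equality of \eqref{cond-Hom-anti-dend1} the second summand equals $\alpha(x)\prec(y\prec z)$.
\item Adding these gives $\alpha(x)\prec(y\cdot z)$, so $\mathfrak{ass}^\alpha_\cdot=0$.
\end{itemize}

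\textbf{Representation axioms.} Apply each side to a homogeneous $z$ and unwind the signs coming from \eqref{rep-com-ass-ant-dend2}.
\begin{itemize}
\item For \eqref{rAs2}, the identity $-(x\cdot y)\prec\alpha(z)=\alpha(x)\prec(y\prec z)$ is exactly the first equality of \eqref{cond-Hom-anti-dend1}. The two minus signs on the right-hand side cancel.
\item For \eqref{rAs3}, the left side is $-(-1)^{|z|(|x|+|y|)}\alpha(z)\succ(x\cdot y)$. The right side is $(-1)^{|x||y|}(-1)^{|y|(|x|+|z|)}(-1)^{|x||z|}(z\succ x)\succ\alpha(y)=(-1)^{|z|(|x|+|y|)}(z\succ x)\succ\alpha(y)$. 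These agree by the last equality of \eqref{cond-Hom-anti-dend1}, applied to the triple $(z,x,y)$.
\item For \eqref{rAs4}, after cancelling the two minus signs, the left side is $(-1)^{|y||z|}\alpha(x)\prec(z\succ y)$. The right side is $(-1)^{|y||z|}(x\prec z)\succ\alpha(y)$. These agree by \eqref{cond-Hom-anti-dend2}.
\end{itemize}

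\textbf{Main obstacle: axiom \eqref{rAs1}.} This axiom requires $\alpha(x\prec y)=\alpha(x)\prec\alpha(y)$ and $\alpha(y\succ x)=\alpha(y)\succ\alpha(x)$. These identities do not follow from \eqref{cond-Hom-anti-dend1}--\eqref{cond-Hom-anti-dend2} alone. The proof therefore has to assume that $(\mathcal A,\prec,\succ,\alpha)$ is multiplicative, exactly as in the twisted examples of Proposition~\ref{twist-dendr}. Under that hypothesis \eqref{rAs1} holds immediately, and I will state this assumption explicitly.

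Apart from this, the only care needed is in the sign bookkeeping for \eqref{rAs3}, which I will track exponent by exponent as above.
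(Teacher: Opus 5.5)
Your proposal is correct and follows the paper's proof. Both are direct verifications from \eqref{cond-Hom-anti-dend1}--\eqref{cond-Hom-anti-dend2}; your crosswise pairing for Hom-associativity is a tidier arrangement of the identities the paper chains together, and your sign bookkeeping for \eqref{rAs2}--\eqref{rAs4} agrees with the paper's.

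Your caveat about \eqref{rAs1} is justified, and it marks a real deficiency in the paper, which simply says \eqref{rAs1} is ``clearly satisfied''. Take $\mathcal A=\mathcal A_{\overline 0}=\langle e_1,e_2\rangle$ with $e_1\prec e_1=e_2$, all other products zero, and $\alpha=2\,\mathrm{id}$. Every term in \eqref{cond-Hom-anti-dend1}--\eqref{cond-Hom-anti-dend2} vanishes, yet $\alpha(e_1\prec e_1)=2e_2\neq 4e_2=\alpha(e_1)\prec\alpha(e_1)$. So the multiplicativity hypothesis you add is genuinely needed for the representation claim, while Hom-associativity of $\cdot$ holds without it.
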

\begin{proof}
Let $x,y,z\in\mathcal A$, then by using Eqs. \eqref{cond-Hom-anti-dend1} and \eqref{cond-Hom-anti-dend2} we have:
\begin{align*}
\alpha(x)\cdot(y\cdot z)&=\alpha(x)\prec(y\prec z +y\succ z )
+\alpha(x)\succ(y\prec z+y\succ z)\\
&=(x\succ y)\succ\alpha(z)+(x\prec y)\succ\alpha(z)+\alpha(x)\succ(y\prec z)+\alpha(x)\succ(y\succ z)\\
&=(x\succ y)\succ\alpha(z)+(x\prec y)\succ\alpha(z)+\alpha(x)\succ(y\prec z)+\alpha(x)\succ(y\succ z) \\
&=(x\cdot y)\succ\alpha(z)-\alpha(x)\succ(y\succ z)-\alpha(x)\prec(y\prec z)+\alpha(x)\succ(y\succ z)\\&=(x\cdot y)\succ\alpha(z)-\alpha(x)\prec(y\prec z)\\&=(x\cdot y)\succ\alpha(z)+(x\cdot y)\prec\alpha(z)\\&=(x\cdot y)\cdot\alpha(z).
\end{align*}
Then, $(\mathcal A,\cdot,\alpha)$ is a Hom-associative superalgebra. It remains to show that $(\mathcal A,-\mathfrak L_\prec,-\mathfrak R_\succ ,\alpha)$ is a representation of $\mathcal A$.\\
The conditions of Eq. \eqref{rAs1} are clearly satisfied.
For any $x,y,z\in\mathcal H(\mathcal A)$, we have:
\begin{align*}
-\mathfrak L_{\prec}(x\cdot y)\alpha(z)&=-(x\cdot y)\prec \alpha(z) \\&\stackrel{\eqref{cond-Hom-anti-dend1}}{=}\alpha(x)\prec(y\prec z) \\
&=\mathfrak L_{\prec}(\alpha(x)) \mathfrak L_{\prec}(y)(z).
 \end{align*}
 Then $$-\mathfrak L_\prec(x\cdot y)\alpha=-\mathfrak L_\prec(\alpha(x))\big(-\mathfrak L_\prec(y)\big),$$
 which implies that condition \eqref{rAs2} is satisfied.
\begin{align*}
-\mathfrak R_\succ(x\cdot y)\alpha(z)&=-(-1)^{|z|(|x|+|y|)}\alpha(z)\succ(x\cdot y)\\&= -(-1)^{|z|(|x|+|y|)}\alpha(z)\succ(x\cdot y)&\\&\stackrel{\eqref{cond-Hom-anti-dend1}}{=}(-1)^{|z|(|x|+|y|)}(z\succ x)\succ\alpha(y)\\&= -(-1)^{|x||y|}\mathfrak R_\succ(\alpha(y))\big(-\mathfrak R_\succ(x)(z)\big).  
\end{align*}
Then, condition \eqref{rAs3} is satisfied. Similarly, we can show condition \eqref{rAs4} and therefore $(\mathcal A,-\mathfrak L_\prec,-\mathfrak R_\succ,\alpha)$ is a representation of $(\mathcal A,\cdot,\alpha)$. 
\end{proof}

\begin{df}
Let $(\mathcal A,\mu,\alpha)$ be a Hom-associative  superalgebra and $(V,\mathfrak l,\mathfrak r,\beta)$ be a bimodule. An even linear map $T:V\to\mathcal A$ is called an \textbf{anti-super-$\mathcal O$-operator} on $(\mathcal A,\mu,\alpha)$ associate with $(V,\mathfrak l,\mathfrak r,\beta)$ if the following equations holds:
\begin{eqnarray}
T\circ\beta&=&\alpha\circ T,\label{cond-ant-O-oper-ncomm-Hom-ass1} \\
\mu( T(u),T(v))&=&-T\big(\mathfrak l(T(u))v+(-1)^{|u||v|}\mathfrak r(T(v))u\big),\;\forall u,v\in\mathcal H(V).\label{cond-ant-O-oper-ncomm-Hom-ass2}
\end{eqnarray}
Furthermore, $T$ is called \textbf{strong} if
\begin{equation}\label{strong-ant-O-oper-ncomm-Hom-ass}
\mathfrak l(\mu(T(u),T(v)))\beta(w)= (-1)^{|u|(|v|+|w|)}\mathfrak r(\mu(T(v),T(w)))\beta(u),\;\forall u,v,w\in\mathcal{H}(V).   
\end{equation}
In particular, a super-anti-$\mathcal O$-operator $\mathcal R$ of $(\mathcal A,\mu,\alpha)$ associated with the bimodule $(\mathcal A,\mathfrak L,\mathcal R,\alpha)$
is called an \textbf{anti-Rota–Baxter operator (of weight zero)}; that is, $\mathcal R :\mathcal A\to\mathcal A $ is an even linear map commuting with $\alpha$ and satisfying
\begin{equation}\label{ant-RB-oper-noncomm-ass}
 \mu(\mathcal R(x),\mathcal R(y))=-\mathcal R\big(\mu(\mathcal R(x),y)+\mu(x,\mathcal R(y))\big),\;\forall x,y\in\mathcal A.   
\end{equation}
An anti-Rota–Baxter operator $\mathcal R$ is called \textbf{strong} if $\mathcal R$ satisfies
\begin{equation}\label{strong-ant-RB-oper-noncomm-ass}
\mu(\mu(\mathcal R(x),\mathcal R(y)),\alpha(z))=\mu(\alpha(x),\mu(\mathcal R(y),\mathcal R(z)),\;\forall x,y,z\in\mathcal A.    
\end{equation}
\end{df} 

\begin{prop}\label{Hom-ant-dend-by-ant-O-oper}
Let $T:V\rightarrow\mathcal A$ be an anti-super-$\mathcal O$-operator on a Hom-associative superalgebra $(\mathcal A,\mu,\alpha)$ with respect to a representation $(V,\mathfrak l,\mathfrak r,\beta)$. Let us define two binary operations $\prec_T,\succ_T:V\otimes V\to V$ by:
\begin{align}
u \prec_T v&=-\mathfrak l(T(u))v,\label{H-anti-dend-from-O-oper-H-ass1} \\
u \succ_T v&=-(-1)^{|u||v|} \mathfrak r(T(v))u,\label{H-anti-dend-from-O-oper-H-ass2}  
\end{align}
for any $u,v\in\mathcal H(V)$. Then $(V,\prec_T,\succ_T,\beta)$ is a Hom-anti-dendriform superalgerbra if and only if $T$ is strong. In this case $T$ is a homomorphism of Hom-associative superalgebras from $(V,\cdot_T,\beta)$ to $(\mathcal A,\mu,\alpha)$. Furthermore, there is an induced Hom-anti-dendriform superalgebra structure on
$T(V)=\{T(u),\;u\in V\}\subseteq A$ given by
\begin{equation}\label{ant-Hom-dend-T(V)}
T(u)\prec T(v)=T(u\prec_Tv),\;\;\;T(u)\succ T(v)=T(u\succ_Tv),\;\forall u,v\in\mathcal H(V).
\end{equation}
\end{prop}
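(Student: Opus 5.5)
We verify the defining identities of a Hom-anti-dendriform superalgebra for $(V,\prec_T,\succ_T,\beta)$ directly, rewriting every product through the representation maps $\mathfrak l,\mathfrak r$ and the operator identity \eqref{cond-ant-O-oper-ncomm-Hom-ass2}. The first step is to record the key formula
\[
T(u\cdot_T v)=T(u\prec_T v+u\succ_T v)=-T\big(\mathfrak l(T(u))v+(-1)^{|u||v|}\mathfrak r(T(v))u\big)=\mu(T(u),T(v)),
\]
which, together with $T\beta=\alpha T$, will repeatedly convert expressions $\mathfrak l(T(u\cdot_T v))$ and $\mathfrak r(T(u\cdot_T v))$ into $\mathfrak l(\mu(T(u),T(v)))$ and $\mathfrak r(\mu(T(u),T(v)))$. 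Once the equivalence is established, this same formula, with $T\beta=\alpha T$, shows that $T$ is a morphism of Hom-associative superalgebras from $(V,\cdot_T,\beta)$ to $(\mathcal A,\mu,\alpha)$. Here $(V,\cdot_T,\beta)$ is Hom-associative by the preceding theorem applied to the Hom-anti-dendriform structure.

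Next we expand each of the four expressions in \eqref{cond-Hom-anti-dend1} for $u,v,w\in\mathcal H(V)$:
\begin{align*}
\beta(u)\prec_T(v\prec_T w)&=\mathfrak l(\alpha T(u))\mathfrak l(T(v))w,\\
-(u\cdot_T v)\prec_T\beta(w)&=\mathfrak l(\mu(T(u),T(v)))\beta(w),\\
-\beta(u)\succ_T(v\cdot_T w)&=(-1)^{|u|(|v|+|w|)}\mathfrak r(\mu(T(v),T(w)))\beta(u),\\
(u\succ_T v)\succ_T\beta(w)&=(-1)^{|u|(|v|+|w|)+|v||w|}\mathfrak r(\alpha T(w))\mathfrak r(T(v))u.
\end{align*}
Using \eqref{rAs2}, the first expression equals $\mathfrak l(\mu(T(u),T(v)))\beta(w)$, so the first two agree automatically. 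Using \eqref{rAs3} with the correct sign, the last two agree automatically. Condition \eqref{cond-Hom-anti-dend2} reads $(-1)^{|v||w|}\mathfrak r(\alpha T(w))\mathfrak l(T(u))v=(-1)^{|v||w|}\mathfrak l(\alpha T(u))\mathfrak r(T(w))v$ up to the common sign, and this follows from \eqref{rAs4}. Hence all identities hold for free except the middle equality in \eqref{cond-Hom-anti-dend1}, that is, the equality of the second and third expressions. This is exactly \eqref{strong-ant-O-oper-ncomm-Hom-ass}, which gives both directions of the equivalence.

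The main obstacle is the sign bookkeeping, especially in the third and fourth expressions, where the Koszul signs from $\succ_T$ must be matched with those in \eqref{rAs3} and \eqref{strong-ant-O-oper-ncomm-Hom-ass}. We handle it by computing the parity exponent explicitly in each step. For the last claim, on $T(V)$ we define the operations by \eqref{ant-Hom-dend-T(V)}, with twisting map $\alpha|_{T(V)}$; note that $\alpha(T(V))=T(\beta(V))\subseteq T(V)$. Well-definedness needs the operations to be independent of the choice of preimage. We would show that $\ker T$ is an ideal for $\prec_T,\succ_T$: if $T(u)=0$ then $u\prec_T v=0$ directly, and $T(v\prec_T u)=-T(\mathfrak l(T(v))u)$ should vanish by \eqref{cond-ant-O-oper-ncomm-Hom-ass2}, which gives $-T(\mathfrak l(T(v))u)=\mu(T(v),0)+T((-1)^{|u||v|}\mathfrak r(0)v)=0$; the $\succ_T$ cases are similar. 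The identities on $T(V)$ then follow by applying $T$ to those on $V$.
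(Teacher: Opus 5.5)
Your proposal is correct and follows the paper's route. You rewrite the four members of \eqref{cond-Hom-anti-dend1} through $T\beta=\alpha T$ and $T(u\cdot_T v)=\mu(T(u),T(v))$, match the outer pairs by \eqref{rAs2} and \eqref{rAs3}, and identify the middle equality with the strong condition. You also go a little further than the paper: you check \eqref{cond-Hom-anti-dend2} via \eqref{rAs4}, and you justify that the structure on $T(V)$ is well defined using $\ker T$.

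One cosmetic slip: in your rewritten form of \eqref{cond-Hom-anti-dend2}, the left-hand side should carry the sign $(-1)^{(|u|+|v|)|w|}$ rather than $(-1)^{|v||w|}$. The extra factor $(-1)^{|u||w|}$ is exactly the one supplied by \eqref{rAs4}.
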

\begin{proof}
Let $u,v,w\in\mathcal H(V)$. Then, by using the fact that $T$ is an anti-super-$\mathcal O$-operator on $(\mathcal A,\mu,\alpha)$ with respect to $(V,\mathfrak l,\mathfrak r,\beta)$ we have:
\begin{align*}
\beta(u) \prec_T(v\prec_T w)&= -\mathfrak l(T(\beta(u)))(v\prec_T w) \\
&=\mathfrak l(\alpha T(u)) \mathfrak l(T(v))w \\
&=\mathfrak l(\mu(T(u),T(v)))\beta(w) \\
&=-\mathfrak l(T\big(\mathfrak l(T(u))v+(-1)^{|u||v|}
\mathfrak r(T(v))u\big))\beta(w) \\
&=-(u\prec_T v+u\succ_T v)\prec_T\beta(w)\\&=-(u\cdot_Tv)\prec_T\beta(w).
\end{align*}
By the same way, we have
\begin{align*}
(u\succ_T v) \succ_T \beta(w)&=-(-1)^{|w|(|u|+|v|)} \mathfrak r(T(\beta(w))(u\succ_T v) \\
&=(-1)^{|w|(|u|+|v|)+|u||v|}\mathfrak r(\alpha T(w))\mathfrak r(T(v))u\\&\stackrel{\eqref{rAs3}}{=} (-1)^{|u|(|v|+|w|)}\mathfrak r\Big(\mu\big(T(v),T(w)\big)\Big)\beta(u)\\&=-(-1)^{|u|(|v|+|w|)}\mathfrak r\Big(T\big(\mathfrak l(T(v))w+(-1)^{|v||w|}\mathfrak r(T(w))v\big)\Big)\beta(u)\\&=(-1)^{|u|(|v|+|w|)}\mathfrak r\Big(T\big(v\prec_T w+(-1)^{|v||w|} w\succ_T v\big)\Big)\beta(u)\\&=-\beta(u
)\succ_T(v\prec_T w+(-1)^{|v||w|} w\succ_T v)\\&=-\beta(u)\succ_T(v\cdot_Tw).
\end{align*}
Then condition \eqref{cond-Hom-anti-dend1} is satisfied if and only if 
$$(u\cdot_Tv)\prec_T\beta(w)=\beta(u)\succ_T(v\cdot_Tw),$$
which gives that $$\mathfrak l(\mu(T(u),T(v)))\beta(w)=(-1)^{|u|(|v|+|w|)}\mathfrak r\big(\mu(T(v),T(w))\big)\beta(u),\;\forall u,v\in\mathcal H(V),$$
that is, $T$ is strong.

Similarly we can show that condition \eqref{cond-Hom-anti-dend2} is satisfied on $V$ which gives the Proof.
\end{proof}
\begin{cor}
Let $\mathcal R$ be a strong anti-Rota-Baxter operator of weight zero on an associative superalgebra  $(\mathcal A,\mu,\alpha)$. Then, $(\mathcal A,\prec_\mathcal R,\succ_\mathcal R,\alpha)$ where $\prec_\mathcal R,\succ_\mathcal R:\mathcal A\to\mathcal A$ are defined by 
\begin{align}
 x\prec_\mathcal R y&=-\mu(\mathcal R(x),y),\label{H-anti-dend-From-ARB-Hass1} \\
 x\succ_\mathcal R y&=-\mu(x,\mathcal R(y)),\label{H-anti-dend-From-ARB-Hass2}
\end{align}
for any $x,y\in\mathcal H(\mathcal A)$ is a Hom-anti-dendriform superalgebra.
\end{cor}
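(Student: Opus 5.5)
The corollary is the special case $V=\mathcal A$, $T=\mathcal R$, $\beta=\alpha$ of Proposition \ref{Hom-ant-dend-by-ant-O-oper}, taken with respect to the adjoint representation $(\mathcal A,\mathfrak L,\mathcal R,\alpha)$ of the Hom-associative superalgebra $(\mathcal A,\mu,\alpha)$ given in \eqref{adj-rep-H-ass-sup}. The plan has three steps: check that $\mathcal R$ fits the hypotheses of that proposition, check that the operations \eqref{H-anti-dend-From-ARB-Hass1}--\eqref{H-anti-dend-From-ARB-Hass2} are exactly $\prec_T,\succ_T$, and check that strongness \eqref{strong-ant-RB-oper-noncomm-ass} implies strongness \eqref{strong-ant-O-oper-ncomm-Hom-ass}. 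The equivalence in the proposition then gives the conclusion.

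First, by definition an anti-Rota--Baxter operator is an anti-super-$\mathcal O$-operator associated with the adjoint bimodule. The condition $\mathcal R\alpha=\alpha\mathcal R$ is \eqref{cond-ant-O-oper-ncomm-Hom-ass1}. Next compute $\mathfrak L(\mathcal R(x))y=\mu(\mathcal R(x),y)$ and $(-1)^{|x||y|}\mathcal R(\mathcal R(y))x=\mu(x,\mathcal R(y))$, where the second $\mathcal R$ denotes the right multiplication operator. These are the two terms inside \eqref{ant-RB-oper-noncomm-ass}, so that identity is \eqref{cond-ant-O-oper-ncomm-Hom-ass2}. The same two computations show
\[
x\prec_T y=-\mathfrak L(\mathcal R(x))y=-\mu(\mathcal R(x),y)
\quad\text{and}\quad
x\succ_T y=-(-1)^{|x||y|}\mathcal R(\mathcal R(y))x=-\mu(x,\mathcal R(y)),
\]
which are precisely \eqref{H-anti-dend-From-ARB-Hass1} and \eqref{H-anti-dend-From-ARB-Hass2}.

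The step needing care is translating strongness; the only delicate point is the signs. In the adjoint case, the left side of \eqref{strong-ant-O-oper-ncomm-Hom-ass} is
\[
\mathfrak L(\mu(\mathcal R(x),\mathcal R(y)))\alpha(z)=\mu(\mu(\mathcal R(x),\mathcal R(y)),\alpha(z)).
\]
The right side is
\[
(-1)^{|x|(|y|+|z|)}\mathcal R(\mu(\mathcal R(y),\mathcal R(z)))\alpha(x)=\mu(\alpha(x),\mu(\mathcal R(y),\mathcal R(z))).
\]
The sign cancels here because the right multiplication operator introduces $(-1)^{|x|(|y|+|z|)}$ exactly, using that $\mathcal R$ and $\alpha$ are even. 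Hence \eqref{strong-ant-O-oper-ncomm-Hom-ass} coincides with \eqref{strong-ant-RB-oper-noncomm-ass}, and $\mathcal R$ is a strong anti-super-$\mathcal O$-operator.

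Applying Proposition \ref{Hom-ant-dend-by-ant-O-oper} now shows that $(\mathcal A,\prec_{\mathcal R},\succ_{\mathcal R},\alpha)$ is a Hom-anti-dendriform superalgebra. (The corollary says "associative superalgebra", but the hypothesis is understood as a Hom-associative superalgebra $(\mathcal A,\mu,\alpha)$, which is what the proposition requires.)
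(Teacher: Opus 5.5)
Your proposal is correct and is exactly the intended argument: the paper gives no separate proof of this corollary, treating it as the specialization of Proposition \ref{Hom-ant-dend-by-ant-O-oper} to $V=\mathcal A$, $T=\mathcal R$, $\beta=\alpha$ and the adjoint representation, and your identifications check out ($\prec_T,\succ_T$ with \eqref{H-anti-dend-From-ARB-Hass1}--\eqref{H-anti-dend-From-ARB-Hass2}, and \eqref{strong-ant-O-oper-ncomm-Hom-ass} with \eqref{strong-ant-RB-oper-noncomm-ass}). You were right to take the adjoint right action to be the signed multiplication $x\mapsto(-1)^{|x||y|}\mu(x,y)$, which is what the axioms \eqref{rAs3}--\eqref{rAs4} require, rather than the sign-free formula literally written in \eqref{adj-rep-H-ass-sup}; with the sign-free formula, the signs in $\succ_{\mathcal R}$ and in the strongness condition would not come out as stated.
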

\begin{lem} 
An invertible anti-super-$\mathcal O$-operator of a Hom-associative superalgebra is automatically
strong.    
\end{lem}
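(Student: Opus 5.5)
The plan is to exploit invertibility of $T$ to transport all computations to $\mathcal A$. We put $x=T(u)$, $y=T(v)$, $z=T(w)$, so that every element of $\mathcal A$ has this form. By \eqref{cond-ant-O-oper-ncomm-Hom-ass1} we also have $\beta=T^{-1}\alpha T$. With this notation the strong condition \eqref{strong-ant-O-oper-ncomm-Hom-ass} reads
\[
\mathfrak l(\mu(x,y))\beta(w)=(-1)^{|u|(|v|+|w|)}\mathfrak r(\mu(y,z))\beta(u).
\]
Since $T$ is injective, it is enough to prove the equality obtained after applying $T$ to both sides.

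The main tool is \eqref{cond-ant-O-oper-ncomm-Hom-ass2}, used with one argument equal to the element $s:=T^{-1}(\mu(x,y))$ and the other equal to $\beta(w)$ (respectively $t:=T^{-1}(\mu(y,z))$ and $\beta(u)$). This gives
\[
\mu(\mu(x,y),\alpha(z))=-T\Big(\mathfrak l(\mu(x,y))\beta(w)+(-1)^{|w|(|u|+|v|)}\mathfrak r(\alpha(z))\,s\Big),
\]
and a symmetric formula for $\mu(\alpha(x),\mu(y,z))$. Next we expand $s=-\big(\mathfrak l(x)v+(-1)^{|u||v|}\mathfrak r(y)u\big)$ and the analogous expression for $t$, again by \eqref{cond-ant-O-oper-ncomm-Hom-ass2}. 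The resulting double-operator terms are rewritten with the representation axioms:
\begin{itemize}
\item \eqref{rAs2} turns $\mathfrak l(\alpha(x))\mathfrak l(y)$ into $\mathfrak l(\mu(x,y))\beta$;
\item \eqref{rAs3} turns $\mathfrak r(\alpha(z))\mathfrak r(y)$ into $\mathfrak r(\mu(y,z))\beta$;
\item \eqref{rAs4} identifies the mixed terms $\mathfrak r(\alpha(z))\mathfrak l(x)v$ and $\mathfrak l(\alpha(x))\mathfrak r(z)v$.
\end{itemize}
Finally, Hom-associativity $\mu(\mu(x,y),\alpha(z))=\mu(\alpha(x),\mu(y,z))$ lets us compare the two expansions and cancel the mixed terms. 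What should remain is the strong identity, with the sign $(-1)^{|u|(|v|+|w|)}$ coming from the graded reorderings.

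The expected obstacle is the sign and coefficient bookkeeping in this comparison. A naive expansion along the lines above produces the same three terms on both sides and collapses to a tautology. If that happens, the argument must be rearranged so that the two sides are no longer symmetric. The fix I would try first is to use the anti-$\mathcal O$ relation \emph{in the opposite direction} on one side. Since $T$ is surjective, $\mathfrak l(\mu(x,y))\beta(w)$ can be written as $T^{-1}$ of an explicit product, namely
\[
\mathfrak l(\mu(x,y))\beta(w)=-T^{-1}\big(\mu(\mu(x,y),\alpha(z))\big)-(-1)^{|w|(|u|+|v|)}\mathfrak r(\alpha(z))\,s,
\]
and similarly for $\mathfrak r(\mu(y,z))\beta(u)$. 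Subtracting the two expressions leaves the difference of $T^{-1}$ of the two associators, which vanishes by Hom-associativity, plus the difference of the correction terms. The task is then to show that the correction terms cancel by \eqref{rAs2}--\eqref{rAs4}. Carefully tracking the minus signs introduced by the anti-$\mathcal O$ condition, which differ from the classical $\mathcal O$-operator case, is where I expect the real work to lie.
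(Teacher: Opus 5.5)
The paper states this lemma without proof, so there is nothing of the authors' to compare with. Judged on its own, your first route is the right one, and once it is carried out it proves the lemma. But the proposal stops before doing the computation, mispredicts its outcome, and so misses the one point that matters. Put $A=\mathfrak l(\mu(x,y))\beta(w)$ and $B=(-1)^{|u|(|v|+|w|)}\mathfrak r(\mu(y,z))\beta(u)$, so the strong condition is $A=B$. Put also $M=(-1)^{|w|(|u|+|v|)}\mathfrak r(\alpha(z))\mathfrak l(x)v$. Perform exactly the rewritings you list: \eqref{rAs3} on the $\mathfrak r\mathfrak r$-term coming from $s$, and \eqref{rAs2}, \eqref{rAs4} on the two terms coming from $t$. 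This gives
\[
\mu(\mu(x,y),\alpha(z))=-T(A-B-M),\qquad \mu(\alpha(x),\mu(y,z))=-T(B-A-M),
\]
so Hom-associativity yields $2\,T(A-B)=0$. There is no tautology, contrary to your worry: the minus sign in \eqref{cond-ant-O-oper-ncomm-Hom-ass2} makes $A$ and $B$ enter the two expansions with opposite relative signs. The collapse you fear is what happens for an ordinary $\mathcal O$-operator, where both sides become $T(A+B+M)$; that is why no strong condition is needed in the dendriform case.

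The step your write-up never mentions is the final division by $2$. The argument needs $\mathrm{char}\,\mathbb F\neq 2$, which holds here since $\mathbb F$ has characteristic $0$, together with injectivity of $T$. Surjectivity is never used: $s=-\big(\mathfrak l(x)v+(-1)^{|u||v|}\mathfrak r(y)u\big)$ and the analogous $t$ can be written down directly, and they satisfy $T(s)=\mu(x,y)$ and $T(t)=\mu(y,z)$ by \eqref{cond-ant-O-oper-ncomm-Hom-ass2}.

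Your fallback should be dropped. It is not a different argument but the same two identities solved for $A$ and for $B$, so it could not break a tautology if there were one. Its stated goal is also circular. The correction terms are $B+M$ and $A+M$, so ``the correction terms cancel'' is literally equivalent to $A=B$, the statement you are trying to prove. Their difference $B-A$ does not vanish a priori. Combined with $T^{-1}\big(\mu(\alpha(x),\mu(y,z))-\mu(\mu(x,y),\alpha(z))\big)=0$, it gives $A-B=-(A-B)$, which is again $2(A-B)=0$.

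A cleaner structural view of the same computation runs as follows. $T$ is an isomorphism from $(V,\cdot_T,\beta)$ onto $(\mathcal A,\mu,\alpha)$, so $\cdot_T$ is Hom-associative. The computations in the proof of Proposition \ref{Hom-ant-dend-by-ant-O-oper} show that the first and last equalities in \eqref{cond-Hom-anti-dend1}, as well as \eqref{cond-Hom-anti-dend2}, hold for every anti-super-$\mathcal O$-operator; only the middle equality of \eqref{cond-Hom-anti-dend1} encodes strongness. Using these, the Hom-associator of $\cdot_T$ at $(u,v,w)$ equals $2\big((u\cdot_Tv)\prec_T\beta(w)-\beta(u)\succ_T(v\cdot_Tw)\big)=-2(A-B)$.
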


\subsection{Hom-anti-pre-Lie superalgebras}

\begin{df}(\cite{Zhao-Liu-Chen})
An \textbf{anti-pre-Lie superalgebra} (also called \textbf{anti-left-symmetric superalgebra}) is a $\mathbb Z_2$-graded vector space $\mathcal{A}$ equipped with an even bilinear map $\circ:\mathcal A\times\mathcal A\to\mathcal A$ such that, for all $x,y,z\in\mathcal H(\mathcal A)$, we have the following conditions   \begin{eqnarray}
 &&x\circ(y\circ z)-(-1)^{|x||y|} y\circ(x\circ z)=(-1)^{|x||y|}[y,x]_\circ \circ z,\label{cond-ant-pre-Lie1}  \\
 &&(-1)^{|x||z|}[x,y]_\circ\circ z+(-1)^{|x||y|}[y,z]_\circ\circ x+(-1)^{|y||z|}[z,x]_\circ\circ y=0\label{cond-ant-pre-Lie2},
\end{eqnarray} 
where 
\begin{equation}\label{comp-Lie-struc}
[x,y]_\circ=x\circ y-(-1)^{|x||y|}y\circ x.
\end{equation}

\end{df}
\begin{rem}\
  Condition \eqref{cond-ant-pre-Lie1} is equivalent to 
\begin{equation}\label{exp-equiv-anti-pre-Lie1}\mathfrak{aas}_\circ(x,y,z)=(-1)^{|x||y|}\mathfrak{aas}_\circ(y,x,z),\;\forall x,y,z\in\mathcal{H}(\mathcal{A}).
\end{equation} 
\end{rem}
\begin{exa}
Let $\mathcal{A} = \mathcal{A}_{\overline{0}}\oplus \mathcal{A}_{\overline{1}}$ be a $3$-dimensional superspace, where $\mathcal{A}_{\overline{0}}=<e_1>$and $\mathcal{A}_{\overline{1}}=<e_2,e_3>$.
We define an even bilinear map $\circ:\mathcal{A}\times\mathcal{A}\to\mathcal{A}$ by
$$e_1\circ e_2=-e_2,\;e_1\circ e_3=e_3,\;e_2\circ e_1=-2e_2$$ 
$$e_3\circ e_1=2e_3,\;e_2\circ e_3=-e_3\circ e_2=\frac{1}{4}e_1.$$
Then, $(\mathcal{A},\circ)$ is an anti-pre-Lie superalgebra
\end{exa}
\begin{df}
A \textbf{Hom-anti-pre-Lie superalgebra} (also called \textbf{Hom-anti-left-symmetric superalgebra}) is a triplet $(\mathcal A,\circ,\alpha)$ consisting of a $\mathbb Z_2$-graded vector space $\mathcal{A}$, an even bilinear map $\circ:\mathcal A\times\mathcal A\to\mathcal A$ and an even linear map $\alpha:\mathcal A\to\mathcal A$ such that, the following conditions hold   \begin{eqnarray}
 &&\alpha(x)\circ(y\circ z)-(-1)^{|x||y|} \alpha(y)\circ(x\circ z)=[y,x]_\circ \circ \alpha(z),\label{cond-Hom-ant-pre-Lie1}  \\
 &&(-1)^{|x||z|}[x,y]_\circ\circ \alpha(z)+(-1)^{|x||y|}[y,z]_\circ\circ \alpha(x)+(-1)^{|y||z|}[z,x]_\circ\circ \alpha(y)=0\label{cond-Hom-ant-pre-Lie2},
\end{eqnarray} 
for any $x,y,z\in\mathcal{H}(\mathcal{A})$, where $[x,y]_\circ=x\circ y-(-1)^{|x||y|}y\circ x$.\\
A Hom-anti-pre-Lie superalgebra $(\mathcal A,\circ,\mu)$ is called \textbf{regular} if $\alpha$ is bijective, and called \textbf{multiplicative} if $\alpha$ is an algebra endomorphism, i.e., $\alpha(x\circ y)=\alpha(x)\circ\alpha(y)$ for any $x,y\in\mathcal A$.

\end{df}
\begin{rem}\
 \begin{enumerate}
\item If $\alpha=id$, we recover \textbf{anti-pre-Lie superalgebras} structures (see \cite{Zhao-Liu-Chen} for more details).
\item Condition \eqref{cond-Hom-ant-pre-Lie1} is equivalent to
\begin{equation}\label{cond-equiv-Hom-anti-pre-Lie1}
\mathfrak{aas}_\circ^\alpha(x,y,z)=(-1)^{|x||y|}\mathfrak{aas}_\circ^\alpha(y,x,z),\;\forall x,y,z\in\mathcal{H}(\mathcal A). 
\end{equation}
 \end{enumerate}   
\end{rem}
\begin{exa}
Let $\mathcal A=\mathcal A_{\overline 0}\oplus \mathcal A_{\overline 1}$ be a
$\mathbb Z_2$-graded vector space defined by
\[
\mathcal A_{\overline 0}=\langle e_1,e_2\rangle,
\qquad
\mathcal A_{\overline 1}=\langle e_3\rangle .
\]
Define an even bilinear map $
\circ:\mathcal A\otimes \mathcal A \longrightarrow \mathcal A
$
by
\[
e_1\circ e_2 = e_3,\qquad
e_2\circ e_1 = -\,e_3,
\]
and all other products are zero.
Let $\alpha:\mathcal A\to\mathcal A$ be the even linear map defined by
\[
\alpha(e_1)=e_1,\qquad
\alpha(e_2)=2e_2,\qquad
\alpha(e_3)=2e_3.
\]
Then  the triplet $(\mathcal A,\circ,\alpha)$
is a $3$-dimensional Hom-anti-pre-Lie superalgebra.
\end{exa}
Recall that $(\mathcal A,\circ,\alpha)$ is called \textbf{Hom-Lie-admissible superalgebra} where $\mathcal A$ is vector superspace with an even bilinear map $\mu:\mathcal A\times\mathcal A\to\mathcal A$ and an even linear map $\alpha:\mathcal A\to\mathcal A$, if the product $[\cdot,\cdot]:\mathcal A\times\mathcal A\to\mathcal A$ defined by Eq. \eqref{comp-Lie-struc} makes $(\mathcal A,[\cdot,\cdot],\alpha)$ a Hom-Lie superalgebra (see \cite{Faouzi-Abdenacer} for more details). We call $(\mathcal A,[\cdot,\cdot],\alpha)$ in this case the \textbf{sub-adjacent Hom-Lie superalgebra} of  $(\mathcal A,\circ,\alpha)$ denoted by $\mathfrak g(\mathcal A)$ and $(\mathcal A,\circ,\alpha)$ is called a \textbf{compatible} (Hom-Lie admissible) superalgebra structure on the Hom-Lie superalgebra $\mathfrak g(\mathcal A)$.
\begin{prop}
Let $\mathcal A$ be a vector superspace with an even bilinear map $\circ:\mathcal A\times\mathcal A\to\mathcal A$ and an even linear map $\alpha:\mathcal A\to\mathcal A$. Then the
following assertions are equivalent: 
\begin{enumerate}
\item $(\mathcal A,\circ,\alpha)$ is a Hom-anti-pre-Lie superalgebra.
\item For $(\mathcal A,\circ,\alpha)$, Eq. \eqref{cond-Hom-ant-pre-Lie1} and for any $x,y,z\in\mathcal H(\mathcal A)$, the following equation hold
\begin{equation}\label{cond-equiv1}
\circlearrowleft_{x,y,z}(-1)^{|x||z|}\alpha(x)\circ[y,z]_\circ=(-1)^{|x||z|}\alpha(x)\circ[y,z]_\circ+ (-1)^{|x||y|}\alpha(y)\circ[z,x]_\circ+(-1)^{|y||z|}\alpha(z)\circ[x,y]_\circ=0.
\end{equation}
\item $(\mathcal A,\circ,\alpha)$ is a Hom-Lie admissible superalgebra, such that $(\mathcal A,-\mathfrak L_\circ,\alpha)$ is a representation of the sub-adjacent Hom-Lie superalgebra $(\mathfrak g(\mathcal A),[\cdot,\cdot]_\circ,\alpha)$, where $\mathfrak L_\circ:\mathfrak g(\mathcal A)\to End(\mathcal A)$ is an even linear map defined by $\mathfrak L_\circ(x)(y)=x\circ y,\;\forall x,y\in\mathcal H(\mathcal A)$.
\end{enumerate}
\end{prop}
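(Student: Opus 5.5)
The plan is to prove $(1)\Leftrightarrow(2)$ and $(1)\Leftrightarrow(3)$ separately. Both directions are pure identity manipulations in the super-commutator $[x,y]_\circ=x\circ y-(-1)^{|x||y|}y\circ x$. Condition \eqref{cond-Hom-ant-pre-Lie1} appears in both (1) and (2), and (3) will turn out to be equivalent to it as well. So the work reduces to comparing \eqref{cond-Hom-ant-pre-Lie2} with \eqref{cond-equiv1} under \eqref{cond-Hom-ant-pre-Lie1}, and \eqref{cond-Hom-ant-pre-Lie2} with the Hom super-Jacobi identity.

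\emph{Step 1: $(1)\Leftrightarrow(2)$.} Assume \eqref{cond-Hom-ant-pre-Lie1}. Super-antisymmetrize it in $y,z$ (fixing $x$ first) and use its cyclic variants. More concretely, rewrite \eqref{cond-Hom-ant-pre-Lie1} as $\alpha(x)\circ(y\circ z)-(-1)^{|x||y|}\alpha(y)\circ(x\circ z)=[y,x]_\circ\circ\alpha(z)$. Take the version with $(x,y,z)\mapsto(z,x,y)$ and subtract the version with $(x,y,z)\mapsto(y,x,z)$, all weighted by the sign factors $(-1)^{|x||z|}$ etc. Summing cyclically, the terms of the form $\alpha(a)\circ(b\circ c)$ regroup into $\sum\circlearrowleft(-1)^{|x||z|}\alpha(x)\circ[y,z]_\circ$. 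The right-hand sides regroup into $-\sum\circlearrowleft(-1)^{|x||z|}[x,y]_\circ\circ\alpha(z)$ or its negative. This gives the identity
\[
\circlearrowleft_{x,y,z}(-1)^{|x||z|}\alpha(x)\circ[y,z]_\circ=\pm\circlearrowleft_{x,y,z}(-1)^{|x||z|}[x,y]_\circ\circ\alpha(z),
\]
valid whenever \eqref{cond-Hom-ant-pre-Lie1} holds. Hence \eqref{cond-Hom-ant-pre-Lie2} holds if and only if \eqref{cond-equiv1} holds. The main care here is the Koszul signs in the cyclic sum. I would check them by verifying the even case first and then attaching $(-1)^{|a||b|}$ each time two elements are swapped.

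\emph{Step 2: $(1)\Leftrightarrow(3)$.} Super-skew-symmetry of $[\cdot,\cdot]_\circ$ is automatic. For the representation condition \eqref{repres-Hom-Lie2} with $\rho=-\mathfrak L_\circ$ and $\beta=\alpha$, expand $-[x,y]_\circ\circ\alpha(z)=\alpha(x)\circ(y\circ z)-(-1)^{|x||y|}\alpha(y)\circ(x\circ z)$. Since $-[x,y]_\circ=[y,x]_\circ$ up to sign $(-1)^{|x||y|}$, this is exactly \eqref{cond-Hom-ant-pre-Lie1}, after checking the sign convention $[y,x]_\circ$ versus $(-1)^{|x||y|}[y,x]_\circ$ in the paper's statement. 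The condition $\rho(\alpha(x))\beta=\beta\rho(x)$ requires multiplicativity, which is not assumed. I would therefore either read (3) with the multiplicativity hypothesis or treat \eqref{repres-Hom-Lie1} as part of the assumption, matching the paper's convention.

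For the Hom super-Jacobi identity of $[\cdot,\cdot]_\circ$, expand $\circlearrowleft(-1)^{|x||z|}[\alpha(x),[y,z]_\circ]_\circ$ into the two sums $\circlearrowleft(-1)^{|x||z|}\alpha(x)\circ[y,z]_\circ$ and $-\circlearrowleft(-1)^{|x||z|}(-1)^{|x|(|y|+|z|)}[y,z]_\circ\circ\alpha(x)$. The second sum is, after resigning, the left side of \eqref{cond-Hom-ant-pre-Lie2}. Under \eqref{cond-Hom-ant-pre-Lie1}, Step 1 shows the first sum is $\pm$ the second. So the Jacobiator equals a nonzero multiple of the left side of \eqref{cond-Hom-ant-pre-Lie2} (I expect a factor $2$), and characteristic $0$ gives the equivalence.

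The main obstacle is bookkeeping the signs in Step 1 so that the factor relating the two cyclic sums is exactly $\pm1$ and not $0$. Everything else is a direct expansion.
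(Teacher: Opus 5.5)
Your plan is the paper's own proof. For $(1)\Leftrightarrow(2)$ the paper regroups $\circlearrowleft_{x,y,z}(-1)^{|x||z|}\alpha(x)\circ[y,z]_\circ$ in pairs via \eqref{cond-Hom-ant-pre-Lie1}, and for $(1)\Leftrightarrow(3)$ it splits the Hom-Jacobiator $\circlearrowleft_{x,y,z}(-1)^{|x||z|}[\alpha(x),[y,z]_\circ]_\circ$ into that sum minus the left side of \eqref{cond-Hom-ant-pre-Lie2}. With the signed convention $(-1)^{|x||y|}[y,x]_\circ\circ\alpha(z)=-[x,y]_\circ\circ\alpha(z)$ (the one in \eqref{cond-ant-pre-Lie1} and in the paper's own \eqref{equiv-res-repr}), the first sum is exactly minus the second, so the Jacobiator is $-2$ times the left side of \eqref{cond-Hom-ant-pre-Lie2}, as you expected.

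Both of your caveats are real points the paper's proof passes over. With the unsigned $[y,x]_\circ\circ\alpha(z)$ printed in \eqref{cond-Hom-ant-pre-Lie1}, the regrouping does not give $\pm$ the left side of \eqref{cond-Hom-ant-pre-Lie2}; try $x,y$ odd and $z$ even. Also, \eqref{repres-Hom-Lie1} for $\rho=-\mathfrak L_\circ$, $\beta=\alpha$ is precisely multiplicativity of $\alpha$, so $(1)\Rightarrow(3)$ genuinely needs that extra hypothesis. In that direction it is part of the conclusion, so you cannot treat it as an assumption.
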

\begin{proof}
$(1)\Longleftrightarrow(2)$. Let $x,y,z\in\mathcal{H}(\mathcal{A})$. Suppose that Eq. \eqref{cond-Hom-ant-pre-Lie1} holds. Then, by using Eqs. \eqref{comp-Lie-struc} and \eqref{cond-Hom-ant-pre-Lie1}, we have:
\begin{align}
\circlearrowleft_{x,y,z}(-1)^{|x||z|}\alpha(x)\circ[y,z]_\circ&=(-1)^{|x||z|}\alpha(x)\circ[y,z]_\circ+ (-1)^{|x||y|}\alpha(y)\circ[z,x]_\circ+(-1)^{|y||z|}\alpha(z)\circ[x,y]_\circ\nonumber\\&=(-1)^{|x||z|}\alpha(x)\circ\big(y\circ z-(-1)^{|y||z|}z\circ y\big)\nonumber\\&+ (-1)^{|x||y|} \alpha(y)\circ\big(z\circ x-(-1)^{|x||z|}x\circ z\big)\nonumber\\&+ (-1)^{|y||z|} \alpha(z)\circ\big(x\circ y-(-1)^{|x||y|}y\circ x)\nonumber\\&=(-1)^{|x||z|}\big(\alpha(x)\circ(y\circ z)-(-1)^{|x||y|}\alpha(y)\circ(x\circ z)\big)\nonumber\\&+(-1)^{|x||y|}\big(\alpha(y)\circ(z\circ x)-(-1)^{|y||z|}\alpha(z)\circ(y\circ x)\big)\nonumber \\&+(-1)^{|y||z|}\big(\alpha(z)\circ(x\circ y)-(-1)^{|x||z|}\alpha(x)\circ(z\circ y)\big)\nonumber\\&=(-1)^{|x||z|}[y,x]_\circ\circ\alpha(z)+(-1)^{|x||y|}[z,y]_\circ\circ\alpha(x)\label{cond-equiv0}\\&+(-1)^{|y||z|}[x,z]_\circ\circ\alpha(y),\nonumber
\end{align}
which gives that Eq. \eqref{cond-Hom-ant-pre-Lie2} holds if and only if Eq. \eqref{cond-equiv1} is also.\\

$(1)\Longleftrightarrow(3)$. Suppose that $(\mathcal A,\circ,\alpha)$ is a Hom-anti-pre-Lie superalgebra. Then, by Eqs. \eqref{cond-ant-pre-Lie2} and \eqref{cond-equiv1}, for any homogeneous elements $x,y,z$ of $\mathcal A$, we have:
\begin{align*}
\circlearrowleft_{x,y,z}(-1)^{|x||z|}[\alpha(x),[y,z]_\circ]_\circ&=(-1)^{|x||z|}[\alpha(x),[y,z]_\circ]_\circ+(-1)^{|x||y|}[\alpha(y),[z,x]_\circ]_\circ\\&+(-1)^{|y||z|}[\alpha(z),[x,y]_\circ]_\circ  \\&=(-1)^{|x||z|}\alpha(x)\circ[y,z]_\circ+(-1)^{|x||y|}\alpha(y)\circ[z,x]_\circ+(-1)^{|y||z|}\alpha(z)\circ[x,y]_\circ  \\&- \big((-1)^{|x||y|}[y,z]_\circ\circ\alpha(x)+(-1)^{|y||z|}[z,x]_\circ\circ\alpha(y)+(-1)^{|x||z|}[x,y]_\circ\circ\alpha(z) \big)\\&=0.
\end{align*}
Thus $(\mathcal A,[\cdot,\cdot]_\circ,\alpha)$ is a Hom-Lie superalgebra and $(\mathcal A,\circ,\alpha)$ is Hom-Lie admissible superalgebra. In addition, Eq. \eqref{cond-Hom-ant-pre-Lie1} gives the following relation:
\begin{equation}\label{equiv-res-repr}
\mathfrak L_\circ(x)\mathfrak L_\circ(y)-(-1)^{|x||y|}\mathfrak L_\circ(y)\mathfrak L_\circ(x)=-\mathfrak L_\circ([x,y]),\;\forall x,y\in\mathcal H(\mathcal A).    
\end{equation}
Then $(\mathcal A,-\mathfrak L_\circ,\alpha)$ is a representation of the Hom-Lie superalgebra $(\mathcal A,[\cdot,\cdot]_\circ,\alpha)$.\\
Now, suppose that $(\mathcal A,\circ,\alpha)$ is a Hom-Lie-admissible superalgebra such that $(\mathcal A,-\mathfrak L_\circ,\alpha)$ is a representation of the Hom-Lie superalgebra $(\mathfrak g(\mathcal A),[\cdot,\cdot]_\circ,\alpha)$. Then, we can notice that Eq. \eqref{equiv-res-repr} holds which is equivalent to the fact that Eq. \eqref{cond-Hom-ant-pre-Lie1} is also holds. Moreover, Eq. \eqref{cond-equiv0} holds, too. In addition, by using the fact that $(\mathcal A,\circ,\alpha)$ is a Hom-Lie admissible superalgebra, we have
$$\circlearrowleft_{x,y,z}(-1)^{|x||z|}\alpha(x)\circ[y,z]_\circ=\circlearrowleft_{x,y,z}(-1)^{|x||z|}[x,y]_\circ\circ\alpha(z),\;\forall x,y,z\in\mathcal{H}(\mathcal{A}).$$
Thus by Eq. \eqref{cond-equiv0}, Eqs. \eqref{cond-Hom-ant-pre-Lie2} and \eqref{cond-equiv1} hold which gives that $(\mathcal{A},\circ,\alpha)$ is a Hom-anti-pre-Lie superalgebra.

\end{proof}
\begin{prop}\label{prop-twist-anti-pre-Lie}
Let $(\mathcal A,\circ)$ be an anti-pre-Lie superalgebra and $\alpha$ is an algebra endomorphism of $\mathcal A$. Then, the triplet $(\mathcal A,\circ_\alpha,\alpha)$ is a multiplicative Hom-anti-pre-Lie superalgebra, where
\begin{equation}\label{twist-anti-pre-Lie}
x\circ_\alpha y=\alpha(x)\circ\alpha(y),\;\forall x,y\in\mathcal A.    
\end{equation}
\end{prop}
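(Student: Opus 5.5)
The plan is to reduce each defining identity of $(\mathcal A,\circ_\alpha,\alpha)$ to $\alpha^2$ applied to the corresponding identity of the untwisted anti-pre-Lie superalgebra $(\mathcal A,\circ)$. This is the same mechanism as in Proposition \ref{Prop-twis-Hom-ant-ass} and Proposition \ref{twist-dendr}. Throughout I use two facts: $\alpha$ is even, so $|\alpha(x)|=|x|$ and every sign $(-1)^{|x||y|}$ is unchanged when arguments are replaced by their images under $\alpha$; and $\alpha(x\circ y)=\alpha(x)\circ\alpha(y)$.

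First I check multiplicativity:
$$\alpha(x\circ_\alpha y)=\alpha\big(\alpha(x)\circ\alpha(y)\big)=\alpha^2(x)\circ\alpha^2(y)=\alpha(x)\circ_\alpha\alpha(y).$$
Next I record the behaviour of the commutator. From
$$[x,y]_{\circ_\alpha}=\alpha(x)\circ\alpha(y)-(-1)^{|x||y|}\alpha(y)\circ\alpha(x),$$
evenness of $\alpha$ gives $[x,y]_{\circ_\alpha}=[\alpha(x),\alpha(y)]_\circ=\alpha([x,y]_\circ)$.

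With these in hand, each term in \eqref{cond-Hom-ant-pre-Lie1} and \eqref{cond-Hom-ant-pre-Lie2} becomes an $\alpha^2$-image:
\begin{align*}
\alpha(x)\circ_\alpha(y\circ_\alpha z)&=\alpha^2(x)\circ\alpha\big(\alpha(y)\circ\alpha(z)\big)=\alpha^2\big(x\circ(y\circ z)\big),\\
[y,x]_{\circ_\alpha}\circ_\alpha\alpha(z)&=\alpha\big(\alpha([y,x]_\circ)\big)\circ\alpha^2(z)=\alpha^2\big([y,x]_\circ\circ z\big).
\end{align*}
The same computation, with the roles of $x$ and $y$ exchanged, handles $\alpha(y)\circ_\alpha(x\circ_\alpha z)$. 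Likewise each cyclic term of \eqref{cond-Hom-ant-pre-Lie2} equals $\alpha^2([x,y]_\circ\circ z)$ up to its unchanged sign.

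Hence the left-hand side minus the right-hand side of \eqref{cond-Hom-ant-pre-Lie1} for $\circ_\alpha$ is $\alpha^2$ of the corresponding difference in \eqref{cond-ant-pre-Lie1}. The cyclic sum in \eqref{cond-Hom-ant-pre-Lie2} is $\alpha^2$ of the cyclic sum in \eqref{cond-ant-pre-Lie2}. Both vanish because $(\mathcal A,\circ)$ is anti-pre-Lie, so linearity of $\alpha^2$ finishes the proof.

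The only delicate point is sign bookkeeping. The Hom identity \eqref{cond-Hom-ant-pre-Lie1}, as written, has no factor $(-1)^{|x||y|}$ on $[y,x]_\circ\circ\alpha(z)$, whereas \eqref{cond-ant-pre-Lie1} does. Since $\alpha$ preserves degrees, the twisting transports whatever sign convention the source identity carries. I would therefore state the transferred identity with exactly the sign of \eqref{cond-ant-pre-Lie1}, matching the convention used in the introduction's \eqref{def-anti-pre-Lie superalgebras1}, and treat \eqref{cond-Hom-ant-pre-Lie1} as read with that same sign. The alternative is to observe that both sides involve the same homogeneous degrees, so the identity is obtained verbatim under $\alpha^2$. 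Apart from this convention check, the argument is routine.
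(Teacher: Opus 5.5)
Your proposal is correct and follows essentially the same route as the paper: check multiplicativity, then recognize each term of \eqref{cond-Hom-ant-pre-Lie1} and \eqref{cond-Hom-ant-pre-Lie2} for $\circ_\alpha$ as the $\alpha^2$-image of the corresponding untwisted term. Your sign caveat is justified and your fix is the right one: the $\alpha^2$-transport can only reproduce the factor $(-1)^{|x||y|}$ carried by \eqref{cond-ant-pre-Lie1}, so \eqref{cond-Hom-ant-pre-Lie1} must be read with that factor (the paper's own computation slips at exactly this step, rewriting $\alpha([y,x]_\circ)$ as $\alpha(x\circ y-(-1)^{|x||y|}y\circ x)$ and ending with $[x,y]_{\circ_\alpha}\circ_\alpha\alpha(z)$).
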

\begin{proof}
 It's obvious to show that, $\alpha([x,y]^C)=[\alpha(x),\alpha(y)]^C$ and $\alpha(x\circ_\alpha y)=\alpha(x)\circ_\alpha \alpha(y)$, for all $x,y\in\mathcal H(\mathcal A)$. Let $x,y,z\in\mathcal H(\mathcal A)$, then by using Eq. \eqref{cond-ant-pre-Lie1} and the fact that $\alpha$ is an endomorphism algebra, we have
 \begin{align*}
 \alpha(x)\circ_\alpha(y\circ_\alpha z)-(-1)^{|x||y|} \alpha(y)\circ_\alpha(x\circ_\alpha z)&=\alpha^2(x\circ(y\circ z)-(-1)^{|x||y|} y\circ(x\circ z))\\&=\alpha^2([y,x]_\circ \circ z)\\&=\alpha^2([y,x]_\circ)\circ\alpha^2(z)\\&= \alpha([y,x]_\circ)\circ_\alpha\alpha(z)\\&=\alpha(x\circ y-(-1)^{|x||y|}y\circ x)\circ_\alpha\alpha(z)\\&= (x\circ_\alpha y-(-1)^{|x||y|}y\circ_\alpha x)\circ_\alpha\alpha(z)\\&= [x,y]_{\circ_\alpha}\circ_\alpha\alpha(z),  
 \end{align*}
which implies that condition \eqref{cond-Hom-ant-pre-Lie1} is satisfied. By the same way, we can show that condition \eqref{cond-Hom-ant-pre-Lie2} is satisfied. Then $(\mathcal{A},\circ_\alpha,\alpha)$ is a multiplicative Hom-anti-pre-Lie superalgebra. 
\end{proof}

\subsection{Anti-super-$\mathcal{O}$-Operators on Hom-Lie superalgebras}\ \\

Let us consider a Hom-Lie superalgebra $(\mathfrak g,[\cdot,\cdot],\alpha)$ and a representation $(V,\rho,\beta)$ of $\mathfrak g$. Recall that, an even linear map $T:V\to\mathfrak g$ is called a \textbf{super-$\mathcal O$-operator} on $(\mathfrak g,[\cdot,\cdot],\alpha)$ associated to $(V,\rho,\beta)$ if $T$ satifyies
\begin{eqnarray}
 T\circ\beta&=&\alpha\circ T,\label{cond-O-Oper-Hom-Lie1}\\
\lbrack T(u),T(v)\rbrack&=&T\big(\rho(T(u))v-(-1)^{|u||v|}\rho(T(v))u\big),\;\forall u,v\in\mathcal H(V).\label{cond-O-Oper-Hom-Lie2}
\end{eqnarray}

\begin{df}
Let $(\rho,V,\beta)$ be a representation of a Hom-Lie superalgebra $(\mathfrak g,[\cdot,\cdot],\alpha)$. An even linear map $T:V\to\mathfrak g$ is called an \textbf{anti-super-$\mathcal{O}$-operator} on $(\mathfrak g,[\cdot,\cdot],\alpha)$ associated to $(V,\rho,\beta)$ if it satisfies
\begin{eqnarray}
T\circ\beta&=&\alpha\circ T,\label{cond-ant-O-oper-Hom-Lie1} \\
\lbrack T(u),T(v)\rbrack&=&T\big((-1)^{|u||v|}\rho(T(v))u-\rho(T(u))v\big),\;\forall u,v\in\mathcal H(V).\label{cond-ant-O-oper-Hom-Lie2}
\end{eqnarray}
an anti-super-$\mathcal O$-operator $T$ is called \textbf{strong} if it satisfies:
\begin{equation}\label{cond-strong-O-Oper}
\rho([T(u),T(v)])\beta(w)+(-1)^{|u|(|v|+|w|)}\rho([T(v),T(w)])\beta(u)+(-1)^{|w|(|u|+|v|)}\rho([T(w),T(u)])\beta(v)=0,  
\end{equation}
for all $ u,v,w\in\mathcal H(V)$.\\
In particular, an anti-super-$\mathcal O$-operator $\mathcal R$ of $(\mathfrak g,[\cdot,\cdot],\alpha)$ associated to the adjoint representation $(\mathfrak g,ad,\alpha)$ is called an \textbf{anti-Rota-Baxter operator (of weight zero)}, that is, $\mathcal R:\mathfrak g\to\mathfrak g$ is an even linear map commuting with $\alpha$ satisfying:
\begin{equation}\label{cond-Rota-Baxter-Oper}
 [\mathcal R(y),\mathcal R(x)]= (-1)^{|x||y|}\mathcal R([\mathcal R(x),y]+[x,\mathcal R(y)]),\;\forall x,y\in\mathfrak g.  
\end{equation}
An anti-Rota-Baxter operator $\mathcal R$ is called \textbf{strong}, if it satisfies
\begin{equation}\label{cond-strong-Rota-Baxter-Oper}
[[\mathcal R(x),\mathcal R(y)],\alpha(z)]+(-1)^{|x|(|y|+|z|)} [[\mathcal R(y),\mathcal R(z)],\alpha(x)]+(-1)^{|z|(|x|+|y|)}[[\mathcal R(z),\mathcal R(x)],\alpha(y)]=0,   
\end{equation}
for all $x,y,z\in\mathcal H(\mathfrak g)$.
\end{df}
\begin{df}
Let $(V,\rho,\beta)$ be a representation of a Hom-Lie superalgebra $(\mathfrak g,[\cdot,\cdot],\alpha)$. A linear map $\mathfrak D:\mathfrak g\to V$ is called a \textbf{super anti-derivation} ( or a \textbf{super anti-$1$-cocycle}) if $\mathfrak D$ satisfies
\begin{eqnarray}
\mathfrak D\circ\alpha&=&\beta\circ\mathfrak D,\label{anti-der-Hom-Lie1}\\
\mathfrak D([x,y])&=&(-1)^{|y|(|\mathfrak D|+|x|)}\rho(y)(\mathfrak D(x))-\rho(x)(\mathfrak D(y)),\;\forall x,y\in\mathfrak g.\label{anti-der-Hom-Lie2}    
\end{eqnarray}
\end{df}
\begin{prop}
Let $(V,\rho,\beta)$ be a representation of a Hom-Lie superalgebra $(\mathfrak g,[\cdot,\cdot],\alpha)$ and $T:V\to\mathfrak g$ an even invertible linear map. Then $T$ is an anti-super-$\mathcal O$-operator on $\mathfrak g$ associated to $(V,\rho,\beta)$ if and only if $T^{-1}$ is an even anti-derivation on $\mathfrak g$.
\end{prop}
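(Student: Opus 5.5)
The plan is to substitute $u=T^{-1}(x)$ and $v=T^{-1}(y)$ into the defining identities and then apply $T^{-1}$. Since $T$ is even and invertible, every $x\in\mathcal H(\mathfrak g)$ is of the form $T(u)$ with $u\in\mathcal H(V)$ and $|u|=|x|$. Also $\mathfrak D:=T^{-1}$ is even, so $|\mathfrak D|=0$ and the sign in Eq. \eqref{anti-der-Hom-Lie2} reduces to $(-1)^{|x||y|}$. This parity bookkeeping is the only place where signs need care.

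The proof then splits into two equivalences.

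\emph{Compatibility with the twisting maps.} The identity $T\circ\beta=\alpha\circ T$ holds if and only if $\beta\circ T^{-1}=T^{-1}\circ\alpha$. To see this, compose on the left and on the right with $T^{-1}$, and conversely with $T$. So Eq. \eqref{cond-ant-O-oper-Hom-Lie1} is equivalent to Eq. \eqref{anti-der-Hom-Lie1} for $\mathfrak D=T^{-1}$.

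\emph{The bracket identity.} Assume $T$ is an anti-super-$\mathcal O$-operator. Take $x,y\in\mathcal H(\mathfrak g)$ and put $u=T^{-1}(x)$, $v=T^{-1}(y)$. Then Eq. \eqref{cond-ant-O-oper-Hom-Lie2} reads
\[
[x,y]=T\big((-1)^{|x||y|}\rho(y)T^{-1}(x)-\rho(x)T^{-1}(y)\big).
\]
Applying $T^{-1}$ gives
\[
T^{-1}([x,y])=(-1)^{|x||y|}\rho(y)T^{-1}(x)-\rho(x)T^{-1}(y),
\]
which is exactly Eq. \eqref{anti-der-Hom-Lie2} with $|\mathfrak D|=0$.

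For the converse, start from Eq. \eqref{anti-der-Hom-Lie2} evaluated at $x=T(u)$ and $y=T(v)$. This gives
\[
T^{-1}([T(u),T(v)])=(-1)^{|u||v|}\rho(T(v))u-\rho(T(u))v .
\]
Applying $T$ to both sides recovers Eq. \eqref{cond-ant-O-oper-Hom-Lie2}.

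I do not expect a real obstacle. The only point to check is that the degrees match, namely $|u|=|T(u)|$. This holds because $T$ is even, so the sign conventions in the two definitions agree.
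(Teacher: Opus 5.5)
Your proof is correct and follows the same route as the paper. You invert the twisting condition by composing with $T^{-1}$, then substitute $u=T^{-1}(x)$, $v=T^{-1}(y)$ into the anti-super-$\mathcal O$-operator identity and apply $T^{-1}$ (and $T$ for the converse). Your form $\beta\circ T^{-1}=T^{-1}\circ\alpha$ is in fact the correct, well-typed version of the compatibility, which the paper's displayed computation garbles by swapping $\alpha$ and $\beta$; you also write out the converse that the paper only calls ``similar.''
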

\begin{proof}
Let $T:V\to\mathfrak g$ be an even invertible linear map. \\
Suppose that $T$ is an anti-super-$\mathcal O$-operator on $(\mathfrak g,[\cdot,\cdot],\alpha)$ with respect to a representation $(V,\rho,\beta)$. Then, by Eq. \eqref{anti-der-Hom-Lie1}, we have 
$$ T^{-1}\circ T\circ\alpha\circ T^{-1}=T^{-1}\circ\beta\circ T\circ T^{-1},$$
which gives that 
$$\alpha\circ T^{-1}=T^{-1}\circ\beta.$$
So, $T^{-1}$ satisfies Eq. \eqref{anti-der-Hom-Lie1}.\\
Let $x,y\in\mathcal{H}(\mathfrak g)$. By using Eq. \eqref{cond-ant-O-oper-Hom-Lie2}, we have
\begin{align*}
T^{-1} [x,y]&=T^{-1} [T(T^{-1}(x)),T(T^{-1}(y))]\\&=T^{-1}\Big(T\big((-1)^{|x||y|}\rho(T(T^{-1}(y)))(T^{-1}(x))-\rho(T(T^{-1}(x)))(T^{-1}(y))\big)\Big)\\&=(-1)^{|x||y|}\rho(y)(T^{-1}(x))-\rho(x)(T^{-1}(y)),   
\end{align*}
which implies that, $T^{-1}$ satisfies Eq. \eqref{anti-der-Hom-Lie2}. Then, $T^{-1}$ is an even anti-derivation on $(\mathfrak g,[\cdot,\cdot],\alpha)$.\\
Similar we can show the converse case.
\end{proof}
In general, when \( T \) is a super-\( \mathcal{O} \)-operator on a Hom-Lie superalgebra \( (\mathfrak{g}, [\cdot,\cdot], \alpha) \) associated with a representation \( (V, \rho, \beta) \), one can define an induced Hom-pre-Lie superalgebra structure on \( V \) (see \cite{Mabrouk-Ncib-Silvestrov,Mabrouk-Silvestrov-Zouaidi} for further details). However, this result does not hold in our setting. Specifically, an anti-super-\( \mathcal{O} \)-operator \( T \) on a Hom-Lie superalgebra with respect to a representation \( (V, \rho, \beta) \) does not, in general, give rise to an induced Hom-anti-pre-Lie superalgebra structure on \( V \), except in the case where \( T \) satisfies the stronger condition outlined in the following theorem:

\begin{thm}\label{Hom-ant-pre-Lie-by-ant-O-oper}
 Let $T$ be an anti-super-$\mathcal O$-operator on a Hom-Lie superalgebra $(\mathfrak g,[\cdot,\cdot],\alpha)$ with respect to a representation $(V,\rho,\beta)$. Then, $(V,\circ_V,\beta)$ where, $\circ_V:V\times V\to V$ is defined by
 \begin{equation}\label{ant-O-oper-to-anti-pre-Lie}
 u\circ_V v=-\rho(T(u))v,\;\forall u,v\in V,    
 \end{equation}
 satisfies Eq. \eqref{cond-Hom-ant-pre-Lie1}. Moreover, $(V,\circ_V,\beta)$ is a Hom-Lie-admissible such that $(V,\circ_V,\beta)$ is a Hom-anti-pre-Lie superalgebra if and only if $T$ is strong. So, $T$ is a homomorphism of Hom-Lie superalgebras from the sub-adjacent Hom-Lie superalgebra $(\mathfrak g(V),[\cdot,\cdot]_{\circ_V},\beta)$ to $(\mathfrak g,[\cdot,\cdot],\alpha)$. Therefore, there is an induced Hom-anti-pre-Lie
superalgebra structure on $T(V)=\{T(u);\;u\in V\}\subset\mathfrak g$ given by
\begin{equation}\label{ind-Hom-anti-T(V)}
T(u)\circ_\mathfrak gT(v)=T(u\circ_Vv),\forall u,v\in V,
\end{equation}
and $T$ is a homomorphism of Hom-anti-pre-Lie superalgebras.
\end{thm}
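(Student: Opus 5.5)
The plan is to verify the defining identities of a Hom-anti-pre-Lie superalgebra on $V$ directly from the anti-super-$\mathcal O$-operator identities \eqref{cond-ant-O-oper-Hom-Lie1}--\eqref{cond-ant-O-oper-Hom-Lie2} and the representation axioms \eqref{repres-Hom-Lie1}--\eqref{repres-Hom-Lie2}. The first step is to compute the super-commutator of $\circ_V$:
$$[u,v]_{\circ_V}=-\rho(T(u))v+(-1)^{|u||v|}\rho(T(v))u,$$
so that \eqref{cond-ant-O-oper-Hom-Lie2} reads exactly $T([u,v]_{\circ_V})=[T(u),T(v)]$. This already says that $T$ intertwines the brackets, and, together with $T\beta=\alpha T$, it gives the homomorphism statement once we know $\mathfrak g(V)$ is a Hom-Lie superalgebra.

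Next I will verify \eqref{cond-Hom-ant-pre-Lie1} for $\circ_V$. Using $\rho(T(\beta(u)))=\rho(\alpha(T(u)))$, the left-hand side equals
$$\rho(\alpha(T(u)))\rho(T(v))w-(-1)^{|u||v|}\rho(\alpha(T(v)))\rho(T(u))w=\rho([T(u),T(v)])\beta(w)$$
by \eqref{repres-Hom-Lie2}. The right-hand side is $[v,u]_{\circ_V}\circ_V\beta(w)=-\rho(T([v,u]_{\circ_V}))\beta(w)=-\rho([T(v),T(u)])\beta(w)$. These agree by super-skew-symmetry. This holds for every anti-super-$\mathcal O$-operator, with no strength assumption.

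For \eqref{cond-Hom-ant-pre-Lie2}, each term is rewritten as $[u,v]_{\circ_V}\circ_V\beta(w)=-\rho([T(u),T(v)])\beta(w)$. The cyclic sum in \eqref{cond-Hom-ant-pre-Lie2} then becomes, up to an overall sign and sign bookkeeping, exactly the strong condition \eqref{cond-strong-O-Oper}. The main obstacle is matching the signs: \eqref{cond-Hom-ant-pre-Lie2} carries the prefactors $(-1)^{|x||z|},(-1)^{|x||y|},(-1)^{|y||z|}$, while \eqref{cond-strong-O-Oper} carries $1,(-1)^{|u|(|v|+|w|)},(-1)^{|w|(|u|+|v|)}$. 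I will multiply by $(-1)^{|u||w|}$ and use the parity constraint of homogeneous elements to check that the two forms coincide. Hence $(V,\circ_V,\beta)$ is Hom-anti-pre-Lie if and only if $T$ is strong.

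By the equivalence proposition (item (3)), a Hom-anti-pre-Lie superalgebra is Hom-Lie admissible, so $\mathfrak g(V)$ is a Hom-Lie superalgebra, and $T$ is a homomorphism into $\mathfrak g$ by the first step. Finally, on $T(V)$ the product \eqref{ind-Hom-anti-T(V)} is well-defined provided $T(u)=0$ implies $T(u\circ_V v)=0$ and $T(v\circ_V u)=0$. The second follows from $T(v\circ_V u)=-T(\rho(T(v))u)$ combined with \eqref{cond-ant-O-oper-Hom-Lie2}. For the first, I would either restrict to the case of injective $T$ or transport the structure via $V/\ker T$, checking that $\ker T$ is an ideal for $\circ_V$. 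The identities then transfer because $T$ is surjective onto $T(V)$ and intertwines $\beta$ with $\alpha$, which makes $T$ a homomorphism of Hom-anti-pre-Lie superalgebras.
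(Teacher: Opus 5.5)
Your route is the same as the paper's. You identify $T([u,v]_{\circ_V})=[T(u),T(v)]$ from \eqref{cond-ant-O-oper-Hom-Lie2}, get the first axiom from \eqref{repres-Hom-Lie2}, and reduce \eqref{cond-Hom-ant-pre-Lie2} to \eqref{cond-strong-O-Oper} by multiplying by $(-1)^{|u||w|}$. The paper's displayed chain silently drops that factor, so your bookkeeping there is cleaner.

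\textbf{The sign step fails.} In checking \eqref{cond-Hom-ant-pre-Lie1} you correctly find the left side to be $\rho([T(u),T(v)])\beta(w)$ and the right side to be $-\rho([T(v),T(u)])\beta(w)$. You then claim these agree by super-skew-symmetry, but super-skew-symmetry gives
\[
-\rho([T(v),T(u)])\beta(w)=(-1)^{|u||v|}\rho([T(u),T(v)])\beta(w).
\]
So for $u,v$ both odd the two sides differ by $2\rho([T(u),T(v)])\beta(w)$, which has no reason to vanish. What your computation actually proves is
\[
\beta(u)\circ_V(v\circ_V w)-(-1)^{|u||v|}\beta(v)\circ_V(u\circ_V w)=(-1)^{|u||v|}[v,u]_{\circ_V}\circ_V\beta(w)=-[u,v]_{\circ_V}\circ_V\beta(w).
\]
This is exactly where the paper's proof ends, and it is the Hom analogue of \eqref{cond-ant-pre-Lie1}. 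The printed \eqref{cond-Hom-ant-pre-Lie1} has lost the factor $(-1)^{|x||y|}$. You should state and verify the signed identity rather than force agreement with the printed one.

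The signed version is also what makes your appeal to Hom-Lie admissibility correct. With it, $\circlearrowleft_{x,y,z}(-1)^{|x||z|}\alpha(x)\circ[y,z]_\circ$ equals minus the left side of \eqref{cond-Hom-ant-pre-Lie2}. Hence the cyclic Hom-Jacobi sum $\circlearrowleft_{x,y,z}(-1)^{|x||z|}[\alpha(x),[y,z]_\circ]_\circ$ is $-2$ times that left side. It therefore vanishes whenever \eqref{cond-Hom-ant-pre-Lie2} holds, i.e. whenever $T$ is strong.

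\textbf{The $T(V)$ step needs no restriction.} You leave open the implication $T(u)=0\Rightarrow T(u\circ_V v)=0$ and propose restricting to injective $T$ or passing to $V/\ker T$. Neither is needed, and restricting to injective $T$ would weaken the theorem. Since $u\circ_V v=-\rho(T(u))v$ depends on $u$ only through $T(u)$, it vanishes outright when $T(u)=0$. Combine this with your correct argument for the other slot and with $\alpha(T(V))=T(\beta(V))\subseteq T(V)$. Then the induced structure on $T(V)$ is well defined for arbitrary $T$, which is more than the paper's proof spells out, since it stops after the strongness equivalence.
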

\begin{proof}
For any $u,v,w\in\mathcal H(V)$, we have:
\begin{align*}
\beta(u)\circ_V(v\circ_V w)-(-1)^{|u||v|}\beta(v)\circ_V(u\circ_V w)&=\rho(T(\beta(u))\rho(T(v))w-(-1)^{|u||v|} \rho(T(\beta(v))\rho(T(u))w\\
&=\rho(\lbrack T(u),T(v)\rbrack)\beta(w)\\
&=\rho\big(T((-1)^{|u||v|}\rho(T(v))u-\rho(T(u))v)\big)\beta(w) \\
&=\big(\rho(T(u))v)-(-1)^{|u||v|}\rho(T(v))u\big)\circ_V\beta(w) \\
&=-(u\circ_V v-(-1)^{|u||v|}v\circ_V u)\circ_V\beta(w) \\
&=(-1)^{|u||v|}\lbrack v,u\rbrack_{\circ_V}\circ_V\beta(w).
\end{align*} 
Then, the product $\circ_V$ satisfies condition \eqref{cond-Hom-ant-pre-Lie1} on $V$. It remain to show that condition \eqref{cond-Hom-ant-pre-Lie2} is satisfied. Let $u,v,w\in\mathcal H(V)$, we have:
\begin{align*}
    &(-1)^{|u||w|}[u,v]_{\circ_V}\circ_V \beta(w)+(-1)^{|u||v|}[v,w]_{\circ_V}\circ_V \beta(u)+(-1)^{|v||w|}[w,u]_{\circ_V}\circ_V \beta(v)\\
&= -(-1)^{|u||w|}\rho(T([u,v]_{\circ_V}))\beta(w)
   -(-1)^{|u||v|}\rho(T([v,w]_{\circ_V}))\beta(u)
   -(-1)^{|v||w|}\rho(T([w,u]_{\circ_V}))\beta(v)\\
&= -(-1)^{|u||w|}\rho([T(u),T(v)])\beta(w)
   -(-1)^{|u||v|}\rho([T(v),T(w)])\beta(u)
   -(-1)^{|v||w|}\rho([T(w),T(u)])\beta(v)\\
&= -\Big(\rho([T(u),T(v)])\beta(w)
   +(-1)^{|u|(|v|+|w|)}\rho([T(v),T(w)])\beta(u)
   +(-1)^{|w|(|u|+|v|)}\rho([T(w),T(u)])\beta(v)\Big)\\
&=0,
\end{align*}
if and only if $T$ is strong which gives the result.
\end{proof}
\begin{cor}
Let $\mathcal R:\mathfrak g\to\mathfrak g$ be a strong anti-Rota-Baxter operator of a Hom-Lie superalgebra $(\mathfrak g,[\cdot,\cdot],\alpha)$. Then, the product 
\begin{equation}\label{ant-R-B-to-anti-pre-Lie}
x\circ y=-[\mathcal R(x),y],\;\forall x,y\in\mathfrak g,    
\end{equation}
defines on $\mathfrak g$ a Hom-anti-pre-Lie superalgebra structure. Conversely, if $\mathcal R:\mathfrak g\to\mathfrak g$ is a linear transformation of a Hom-Lie superalgebra $(\mathfrak g,[\cdot,\cdot],\alpha)$ such that, the product defined by Eq. \eqref{ant-R-B-to-anti-pre-Lie} defines on $\mathcal A$ a Hom-anti-pre-Lie superalgebra, then $\mathcal R$ satisfies Eq. \eqref{cond-strong-Rota-Baxter-Oper} and the following equation:
\begin{equation}\label{Cor-R-B-Hom-anti-pre-Lie}
[[\mathcal R(x),\mathcal R(y)]+\mathcal R([\mathcal R(x),y]+[x,\mathcal R(y)]),\alpha(z)]=0,\forall x,y,z\in\mathfrak g.
\end{equation}
\end{cor}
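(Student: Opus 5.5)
The plan is to treat the direct statement as the special case $V=\mathfrak g$, $\rho=\mathfrak{ad}$, $\beta=\alpha$ of Theorem \ref{Hom-ant-pre-Lie-by-ant-O-oper}, and then to prove the converse by running the computation of that proof backwards.

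\emph{Direct statement.} First I check that Eq. \eqref{cond-Rota-Baxter-Oper} is exactly Eq. \eqref{cond-ant-O-oper-Hom-Lie2} for $T=\mathcal R$ and $\rho=\mathfrak{ad}$. Super-skew-symmetry gives $(-1)^{|x||y|}[\mathcal R(y),x]=-[x,\mathcal R(y)]$. Hence the right-hand side of \eqref{cond-ant-O-oper-Hom-Lie2} becomes $-\mathcal R([\mathcal R(x),y]+[x,\mathcal R(y)])$. Applying super-skew-symmetry to the left-hand side of \eqref{cond-Rota-Baxter-Oper} gives the same expression for $[\mathcal R(x),\mathcal R(y)]$. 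Next, $\mathfrak{ad}([\mathcal R(x),\mathcal R(y)])\alpha(z)=[[\mathcal R(x),\mathcal R(y)],\alpha(z)]$, so \eqref{cond-strong-O-Oper} coincides with \eqref{cond-strong-Rota-Baxter-Oper}. Finally, $x\circ_V y=-\mathfrak{ad}(\mathcal R(x))y=-[\mathcal R(x),y]$. Theorem \ref{Hom-ant-pre-Lie-by-ant-O-oper} then gives the Hom-anti-pre-Lie structure directly.

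\emph{Converse.} Let $x\circ y=-[\mathcal R(x),y]$ be Hom-anti-pre-Lie. I will use that $\mathcal R$ commutes with $\alpha$, as is built into the notion of anti-Rota–Baxter operator; this is the one point I would state explicitly.

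\begin{enumerate}
\item Expand \eqref{cond-Hom-ant-pre-Lie1}. The left side is $[\alpha(\mathcal R x),[\mathcal R y,z]]-(-1)^{|x||y|}[\alpha(\mathcal R y),[\mathcal R x,z]]$. By the Hom super-Jacobi identity \eqref{H-sJ} this equals $[[\mathcal R x,\mathcal R y],\alpha(z)]$.
\item Expand the right side. Using skew-symmetry, $[y,x]_\circ=-[\mathcal R y,x]+(-1)^{|x||y|}[\mathcal R x,y]=(-1)^{|x||y|}([\mathcal R x,y]+[x,\mathcal R y])$. So the right side is $-(-1)^{|x||y|}[\mathcal R([\mathcal R x,y]+[x,\mathcal R y]),\alpha(z)]$.
\item Compare the two sides. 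After tracking the sign (that is, placing the sign conventions consistently in \eqref{cond-Hom-ant-pre-Lie1}), condition \eqref{cond-Hom-ant-pre-Lie1} becomes exactly Eq. \eqref{Cor-R-B-Hom-anti-pre-Lie}.
\item Obtain the strong condition. By the same computation, $[x,y]_\circ\circ\alpha(z)=-[\mathcal R([x,y]_\circ),\alpha(z)]$. Using \eqref{Cor-R-B-Hom-anti-pre-Lie}, this equals $\pm[[\mathcal R x,\mathcal R y],\alpha(z)]$. Substituting this into \eqref{cond-Hom-ant-pre-Lie2} and regrouping signs as in the last display of the proof of Theorem \ref{Hom-ant-pre-Lie-by-ant-O-oper} yields \eqref{cond-strong-Rota-Baxter-Oper}.
\end{enumerate}

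The main obstacle is the sign bookkeeping in step 3. The paper's two versions of the first anti-pre-Lie axiom, \eqref{cond-ant-pre-Lie1} and \eqref{cond-Hom-ant-pre-Lie1}, differ by a factor $(-1)^{|x||y|}$ on the right. I would verify the identity first on even elements, where all signs are $+1$, and then restore the signs by the Koszul rule.
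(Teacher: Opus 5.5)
Your proposal is correct and is essentially the paper's own argument. The direct part is Theorem~\ref{Hom-ant-pre-Lie-by-ant-O-oper} with $\rho=\mathfrak{ad}$ and $\beta=\alpha$. For the converse, you expand \eqref{cond-Hom-ant-pre-Lie1} using the Hom super-Jacobi identity \eqref{H-sJ} to obtain \eqref{Cor-R-B-Hom-anti-pre-Lie}. You then substitute $[x,y]_\circ\circ\alpha(z)=-[[\mathcal R(x),\mathcal R(y)],\alpha(z)]$ into the second axiom to obtain \eqref{cond-strong-Rota-Baxter-Oper}. The signed right-hand side $(-1)^{|x||y|}[y,x]_\circ\circ\alpha(z)$ that you settle on in step 3 is exactly the form the paper's proof uses, so state that choice outright rather than leaving it as "tracking the sign".

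One correction concerns your justification for $\mathcal R\alpha=\alpha\mathcal R$. In the converse, $\mathcal R$ is only assumed to be a linear map, so the commutation is not built into the hypotheses; it is an extra assumption. The paper's proof also uses it silently when it writes $[\alpha\mathcal R(x),[\mathcal R(y),z]]$. It cannot be dropped, as the following example shows.

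\begin{itemize}
\item Take $\mathfrak g=\langle e_1,e_2\rangle$, purely even, with $[e_1,e_2]=e_2$, $\alpha(e_1)=e_1$, $\alpha(e_2)=0$, $\mathcal R(e_1)=e_2$ and $\mathcal R(e_2)=e_1$.
\item The Hom-Jacobi identity holds automatically, because the Hom-Jacobiator is alternating trilinear on a $2$-dimensional space.
\item The product is $e_1\circ e_1=e_2$, $e_2\circ e_2=-e_2$ and $e_1\circ e_2=e_2\circ e_1=0$, so $[\cdot,\cdot]_\circ=0$.
\item The expression $\alpha(x)\circ(y\circ z)-\alpha(y)\circ(x\circ z)$ is skew in $x,y$. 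At $(e_1,e_2)$ it equals $e_1\circ(e_2\circ z)=0$, since $e_2\circ z\in\langle e_2\rangle$, $e_1\circ e_2=0$ and $\alpha(e_2)=0$. Hence it vanishes identically.
\item So both Hom-anti-pre-Lie axioms hold.
\item However, the left side of \eqref{Cor-R-B-Hom-anti-pre-Lie} at $(x,y,z)=(e_1,e_2,e_1)$ is $[[e_2,e_1],e_1]=e_2\neq 0$.
\end{itemize}

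You should therefore state $\mathcal R\alpha=\alpha\mathcal R$ as an added hypothesis of the converse. With that hypothesis, your steps 1--4 go through exactly as written.
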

\begin{proof}
By theorem \ref{Hom-ant-pre-Lie-by-ant-O-oper}, the binary operation defined by Eq. \eqref{Cor-R-B-Hom-anti-pre-Lie} defines on $\mathcal A$ a Hom-anti-pre-Lie superalgebra.

Conversely, let $x,y,z$ be homogeneous elements of $\mathfrak g$. Then we have
\begin{align*}
\alpha(x)\circ(y\circ z)-(-1)^{|x||y|}\alpha(y)\circ(x\circ z)
&=(-1)^{|x||y|}[y,x]_\circ\circ \alpha(z),\\
\alpha(x)\circ[\mathcal R(y),z]-(-1)^{|x||y|}\alpha(y)\circ[\mathcal R(x),z]
&=(-1)^{|x||y|}[\mathcal R([y,x]_\circ),\alpha(z)],\\
[\alpha \mathcal R(x),[\mathcal R(y),z]]-(-1)^{|x||y|}[\alpha \mathcal R(y),[\mathcal R(x),z]]
&=(-1)^{|x||y|}[\mathcal R([\mathcal R(y),x]+[y,\mathcal R(x)]),\alpha(z)].
\end{align*}

By the Hom-super-Jacobi identity, it follows that
\begin{align*}
[[\mathcal R(x),\mathcal R(y)],\alpha(z)]
= (-1)^{|x||y|}[\mathcal R([\mathcal R(y),x]+[y,\mathcal R(x)]),\alpha(z)].
\end{align*}
Consequently,
\[
[[\mathcal R(x),\mathcal R(y)]+\mathcal R([\mathcal R(x),y]+[x,\mathcal R(y)]),\alpha(z)]=0,
\]
which implies that $\mathcal R$ satisfies condition \eqref{Cor-R-B-Hom-anti-pre-Lie}.

Assume that the product defined by Eq.~\eqref{ant-R-B-to-anti-pre-Lie} endows $\mathcal A$ with a Hom-anti-pre-Lie superalgebra structure. Then, for any $x,y,z\in\mathcal H(\mathcal A)$, we have
\begin{align*}
0=&[x,y]_\circ\circ \alpha(z)
+(-1)^{|x|(|y|+|z|)}[y,z]_\circ\circ \alpha(x)
+(-1)^{|z|(|x|+|y|)}[z,x]_\circ\circ \alpha(y)\\
=&[\mathcal R([x,y]_\circ),\alpha(z)]
+(-1)^{|x|(|y|+|z|)}[\mathcal R([y,z]_\circ),\alpha(x)]
+(-1)^{|z|(|x|+|y|)}[\mathcal R([z,x]_\circ),\alpha(y)]\\
=&[\mathcal R([\mathcal R(x),y]+[x,\mathcal R(y)]),\alpha(z)]
+(-1)^{|x|(|y|+|z|)}[\mathcal R([\mathcal R(y),z]+[y,\mathcal R(z)]),\alpha(x)]\\
&+(-1)^{|z|(|x|+|y|)}[\mathcal R([\mathcal R(z),x]+[z,\mathcal R(x)]),\alpha(y)].
\end{align*}
Hence,
\[
[[\mathcal R(x),\mathcal R(y)],\alpha(z)]
+(-1)^{|x|(|y|+|z|)}[[\mathcal R(y),\mathcal R(z)],\alpha(x)]
+(-1)^{|z|(|x|+|y|)}[[\mathcal R(z),\mathcal R(x)],\alpha(y)]=0,
\]
which shows that $\mathcal R$ satisfies condition \eqref{cond-strong-Rota-Baxter-Oper}.

\end{proof}

\section{Noncommutative Hom-pre-Poisson superalgebras}\label{Sec4}


In this section, we introduce the notions of noncommutative Hom-anti-pre-Poisson superalgebras. We explore the structural relationship between noncommutative Hom-Poisson superalgebras and noncommutative Hom-anti-pre-Poisson superalgebras through the framework of strong anti-super-$\mathcal O$-operators. Furthermore, we establish several related results and provide illustrative examples.
\begin{df}
A \textbf{noncommutative Hom-pre-Poisson superalgebra} is a quintuple $(A,\circ,\prec,\succ,\alpha)$ such that $(A,\circ,\alpha)$ is a Hom-pre-Lie superalgebra and $(A,\prec,\succ,\alpha)$ is a Hom-dendriform superalgebra satisfying the following compatibility conditions :
    \begin{align}
(x\circ y-(-1)^{|x||y|}y \circ x)\succ \alpha(z)&=\alpha(x)\circ(y\succ z)-(-1)^{|x||y|}\alpha(y)\succ(x \circ z),\label{cond-noncomm-Hom-pre-Poisson1}\\
\alpha(x)\prec(y\circ z -(-1)^{|y||z|} z \circ y)&=(-1)^{|x||y|} \alpha(y) \circ (x\prec z)-(-1)^{|x||y|}(y\circ x)\prec \alpha(z), \label{cond-noncomm-Hom-pre-Poisson2}\\
(x\succ y + x\prec y)\circ\alpha(z)&=(-1)^{|y||z|} (x\circ z)\prec \alpha(y)+\alpha(x) \succ(y\circ z).\label{cond-noncomm-Hom-pre-Poisson3}
\end{align}
\end{df}
\begin{rem}
When $\alpha=id$, we recover the noncommutative pre-Poisson superalgebras structures.    
\end{rem}
\begin{exa}\label{ex-noncom-H-pre-Pois}
 Let $\mathcal A=\mathfrak{osp}(1|2)
=\mathcal A_{\overline 0}\oplus\mathcal A_{\overline 1},
$
where $\mathcal A_{\overline 0}=\langle H,E,F\rangle$ and $\mathcal A_{\overline 1}=\langle X,Y\rangle
$.
Define a linear map
$\alpha:\mathcal A\to\mathcal A$ by
\[
\alpha(H)=H,\quad
\alpha(E)=\frac{1}{2} E,\quad
\alpha(F)=2F,\quad
\alpha(X)= X,\quad
\alpha(Y)=Y.
\]

\medskip

\noindent

Define a bilinear map $\circ:\mathcal A\times\mathcal A\to\mathcal A$
by the following nonzero products:
\[
\begin{aligned}
&H\circ E=2E,\qquad H\circ F=-2F,\qquad E\circ F=H,\\
&H\circ X=X,\qquad H\circ Y=-Y,\qquad E\circ Y=X,\qquad F\circ X=Y,\\
&X\circ X=2E,\qquad Y\circ Y=-2F,\qquad X\circ Y=H.
\end{aligned}
\]
and two bilinear maps $\prec,\succ:\mathcal A\times\mathcal A\to\mathcal A$ by:

\medskip
\noindent

\[
\begin{aligned}
&H\prec E=2E,\qquad E\prec F=H,\\
&H\prec X=X,\qquad E\prec Y=X,\\
&X\prec Y=H,\qquad X\prec X=2E.
\end{aligned}
\]

\medskip
\noindent
\[
\begin{aligned}
&E\succ H=-2E,\qquad F\succ E=-H,\\
&X\succ H=-X,\qquad Y\succ E=-X,\\
&Y\succ X=-H,\qquad Y\succ Y=2F.
\end{aligned}
\]
All other products are zero.
\medskip

\noindent
Define
\[
x\circ_\alpha y=\alpha(x\circ y),\qquad
x\prec_\alpha y=\alpha(x\prec y),\qquad
x\succ_\alpha y=\alpha(x\succ y),
\quad \forall x,y\in\mathcal A.
\]

\medskip

Then $(\mathfrak{osp}(1|2),\circ_\alpha,\prec_\alpha,\succ_\alpha,\alpha)$ is a non-commutative Hom-pre-Poisson superalgebra.   
\end{exa}

\begin{df}
A \textbf{noncommutative Hom-anti-pre-Poisson superalgebra} is a quintuple $(\mathcal A,\circ,\prec,\succ,\alpha)$ where $(\mathcal A,\circ,\alpha)$ is a Hom-anti-pre-Lie superalgebra and $(\mathcal A,\prec,\succ,\alpha)$ is a Hom-anti-dendriform superalgebra such that, the following conditions hold:
\begin{align}
 (x\circ y-(-1)^{|x||y|}y\circ x)\prec \alpha(z)&= -\alpha(x)\circ(y\prec z)+(-1)^{|x||y|}\alpha(y)\prec(x\circ z),\label{cond-ant-Hom-pre-poiss1}\\
 \alpha(x)\succ(y\circ z-(-1)^{|y||z|}z\circ y)&= -(-1)^{|x||y|}\alpha(y)\circ(x\succ z)+(-1)^{|x||y|}(y\circ x)\succ \alpha(z),\label{cond-ant-Hom-pre-poiss2}\\
 (x\prec y+x\succ y)\circ\alpha(z)&=-\alpha(x)\prec(y\circ z)-(-1)^{|y||z|}(x\circ z)\succ \alpha(y)\label{cond-ant-Hom-pre-poiss3}\\
 \alpha(x)\circ(y\prec z+y\succ z)&-(-1)^{|x|(|y|+|z|)}(y\prec z+y\succ z)\circ\alpha(x)-(-1)^{|x||y|}\alpha(y)\prec(x\circ z)\label{cond-ant-Hom-pre-poiss4}\\
 &-(x\circ y)\succ\alpha(z)=0.\nonumber
\end{align}
for all $x,y,z\in\mathcal{H}(\mathcal A)$.\\
A noncommutative Hom-anti-pre-Poisson superalgebra $(\mathcal A,\circ,\prec,\succ,\alpha)$ is called \textbf{multiplicative} if $(\mathcal A,\circ,\alpha)$ and $(\mathcal A,\prec,\succ,\alpha)$ are multiplicatives, and called \textbf{regular} if $\alpha$ is bijective.
\end{df}

The compatibility conditions defining noncommutative Hom-anti-pre-Poisson superalgebras can be viewed as a natural extension of Hom-type constructions using twisted $\mathcal{O}$-operators and Rota-Baxter operators (\cite{Mabrouk-Ncib-Silvestrov,Mabrouk-Silvestrov-Zouaidi}), ensuring a coherent interaction between the underlying Hom-anti-pre-Lie and Hom-anti-dendriform structures.
\begin{rem}
We recover noncommutative anti-pre-Poisson superalgebras structures when $\alpha=id$.     
\end{rem}


\begin{thm}\label{thm-twist-ant-pre-Poiss}
 Let $\alpha$ be an algebra homomorphism of a noncommutative anti-pre-Poisson superalgebra $(\mathcal A,\circ,\prec,\succ)$. Then, $(\mathcal A,\circ_\alpha,\prec_\alpha,\succ_\alpha,\alpha)$ is a multiplicative noncommutative Hom-anti-pre-Poisson superalgebra where $\prec_\alpha,\;\succ_\alpha$ and $\circ_\alpha$ are defined respectively by Eqs. \eqref{twist-anti-dendriform1}, \eqref{twist-anti-dendriform2} and \eqref{twist-anti-pre-Lie}.
\end{thm}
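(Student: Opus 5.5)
By the definition of a noncommutative Hom-anti-pre-Poisson superalgebra, we need three things: a Hom-anti-pre-Lie part, a Hom-anti-dendriform part, and the four compatibility conditions \eqref{cond-ant-Hom-pre-poiss1}--\eqref{cond-ant-Hom-pre-poiss4}. The first two are already available. Here $\alpha$ is an even endomorphism for all three products, i.e. $\alpha(x\circ y)=\alpha(x)\circ\alpha(y)$, $\alpha(x\prec y)=\alpha(x)\prec\alpha(y)$ and $\alpha(x\succ y)=\alpha(x)\succ\alpha(y)$. Proposition \ref{prop-twist-anti-pre-Lie} then gives that $(\mathcal A,\circ_\alpha,\alpha)$ is a multiplicative Hom-anti-pre-Lie superalgebra. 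Proposition \ref{twist-dendr} gives that $(\mathcal A,\prec_\alpha,\succ_\alpha,\alpha)$ is a multiplicative Hom-anti-dendriform superalgebra. Multiplicativity of the whole quintuple follows, since $\alpha(x\ast_\alpha y)=\alpha^2(x)\ast\alpha^2(y)=\alpha(x)\ast_\alpha\alpha(y)$ for $\ast\in\{\circ,\prec,\succ\}$.

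For the compatibility conditions I will use one uniform mechanism, the same one used in Theorem \ref{dend-twist-thm} and Proposition \ref{Prop-twis-Hom-ant-ass}. The key observation is that any term of the form $\alpha(x)\ast_\alpha(y\star_\alpha z)$ or $(x\ast_\alpha y)\star_\alpha\alpha(z)$, with $\ast,\star\in\{\circ,\prec,\succ\}$, equals $\alpha^2\big(x\ast(y\star z)\big)$ or $\alpha^2\big((x\ast y)\star z\big)$ respectively. For instance,
$$\alpha(x)\ast_\alpha(y\star_\alpha z)=\alpha^2(x)\ast\alpha(\alpha(y)\star\alpha(z))=\alpha^2(x)\ast(\alpha^2(y)\star\alpha^2(z))=\alpha^2\big(x\ast(y\star z)\big).$$
Super-commutators and the sums $x\prec_\alpha y+x\succ_\alpha y$ are linear combinations of such terms. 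Since $\alpha$ is even, the sign factors $(-1)^{|x||y|}$ are unchanged by the twisting, and every condition is handled the same way.

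Concretely, take \eqref{cond-ant-Hom-pre-poiss1}. Its left side $(x\circ_\alpha y-(-1)^{|x||y|}y\circ_\alpha x)\prec_\alpha\alpha(z)$ becomes $\alpha^2\big([x,y]_\circ\prec z\big)$. Its right side becomes $\alpha^2\big(-x\circ(y\prec z)+(-1)^{|x||y|}y\prec(x\circ z)\big)$. These are equal by the untwisted identity in $(\mathcal A,\circ,\prec,\succ)$. Conditions \eqref{cond-ant-Hom-pre-poiss2} and \eqref{cond-ant-Hom-pre-poiss3} go the same way term by term. For \eqref{cond-ant-Hom-pre-poiss4}, the whole left side is $\alpha^2$ applied to the untwisted left side, which vanishes.

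The only real care needed is bookkeeping, which is where I expect any difficulty: every term must be of one of the two "$\alpha$ on the outer argument" shapes. In \eqref{cond-ant-Hom-pre-poiss4}, for example, $(y\prec_\alpha z+y\succ_\alpha z)\circ_\alpha\alpha(x)$ has the shape $(y\ast_\alpha z)\star_\alpha\alpha(x)$ and so becomes $\alpha^2\big((y\cdot z)\circ x\big)$, while the $\alpha$ in each definition is consistently on the outer argument. Once this is checked, each condition reduces to $\alpha^2$ of the corresponding identity with $\alpha=id$, which holds by hypothesis.
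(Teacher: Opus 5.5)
Your proposal is correct and follows the paper's proof. You invoke Propositions \ref{twist-dendr} and \ref{prop-twist-anti-pre-Lie} for the Hom-anti-dendriform and Hom-anti-pre-Lie parts, then reduce each compatibility condition to $\alpha^2$ applied to the untwisted identity. Your explicit check that every term has the shape $\alpha(x)\ast_\alpha(y\star_\alpha z)$ or $(x\ast_\alpha y)\star_\alpha\alpha(z)$ is exactly what justifies the paper's ``by the same way'' for conditions \eqref{cond-ant-Hom-pre-poiss2}--\eqref{cond-ant-Hom-pre-poiss4}.
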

\begin{proof}
By Propositions \ref{twist-dendr} and \ref{prop-twist-anti-pre-Lie} we can see easily that $(\mathcal{A},\prec_\alpha,\succ_\alpha,\alpha)$ is a multiplicative Hom-anti-dendriform superalgebra and  $(\mathcal{A},\circ_\alpha,\alpha)$ is a multiplicative Hom-anti-pre-Lie superalgebra.\\
Let $x,y,z\in\mathcal H(\mathcal A)$. Then, by the fact that $(\mathcal A,\circ,\prec,\succ)$ is an anti-pre-Poisson superalgebra we have:
\begin{align*}
(x\circ_\alpha y-(-1)^{|x||y|}y\circ_\alpha x)\prec_\alpha \alpha(z)&=\alpha(\alpha(x)\circ \alpha(y)-(-1)^{|x||y|}\alpha(y)\circ \alpha(x))\prec \alpha^2(z)\\&=\alpha^2\big((x\circ y-(-1)^{|x||y|}y\circ x)\prec z)\big)\\&= \alpha^2\big(-x\circ(y\prec z)+(-1)^{|x||y|}y\prec(x\circ z)\big)\\&=-\alpha(x)\circ(y\prec z)+(-1)^{|x||y|}\alpha(y)\prec(x\circ z),   
\end{align*}
which implies that, condition \eqref{cond-ant-Hom-pre-poiss1} is satisfied. By the same way, we can show that, conditions \eqref{cond-ant-Hom-pre-poiss2}-\eqref{cond-ant-Hom-pre-poiss4} are satisfied. Then, $(\mathcal A,\circ_\alpha,\prec_\alpha,\succ_\alpha,\alpha)$ is a multiplicative Hom-anti-pre-Poisson superalgebra.
\end{proof}

\begin{prop}
Let $(\mathcal A, [\cdot,\cdot],\mu,\alpha)$ be a noncommutative Hom-Poisson superalgebra, $(\mathcal A,\circ,\alpha)$ be a compatible Hom-anti-pre-Lie superalgebra of $(\mathcal A,[\cdot,\cdot],\alpha)$ and $(\mathcal A,\prec,\succ,\alpha)$ a compatible Hom-anti-dendriform superalgebra of $(\mathcal A,\mu,\alpha)$. If $(\mathcal A,-\mathfrak L_\circ,-\mathfrak L_\prec,-\mathfrak R_\succ,\alpha)$ be a representation of $(\mathcal A,[\cdot,\cdot],\mu,\alpha)$, then $(\mathcal A,\prec,\succ,\circ,\alpha)$ is a Hom-anti-pre-Poisson superalgebra. Conversely, let $(\mathcal A,\circ,\prec,\succ,\alpha)$ be a Hom-anti-pre-Poisson superalgebra, $(\mathcal A,[\cdot,\cdot],\alpha)$ be the sub-adjacent Hom-Lie algebra of $(\mathcal A,\circ,\alpha)$ and $(\mathcal A,\mu,\alpha)$ be the sub-adjacent Hom-associative superalgebra of $(\mathcal A,\prec,\succ,\alpha)$. Then $(\mathcal A,[\cdot,\cdot],\mu,\alpha)$ is a Hom-Poisson superalgebra with a representation $(\mathcal A,-\mathfrak L_\circ,-\mathfrak L_\prec,-\mathfrak R_\succ,\alpha)$. In this case, we say $(\mathcal A,[\cdot,\cdot],\mu,\alpha)$ is the \textbf{sub-adjacent Hom-Poisson superalgebra} of $(\mathcal A,\prec,\succ,\circ,\alpha)$ and $(\mathcal A,\prec,\succ,\circ,\alpha)$ is a \textbf{compatible Hom-anti-pre-Poisson superalgebra} of $(\mathcal A,[\cdot,\cdot],\mu,\alpha)$.    
\end{prop}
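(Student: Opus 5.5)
The plan is a dictionary argument. Each compatibility condition of a noncommutative Hom-anti-pre-Poisson superalgebra will be matched with one representation axiom of Definition \ref{repr-noncom-Hom-Poiss}, evaluated on the triple $(\mathcal A,-\mathfrak L_\circ,-\mathfrak L_\prec,-\mathfrak R_\succ,\alpha)$. Throughout we use $[x,y]=[x,y]_\circ$ and $\mu(x,y)=x\cdot y=x\prec y+x\succ y$.

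\emph{The pieces that come for free.} The proposition characterizing Hom-anti-pre-Lie superalgebras (the equivalence $(1)\Leftrightarrow(3)$) says that $(\mathcal A,\circ,\alpha)$ is Hom-anti-pre-Lie if and only if it is Hom-Lie admissible and $(\mathcal A,-\mathfrak L_\circ,\alpha)$ is a representation of $\mathfrak g(\mathcal A)$. The theorem on Hom-anti-dendriform superalgebras gives that $(\mathcal A,\cdot,\alpha)$ is Hom-associative with representation $(\mathcal A,-\mathfrak L_\prec,-\mathfrak R_\succ,\alpha)$. So in both directions the only remaining work is:
\begin{itemize}
\item the three mixed identities \eqref{cond-rep-noncom-Hom-Pois1}--\eqref{cond-rep-noncom-Hom-Pois3} for $\rho=-\mathfrak L_\circ$, $\mathfrak l=-\mathfrak L_\prec$, $\mathfrak r=-\mathfrak R_\succ$;
\item in the converse, the Hom-Leibniz identity \eqref{Homleibnizident} for $[\cdot,\cdot]_\circ$ and $\cdot$.
\end{itemize}

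\emph{Translating the representation axioms.} Apply each mixed identity to $z$ and expand.
\begin{itemize}
\item \eqref{cond-rep-noncom-Hom-Pois1} becomes $-[x,y]\prec\alpha(z)=\alpha(x)\circ(y\prec z)-(-1)^{|x||y|}\alpha(y)\prec(x\circ z)$, which is exactly \eqref{cond-ant-Hom-pre-poiss1}.
\item \eqref{cond-rep-noncom-Hom-Pois2}, after inserting the Koszul sign $(-1)^{|z|(|x|+|y|)}$ coming from $\mathfrak R_\succ$ and renaming variables, becomes \eqref{cond-ant-Hom-pre-poiss2}.
\item \eqref{cond-rep-noncom-Hom-Pois3} becomes $-(x\cdot y)\circ\alpha(z)=\alpha(x)\prec(y\circ z)+(-1)^{|y||z|}(x\circ z)\succ\alpha(y)$, which is \eqref{cond-ant-Hom-pre-poiss3}.
\end{itemize}
These translations are reversible, so in either direction the representation axioms are equivalent to \eqref{cond-ant-Hom-pre-poiss1}--\eqref{cond-ant-Hom-pre-poiss3}. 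This step is routine sign bookkeeping.

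\emph{The Leibniz identity.} In the converse direction, expand
\[
[\alpha(x),y\cdot z]=\alpha(x)\circ(y\cdot z)-(-1)^{|x|(|y|+|z|)}(y\cdot z)\circ\alpha(x)
\]
and subtract $[x,y]\cdot\alpha(z)+(-1)^{|x||y|}\alpha(y)\cdot[x,z]$, splitting every $\cdot$ into $\prec+\succ$. The plan is to cancel terms in the following order:
\begin{itemize}
\item use \eqref{cond-ant-Hom-pre-poiss1} to rewrite $[x,y]\prec\alpha(z)$;
\item use \eqref{cond-ant-Hom-pre-poiss2} to rewrite $\alpha(y)\succ[x,z]$;
\item use \eqref{cond-ant-Hom-pre-poiss3}, with variables permuted, to rewrite $(y\cdot z)\circ\alpha(x)$.
\end{itemize}
What should remain is precisely the left-hand side of \eqref{cond-ant-Hom-pre-poiss4}, which vanishes by hypothesis. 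This is the main obstacle: the signs in the permuted copy of \eqref{cond-ant-Hom-pre-poiss3} must be tracked carefully, and the cancellation has to be verified term by term.

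\emph{The forward direction and what I would check first.} Here the hypothesis includes that $(\mathcal A,[\cdot,\cdot],\mu,\alpha)$ is already noncommutative Hom-Poisson, so \eqref{Homleibnizident} holds. Running the same computation backwards then yields \eqref{cond-ant-Hom-pre-poiss4}. The word ``compatible'' gives $[\cdot,\cdot]=[\cdot,\cdot]_\circ$ and $\mu=\cdot$, so the first two paragraphs supply \eqref{cond-ant-Hom-pre-poiss1}--\eqref{cond-ant-Hom-pre-poiss3}. If the residual in the Leibniz computation does not match \eqref{cond-ant-Hom-pre-poiss4} exactly, I would first recheck the rewriting of $(y\cdot z)\circ\alpha(x)$. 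That is the term where both the $\prec$ and $\succ$ halves appear with permuted arguments, so it is where a sign error is most likely.
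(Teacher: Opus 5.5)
Your route is the paper's own. You read off \eqref{cond-ant-Hom-pre-poiss1}--\eqref{cond-ant-Hom-pre-poiss3} from the mixed axioms \eqref{cond-rep-noncom-Hom-Pois1}--\eqref{cond-rep-noncom-Hom-Pois3} with $\rho=-\mathfrak L_\circ$, $\mathfrak l=-\mathfrak L_\prec$, $\mathfrak r=-\mathfrak R_\succ$, and you trade \eqref{cond-ant-Hom-pre-poiss4} against \eqref{Homleibnizident}. The problem is your prediction for the Leibniz step.

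Put $\Delta=[\alpha(x),y\cdot z]-[x,y]\cdot\alpha(z)-(-1)^{|x||y|}\alpha(y)\cdot[x,z]$, and let $\Theta$ be the left-hand side of \eqref{cond-ant-Hom-pre-poiss4}. Modulo \eqref{cond-ant-Hom-pre-poiss1}--\eqref{cond-ant-Hom-pre-poiss3}, one gets $\Delta=2\Theta$, not $\Theta$:
\begin{itemize}
\item Applying \eqref{cond-ant-Hom-pre-poiss1} at $(x,y,z)$ and \eqref{cond-ant-Hom-pre-poiss2} at $(y,x,z)$ turns $-[x,y]\prec\alpha(z)-(-1)^{|x||y|}\alpha(y)\succ[x,z]$ into a second copy of $\alpha(x)\circ(y\cdot z)$, plus $-(-1)^{|x||y|}\alpha(y)\prec(x\circ z)-(x\circ y)\succ\alpha(z)$.
\item Expanding $-[x,y]\succ\alpha(z)-(-1)^{|x||y|}\alpha(y)\prec[x,z]$ gives those two terms once more, plus the pair $(-1)^{|x||y|}(y\circ x)\succ\alpha(z)+(-1)^{|x|(|y|+|z|)}\alpha(y)\prec(z\circ x)$.
\item By \eqref{cond-ant-Hom-pre-poiss3} at $(y,z,x)$, that pair is a second copy of $-(-1)^{|x|(|y|+|z|)}(y\cdot z)\circ\alpha(x)$.
\end{itemize}
So \eqref{cond-ant-Hom-pre-poiss3} should be used to assemble this pair, not to eliminate $(y\cdot z)\circ\alpha(x)$. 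This is the factor $2$ in the paper's computation. The paper's last display also swaps $\prec$ and $\succ$ in its final two terms; your form of \eqref{cond-ant-Hom-pre-poiss4} is the correct one.

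Rechecking signs will not remove the $2$. If both $\Delta\equiv\Theta$ and $\Delta\equiv 2\Theta$ held, $\Theta$ would lie in the span of the instances of \eqref{cond-ant-Hom-pre-poiss1}--\eqref{cond-ant-Hom-pre-poiss3}. That already fails for $\alpha=\mathrm{id}$ and even free variables. Each instance of these three identities contains exactly one monomial of the form $a\circ(b\prec c)$, $a\circ(b\succ c)$, $(a\prec b)\circ c$ respectively. Reducing $\Theta$ by them leaves
$-(x\circ y)\prec z+(y\circ x)\prec z+y\prec(z\circ x)-y\succ(x\circ z)+y\succ(z\circ x)+(y\circ x)\succ z\neq 0$.

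The converse direction is unaffected. In the forward direction, however, the Hom-Leibniz identity only gives $2\Theta=0$, so you must divide by $2$. This is where characteristic $0$ is used.

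Separately, and equally unstated in the paper: the equivariance axioms \eqref{rAs1} and \eqref{repres-Hom-Lie1} for $-\mathfrak L_\circ,-\mathfrak L_\prec,-\mathfrak R_\succ$ with $\beta=\alpha$ say exactly that $\alpha$ is multiplicative for $\circ,\prec,\succ$. The results you invoke assume this silently. The representation claim in the converse therefore needs multiplicativity as an extra hypothesis.
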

\begin{proof} Let $x,y,z\in\mathcal{H}(\mathcal{A})$. By the fact that $(\mathcal A,-\mathfrak L_\circ,-\mathfrak L_\prec,-\mathfrak R_\succ,\alpha)$ is a representation of $(\mathcal A,[\cdot,\cdot],\mu,\alpha)$, we have:
\begin{align*}
&(x\circ y-(-1)^{|x||y|}y\circ x)\prec \alpha(z) +\alpha(x)\circ(y\prec z)-(-1)^{|x||y|}\alpha(y)\prec(x\circ z)\\=& 
\mathfrak L_\prec([x,y])\alpha(z)+\mathfrak L_\circ(\alpha(x))\mathfrak L_\prec(y)z-(-1)^{|x||y|}\mathfrak L_\prec(\alpha(y))\mathfrak L_\circ(x)z\\=&0.
\end{align*}

Similarly we have 
\begin{align*}
&\alpha(x)\succ(y\circ z-(-1)^{|y||z|}z\circ y)+\alpha(y)\circ(x\succ z)-(-1)^{|x||y|}(y\circ x)\succ \alpha(z)\\=&
\mathfrak R_\succ([y,z])\alpha(x)+\mathfrak L_\circ(\alpha(y))\mathfrak R_\succ(z)x-(-1)^{|x||y|}\mathfrak R_\succ(\alpha(z))\mathfrak L_\circ(y)x\\=&0,
\end{align*}
and
\begin{align*}
&(x\prec y+x\succ y)\circ\alpha(z)+\alpha(x)\prec(y\circ z)+(-1)^{|y||z|}(x\circ z)\succ \alpha(y)\\=&
\mathfrak L_\circ(\mu(x,y))\alpha(z)+\mathfrak L_\prec(\alpha(x))\mathfrak L_\circ(y)z+(-1)^{|y||z|}\mathfrak R_\succ(\alpha(y))\mathfrak L_\circ(x)z\\=&0.
\end{align*}
Then conditions \eqref{cond-ant-Hom-pre-poiss1}-\eqref{cond-ant-Hom-pre-poiss3} are satisfied.
\begin{align*}
0=&[\alpha(x),\mu(y,z)]+(-1)^{|x||y|}\mu([y,x],\alpha(z))+(-1)^{|x|(|y|
+|z|)}\mu(\alpha(y),[z,x])\\
=&\alpha(x)\circ(y\succ z+ y\prec z)-(-1)^{|x|(|y|+|z|)}(y\succ z-y\prec z)\circ \alpha(x)\\
&+(-1)^{|x||y|}(y\circ x-(-1)^{|x||y|}x\circ y)\succ \alpha(z)
+(-1)^{|x||y|}(y\circ x-(-1)^{|x||y|}x\circ y)\prec \alpha(z)\\
&+(-1)^{|x|(|y|+|z|)}\alpha(y)\succ(z\circ x-(-1)^{|x||z|}x\circ z)+(-1)^{|x|(|y|+|z|)}\alpha(y)\prec (z\circ x-(-1)^{|x||z|}x\circ z)\\
\overset{\eqref{cond-ant-Hom-pre-poiss1}-\eqref{cond-ant-Hom-pre-poiss3}}{=}&2\Big(\alpha(x)\circ(y\succ z+y\prec z)-(-1)^{|x|(|y|+|z|)}(y\succ z+y\prec z)\circ \alpha(x)-(-1)^{|x||y|}\alpha(y)\succ(x\circ z)\\&-(x\circ y)\prec \alpha(z)\Big),
\end{align*}
which gives condition \eqref{cond-ant-Hom-pre-poiss4}. The converse part is proved similarly.
\end{proof}
\begin{df}
 Let $(V,\rho,\mathfrak l,\mathfrak r,\beta)$ be a representation of a noncommutative Hom-Poisson superalgebra $(\mathcal A,[\cdot,\cdot],\mu,\alpha)$. An even linear map $T:V\to\mathcal A$ is called an \textbf{anti-super-$\mathcal O$-operator} on $(\mathcal A,[\cdot,\cdot],\mu,\alpha)$ if it satisfying both \eqref{cond-ant-O-oper-ncomm-Hom-ass2} and \eqref{cond-O-Oper-Hom-Lie2}.
 
 an anti-super-$\mathcal O$-operator $T$ on a Hom-Poisson superalgebra $(\mathcal A,[\cdot,\cdot],\mu,\alpha)$ is called \textbf{Strong} if:
 \begin{enumerate}
\item $T$ is strong on the Hom-Lie superalgebra $(\mathcal A,[\cdot,\cdot],\alpha)$.
\item $T$ is strong on the Hom-associative superalgebra $(\mathcal A,\mu,\alpha)$.
\item $T$ satisfying for any homogeneous elements $u,v,w$ of $V$ the following condition:
\begin{equation}\label{cond-strong-O-op-Hom-Pois-sup}
   \mathfrak l([T(u),T(v)])\beta(w)+(-1)^{|v||w|}\mathfrak r([T(u),T(w)])\beta(v)+(-1)^{|u|(|v|+|w|)}\rho(\mu(T(v),T(w)))\beta(u)=0. 
\end{equation}
 \end{enumerate}
 In particular, an anti-super-$\mathcal O$-operator $\mathcal R$ of $(\mathcal A,[\cdot,\cdot],\mu,\alpha)$ associated to the adjoint representation $(\mathcal A,\mathfrak {ad},\mathfrak L,\mathfrak R,\alpha)$ is called an \textbf{anti-Rota-Baxter operator (of weight zero)}, that is, $\mathcal R:\mathcal A\to\mathcal A$ is an even linear map satisfying conditions \eqref{ant-RB-oper-noncomm-ass} and \eqref{cond-Rota-Baxter-Oper}.
An anti-Rota-Baxter operator $\mathcal R$ is called \textbf{strong}, if it satisfies \eqref{strong-ant-RB-oper-noncomm-ass}, \eqref{cond-strong-Rota-Baxter-Oper} and the following condition.
\begin{equation}\label{cond-strong-R-B-Op-Hom-Poiss}
[[\mathcal R(x),\mathcal R(y)],\alpha(z)]+(-1)^{|y||z|}[[\mathcal R(x),\mathcal R(z)],\alpha(y)]+(-1)^{|x|(|y|+|w
z|)}[\mu(\mathcal R(y),\mathcal R(z)),\alpha(x)]=0,   
\end{equation}
for all $x,y,z\in\mathcal H(\mathcal A)$.
\end{df}
\begin{thm}
Let $T:V\rightarrow \mathcal{A}$ be an anti-super-$\mathcal{O}$-operator on a noncommutative Hom-Poisson-superalgebra $(\mathcal A,[\cdot,\cdot],\mu,\alpha)$ 
with respect to a representation $(V,\rho,\mathfrak l,\mathfrak r,\beta)$. Define the following binary operations $\circ_T,\prec_T,\succ_T:V\times V\rightarrow V$ defined by
\begin{align*}
u\circ_T v&=-\rho(T(u))v \\
u\prec_T v&=-\mathfrak l(T(u))v \\
u\succ_T v&=-(-1)^{|u||v|}\mathfrak r (T(v))u
\end{align*}
Then $(V,\circ_T,\prec_T,\succ_T,\beta)$ is a Hom-anti-Pre-Poisson superalgebra if and only if $T$ is strong.
\end{thm}
\begin{proof}
 Theorem \ref{Hom-ant-pre-Lie-by-ant-O-oper} and Proposition \ref{Hom-ant-dend-by-ant-O-oper} resectively gives that, $(V,\circ_T,\beta)$ is a Hom-anti-pre Lie superalgebra and $(V,\succ_{T},\prec_{T},\beta)$ is a Hom-anti-dendriform superalgerbra if and only if $T$ is strong.\\
 For any homogeneous elements $u,v,w$ of $V$, we have:
\begin{align*}
 &(u\circ_{T} v-(-1)^{|u||v|} v\circ_{T} u)\prec_{T} \beta(w)+\beta(u)\circ_{T}(v\prec_{T}w)-(-1)^{|u||v|}\beta(v)\prec_{T} (u\circ_{T}w)\\
 =&-\mathfrak l(T(-\rho(T(u))v+(-1)^{|u||v|}\rho(T(v))u))\beta(w)+\rho(T(\beta(u)) l(T(v))w-(-1)^{|u||v|}\mathfrak l(T(\beta(v)))\rho(T(u))w \\
 =&-\mathfrak l([T(u),T(v)])\beta(w)+\rho(\alpha (T(u)))\mathfrak  l(T(v))w-(-1)^{|u||v|}\mathfrak l(\alpha (T(v)))\rho(T(u))w\\
 =&0.
\end{align*}
The last equality found using Eq.\eqref{cond-rep-noncom-Hom-Pois1}.\\
Similarly, by using Eqs.\eqref{cond-rep-noncom-Hom-Pois2} and \eqref{cond-rep-noncom-Hom-Pois3}, we have: 
\begin{align*}
&\beta(u)\succ_{T}(v\circ_{T} w-(-1)^{|v||w|}w\circ_{T} v)+\beta(v)\circ_{T}(u\succ_{T} w)-(-1)^{|u||v|}(v\circ u)\succ_{T} \beta(w)\\&=-
\mathfrak r([T(v),T(w)])\beta(u)+\rho (\alpha(T(v)))\mathfrak r_{T}(T(w))u-(-1)^{|u||v|}\mathfrak r(\alpha(T(w)))\rho(T(v))u\\
&=0,
\end{align*}
and 

\begin{align*}
&(u\prec_{T}v+u\succ_{T}v)
\circ_{T} \beta(w)+\beta(u)\prec_{T}(v\circ_{T}w)+(-1)^{|v||w|}(u\circ_{T}w)\succ_{T} \beta(v)\\
=&-\rho(\mu(T(u),T(v))\beta(w)+\mathfrak l(\alpha(T(u)))\rho(T(v))w+(-1)^{|v||w|} \mathfrak r(\alpha(T(v))\rho(T(u))w \\
=&0.
\end{align*}

It remains to be shown that condition \eqref{cond-ant-Hom-pre-poiss4} is satisfied. Fo all $u,v,w\in\mathcal H(V)$, we have:
\begin{align*}
LHS=&\beta(u)\circ_T(v\prec_T w+v\succ_Tw)-(-1)^{|u|(|v|+|w|)}(v\prec_T w+v\succ_Tw)\circ_T\beta(u)-(-1)^{|u||v|}\beta(v)\prec_T(u\circ_Tw)\\&-(u\circ_Tv)\succ_Tw\\=&\rho(\alpha(T(u)))\mathfrak l(T(v))w+(-1)^{|v||w|}\rho(\alpha(T(u)))\mathfrak r(T(w))v+(-1)^{|u|(|v|+|w|)}\rho(\mu(T(v),T(w)))\beta(u)\\&-(-1)^{|u||v|}\mathfrak l(\alpha(T(v)))\rho(T(u))w-(-1)^{|w|(|u|+|v|)}\mathfrak r(\alpha(T(w)))\rho(T(u))v\\=&\Big(\rho(\alpha(T(u)))\mathfrak l(T(v))-(-1)^{|u||v|}\mathfrak l(\alpha(T(v)))\rho(T(u))\Big)w\\&+\Big((-1)^{|v||w|}\rho(\alpha(T(u)))\mathfrak r(T(w))-(-1)^{|w|(|u|+|v|)}\mathfrak r(\alpha(T(w)))\rho(T(u))\Big)v \\&+ (-1)^{|u|(|v|+|w|)}\rho(\mu(T(v),T(w)))\beta(u) \\=&\mathfrak l([T(u),T(v)])\beta(w)+(-1)^{|v||w|}\mathfrak r([T(u),T(w)])\beta(v)+(-1)^{|u|(|v|+|w|)}\rho(\mu(T(v),T(w)))\beta(u). 
\end{align*}
Then $LHS=0$ if and only if condition \eqref{cond-strong-O-op-Hom-Pois-sup} is satisfied which is, $T$ is strong. The proof is finished.
\end{proof}
\begin{cor}
Let $\mathcal R:\mathcal A\rightarrow \mathcal{A}$ be an anti-Rota-Baxter operator of weight zero on a noncommutative Hom-Poisson-superalgebra $(\mathcal A,[\cdot,\cdot],\mu,\alpha)$. Define the following binary operations $\circ_\mathcal{R},\prec_\mathcal{R},\succ_\mathcal{R}:\mathcal{A}\times\mathcal{A}\rightarrow \mathcal{A}$ defined for any $x,y\in\mathcal{H}(\mathcal A)$ by
\begin{align*}
x\circ_\mathcal{R} y&=-[\mathcal R(x),y] \\
x\prec_\mathcal{R} y&=-\mu(\mathcal R(x),y) \\
x\succ_\mathcal{R} y&=-\mu(x,\mathcal R(y)).
\end{align*}
Then $(\mathcal A,\circ_\mathcal{R},\prec_\mathcal{R},\succ_\mathcal{R},\alpha)$ is a Hom-anti-pre-Poisson superalgebra if and only if $\mathcal R$ is strong.    
\end{cor}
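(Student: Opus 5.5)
The plan is to derive the corollary directly from the preceding theorem. We specialize to $V=\mathcal A$, $\beta=\alpha$, and the regular representation $(\mathcal A,\mathfrak{ad},\mathfrak L,\mathfrak R,\alpha)$ of the noncommutative Hom-Poisson superalgebra $(\mathcal A,[\cdot,\cdot],\mu,\alpha)$. An anti-Rota-Baxter operator $\mathcal R$ commuting with $\alpha$ is by definition an anti-super-$\mathcal O$-operator with respect to this representation, so the theorem applies with $T=\mathcal R$.

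First I check that the products coincide. The theorem gives
\[
x\circ_T y=-\mathfrak{ad}(\mathcal R(x))y=-[\mathcal R(x),y],\qquad x\prec_T y=-\mathfrak L(\mathcal R(x))y=-\mu(\mathcal R(x),y),
\]
\[
x\succ_T y=-(-1)^{|x||y|}\mathfrak R(\mathcal R(y))x=-(-1)^{|x||y|}(-1)^{|x||y|}\mu(x,\mathcal R(y))=-\mu(x,\mathcal R(y)).
\]
Here I use $\mathfrak R(y)(x)=\mu(x,y)$ together with the sign convention built into \eqref{H-anti-dend-from-O-oper-H-ass2}. These are exactly $\circ_\mathcal R,\prec_\mathcal R,\succ_\mathcal R$. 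The theorem therefore says that $(\mathcal A,\circ_\mathcal R,\prec_\mathcal R,\succ_\mathcal R,\alpha)$ is a Hom-anti-pre-Poisson superalgebra if and only if $\mathcal R$ is strong as an anti-super-$\mathcal O$-operator.

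It remains to translate the three strongness conditions for $T=\mathcal R$ into \eqref{strong-ant-RB-oper-noncomm-ass}, \eqref{cond-strong-Rota-Baxter-Oper} and \eqref{cond-strong-R-B-Op-Hom-Poiss}.
\begin{itemize}
\item[(i)] Substituting $\rho=\mathfrak{ad}$, $\beta=\alpha$ into \eqref{cond-strong-O-Oper} gives \eqref{cond-strong-Rota-Baxter-Oper} verbatim.
\item[(ii)] Substituting $\mathfrak l=\mathfrak L$, $\mathfrak r=\mathfrak R$ into \eqref{strong-ant-O-oper-ncomm-Hom-ass} gives
\[
\mu(\mu(\mathcal R x,\mathcal R y),\alpha z)=(-1)^{|x|(|y|+|z|)}(-1)^{|x|(|y|+|z|)}\mu(\alpha x,\mu(\mathcal R y,\mathcal R z)),
\]
and the signs cancel to give \eqref{strong-ant-RB-oper-noncomm-ass}.
\item[(iii)] Evaluating \eqref{cond-strong-O-op-Hom-Pois-sup} gives $\mu([\mathcal R x,\mathcal R y],\alpha z)+(-1)^{|y||z|}(-1)^{|y|(|x|+|z|)}\mu(\alpha y,[\mathcal R x,\mathcal R z])+(-1)^{|x|(|y|+|z|)}[\mu(\mathcal R y,\mathcal R z),\alpha x]$.
\end{itemize}

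The main obstacle is (iii). This expression must be matched with \eqref{cond-strong-R-B-Op-Hom-Poiss}, whose printed form (brackets $[[\cdot,\cdot],\cdot]$ in the first two terms) appears to be a typo for the $\mu$-products. I would state the condition in the form obtained from the representation, and read \eqref{cond-strong-R-B-Op-Hom-Poiss} accordingly. The sign bookkeeping there needs care, using super-skew-symmetry and $\mathfrak R(a)b=(-1)^{|a||b|}\mu(b,a)$.

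Once (i)–(iii) are identified, both directions of the equivalence follow immediately from the theorem.
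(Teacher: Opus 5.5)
Your proposal is correct and is exactly the intended argument. The paper gives no separate proof: the corollary is the specialization of the preceding theorem to $V=\mathcal A$, $T=\mathcal R$, $\beta=\alpha$ and the regular representation $(\mathcal A,\mathfrak{ad},\mathfrak L,\mathfrak R,\alpha)$. Your identification of the three strongness conditions is right, including reading the misprinted \eqref{cond-strong-R-B-Op-Hom-Poiss} as $\mu([\mathcal R(x),\mathcal R(y)],\alpha(z))+(-1)^{|x||y|}\mu(\alpha(y),[\mathcal R(x),\mathcal R(z)])+(-1)^{|x|(|y|+|z|)}[\mu(\mathcal R(y),\mathcal R(z)),\alpha(x)]=0$. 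This works provided you consistently use the signed convention $\mathfrak R(a)b=(-1)^{|a||b|}\mu(b,a)$ forced by \eqref{rAs3}, rather than the unsigned formula you quote in your first computation.

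The only thing worth adding concerns the regular representation when $\alpha$ is not multiplicative: it then violates \eqref{rAs1} and \eqref{repres-Hom-Lie1}. However, the proofs of the theorem and of Theorem \ref{Hom-ant-pre-Lie-by-ant-O-oper} and Proposition \ref{Hom-ant-dend-by-ant-O-oper} use only $\mathcal R\alpha=\alpha\mathcal R$, \eqref{rAs2}--\eqref{rAs4}, \eqref{repres-Hom-Lie2} and \eqref{cond-rep-noncom-Hom-Pois1}--\eqref{cond-rep-noncom-Hom-Pois3}. These hold for $(\mathcal A,\mathfrak{ad},\mathfrak L,\mathfrak R,\alpha)$ by the Hom-associative, Hom-Lie and Hom-Leibniz axioms alone, so your specialization is legitimate without assuming multiplicativity.
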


\end{document}